\newcounter{enunciato}[section]
\newtheorem{ittheorem}{Theorem}
\newtheorem{itlemma}{Lemma}
\newtheorem{itproposition}{Proposition}
\newtheorem{itdefinition}{Definition}
\newtheorem{itremark}{Remark}
\newtheorem{itconjecture}{Conjecture}
\newtheorem{itcorollary}{Corollary}
\newtheorem{xx}{\bf xxx}
\newtheorem{zz}{\bf zzz}
\newenvironment{theorem}{\addtocounter{enunciato}{1}
 
\begin{ittheorem}}{\end{ittheorem}}
\newenvironment{lemma}{\addtocounter{enunciato}{1}
\begin{itlemma}}{\end{itlemma}}
\newenvironment{proposition}{\addtocounter{enunciato}{1}
\begin{itproposition}}{\end{itproposition}}
\newenvironment{definition}{\addtocounter{enunciato}{1}
\begin{itdefinition}}{\end{itdefinition}}
\newenvironment{remark}{\addtocounter{enunciato}{1}
\begin{itremark}}{\end{itremark}}
\newenvironment{conjecture}{\addtocounter{enunciato}{1}
\begin{itconjecture}}{\end{itconjecture}}
\newenvironment{corollary}{\addtocounter{enunciato}{1}
\begin{itcorollary}}{\end{itcorollary}}
\newcommand{\be}[1]{\begin{equation}\label{#1}}
\newcommand{\ee}{\end{equation}}
\newcommand{\bea}[1]{\begin{eqnarray}\label{#1}}
\newcommand{\eea}{\end{eqnarray}}
\newcommand{\bt}[1]{\begin{theorem}\label{#1}}
\newcommand{\et}{\end{theorem}}
\newcommand{\bl}[1]{\begin{lemma}\label{#1}}
\newcommand{\el}{\end{lemma}}
\newcommand{\bp}[1]{\begin{proposition}\label{#1}}
\newcommand{\ep}{\end{proposition}}
\newcommand{\bd}[1]{\begin{definition}\label{#1}}
\newcommand{\ed}{\end{definition}}
\newcommand{\br}[1]{\begin{remark}\label{#1}}
\newcommand{\er}{\end{remark}}
\newcommand{\bcj}[1]{\begin{conjecture}\label{#1}}
\newcommand{\ecj}{\end{conjecture}}
\newcommand{\bcor}[1]{\begin{corollary}\label{#1}}
\newcommand{\ecor}{\end{corollary}}
\newcommand{\bpr}{\begin{proof}}
\newcommand{\epr}{\end{proof}}
\newcommand{\T}{\mathbb{T}}
\newcommand{\wt}{\widetilde}
\newcommand{\dd}{\mathrm{d}}
\newcommand{\eee}{\mathrm{e}}
\newcommand{\bP}{\mathbb{P}}
\newcommand{\bE}{\mathbb{E}}
\DeclareMathOperator{\var}{Var}
\newcommand{\suml}{\sum\limits}
\def\CB{\mathcal{B}}
\def\CC{\mathcal{C}}
\def\CM{\mathcal{M}}
\def\CE{\mathcal{E}}
\def\CF{\mathcal{F}}
\def\CL{\mathcal{L}}
\def\CP{\mathcal{P}}
\def\B{\mathbb{B}}
\def\E{\mathbb{E}}
\def\N{\mathbb{N}}
\def\R{\mathbb{R}}
\def\Z{\mathbb{Z}}
\newcommand{\capa}{\mathrm{cap}}
\newcommand{\Ntwo}{\N \backslash \{1\}}
\newcommand{\uc}{\underline{c}}
\newcommand{\ux}{\underline{x}}
\newcommand{\ud}{\underline{d}}
\newcommand{\uL}{\underline{\Lambda}}
\newcommand{\ul}{\underline{\lambda}}
\begin{document}

\title{The hierarchical Cannings process in random environment}

\author{A. Greven$^1$, F. den Hollander$^2$,  A. Klimovsky$^3$}

\date{{\today}}

\maketitle

\begin{abstract}
In an earlier paper, we introduced and studied a system of hierarchically interacting measure-valued 
random processes that arises as the continuum limit of a large population of individuals carrying 
different types. Individuals live in colonies labelled by the hierarchical group of order $N$, and are 
subject to \emph{migration} and \emph{resampling} on all hierarchical scales simultaneously. The 
resampling mechanism is such that a random positive fraction of the population in a block of colonies 
inherits the type of a random single individual in that block, which is why we refer to our system as 
the hierarchical Cannings process. Before resampling in a block takes place, all individuals in that 
block are relocated uniformly, which we call \emph{reshuffling}.

In the present paper, we study a version of the hierarchical Cannings process in \emph{random 
environment}, namely, the resampling measures controlling the change of type of individuals in 
different blocks are chosen randomly with a given mean and are kept fixed in time, i.e., we work 
in the \emph{quenched} setting. We give a necessary and sufficient condition under which a 
multi-type equilibrium is approached (= coexistence) as opposed to a mono-type equilibrium 
(= clustering). Moreover, in the hierarchical mean-field limit $N \to \infty$, with the help of a 
\emph{renormalization analysis} we obtain a full picture of the space-time scaling behaviour of 
block averages on all hierarchical scales simultaneously. We show that the $k$-block averages 
are distributed as the superposition of a Fleming-Viot diffusion with a deterministic volatility 
constant $d_k$ and a Cannings process with a random jump rate, both depending on $k$. In 
the random environment $d_k$ turns out to be smaller than in the homogeneous environment 
of the same mean. We investigate how $d_k$ scales with $k$. This leads to five \emph{universality 
classes of cluster formation} in the mono-type regime. We find that if clustering occurs, then the 
random environment slows down the growth of the clusters, i.e., enhances the diversity of types. 
In some universality classes the growth of the clusters depends on the realisation of the random 
environment.

\medskip\noindent
\emph{Keywords:} 
Hierarchical Cannings process, random environment, migration, block reshuffling, block resampling, 
block coalescence, hierarchical mean field limit, random M\"obius transformations.

\medskip\noindent
\emph{MSC 2010:} 
Primary 60J25, 60K35; Secondary 60G57, 60J60, 60J75, 82C28, 92D25.

\medskip\noindent 
\emph{Acknowledgements:} 
AG was supported by the Deutsche Forschungsgemeinschaft  (grant DFG-GR 876/15-2), 
FdH was supported by the European Research Council (Advanced Grant VARIS-267356) 
and by the Netherlands Organization for Scientific Research (Gravitation Grant 
NETWORKS-024.002.003), AK was supported by the Netherlands Organization for 
Scientific Research (grant 613.000.913). The authors are grateful to Evgeny Verbitskiy 
for help with the renormalization analysis. 
\end{abstract}

\vspace{0.3cm}

\footnoterule
\noindent
\hspace*{0.3cm} {\footnotesize$^{1)}$
Department Mathematik, Universit\"at Erlangen-N\"urnberg, Cauerstrasse 11, 
D-91058 Erlangen, Germany\\
greven@mi.uni-erlangen.de}\\
\hspace*{0.3cm} {\footnotesize$^{2)}$
Mathematisch Instituut, Universiteit Leiden, P.O.\ Box 9512, NL-2300RA  Leiden, 
The Netherlands\\
denholla@math.leidenuniv.nl}\\
\hspace*{0.3cm} {\footnotesize$^{3)}$
Fakult\"at f\"ur Mathematik, Universit\"at Duisburg-Essen, Thea-Leymann-Strasse 9, 
D-45127 Essen, Germany\\
ak@aklimovsky.net}

\tableofcontents

\section{Introduction}
\label{s.intro}

\subsection{Motivation and goal}
\label{ss.motivation}

Two models play a central role in the world of stochastic multi-type population dynamics:
\begin{itemize} 
\item[(1)] 
The Moran model and its limit for large populations, the Fleming-Viot measure-valued 
\emph{diffusion}.
\item[(2)] 
The Cannings model and its limit for large populations, the Cannings measure-valued 
\emph{jump process} (also called the generalized Fleming-Viot process).
\end{itemize} 
The Cannings model accounts for situations in which \emph{resampling} is such that a 
random positive fraction of the population in the next generation inherits the type of a 
random single individual in the current generation, even in the infinite population limit 
(see Cannings~\cite{C74}, \cite{C75}). In order to describe a setting where this effect 
has a geographical structure, i.e., where \emph{migration} of individuals is allowed as 
well, different models have been proposed in Limic and Sturm~\cite{LS06}, Blath, 
Etheridge and Meredith~\cite{BEM07}, Barton, Etheridge and V\'eber~\cite{BEV10}, 
Berestycki, Etheridge and V\'eber~\cite{BEV13}, and Greven, den Hollander, Kliem and 
Klimovsky~\cite{GHKK14}. The behaviour of these models has been studied in detail 
and its dependence on the geographic space is fairly well understood. 

The type space is typically chosen to be a compact Polish space $E$. In~\cite{GHKK14}, we focused 
on the case where the geographic space is the hierarchical group $\Omega_N$  of order $N$, since 
this allowed us to carry out a full \emph{renormalization analysis}. In the \emph{hierarchical mean-field 
limit} $N \to \infty$, the migration can be chosen in such a way that it approximates migration on the 
geographic space $\Z^2$, a possibility that was exploited by Sawyer and Felsenstein~\cite{SF83} 
(see also Dawson, Gorostiza and Wakolbinger~\cite{DGW04}).

We analyze the model introduced in \cite{GHKK14}, but add the effect that the Cannings resampling 
mechanism is controlled by \emph{catastrophic events} on a small time scale, for which it is appropriate 
to assume that the rate of occurrence has a spatially inhomogeneous structure. This leads us to 
consider spatial Cannings models with block resampling in \emph{random environment}, i.e., both 
the form and the overall rate of the block resampling mechanism depend on the geographic location. 

\begin{remark}\label{r.325}
{\rm In a catastrophic event, a part of the population is killed in a large spatial area and is subsequently 
replenished via a rapid recolonization, resulting in a bottleneck effect consisting of compression and 
subsequent expansion of the descendants of a single ancestor. The mechanisms behind such events 
are functions of the background environment, which is inhomogeneous in space but constant in time. 
It would be interesting to derive our continuum model (defined in Section~\ref{ss.random}) from an 
individual-based model with \textit{two time scales}: the catastrophic events happen on a fast time 
scale, while the migration and resampling happen on a slow time scale. Moreover, in our individual-based 
model we do \emph{reshuffling} before resampling, which must be motivated likewise. Carrying out 
the details of such a derivation would merit a paper in its own right.} \hfill $\square$
\end{remark}

The {\em goal} of the present paper is three-fold: 
\begin{itemize}
\item[(1)]
\emph{Construction} of the hierarchical Cannings process in random environment via a well-posed 
martingale problem and derivation of a \emph{duality relation} with a hierarchical spatial coalescent 
in random environment.
\item[(2)]
Analysis of the longtime behaviour, in particular, the \emph{dichotomy} between a multi-type equilibrium 
and a mono-type equilibrium.
\item[(3)]
Scaling analysis of a collection of r\emph{enormalized processes} obtained by looking at the evolution 
of blocks averages on successive space-time scales in the hierarchical mean-field limit and the 
consequences for universality classes of the mono-type cluster formation.
\end{itemize}
We are particularly interested in \emph{new effects caused by the random environment}.

The mathematical tools we will exploit are the duality of the hierarchical Canning process in random 
environment with a hierarchical spatial coalescent in random environment, and the scaling of the block 
averages towards a mean-field process in random environment called the McKean-Vlasov process. 
This in turn will lead us to study two independent hierarchical random walks in the same random 
environment, and to analyze the orbit of iterations of non-linear transformations arising from random 
M\"obius transformations that link the behaviour on successive hierarchical scales.

\subsection{Summary of the main results}
\label{ss.summary}

In an earlier paper, we introduced and studied a system of hierarchically interacting measure-valued 
random processes that arises as the continuum limit of a large population of individuals subject to 
migration, reshuffling and resampling \cite{GHKK14}. More precisely, individuals live in colonies labelled 
by $\Omega_N$, the hierarchical group of order $N$, and are subject to \emph{migration} based on a 
sequence of migration coefficients $\uc=(c_k)_{k\in\N_0}$ and to \emph{resampling} based on a 
sequence of resampling measures $\uL= (\Lambda_k)_{k\in\N_0}$, both acting on blocks of colonies 
(= macro-colonies) on all hierarchical scales $k\in\N_0$ simultaneously. The resampling mechanism 
is such that a random positive fraction of the population in a block of colonies inherits the type of a 
random single individual in that block, even in the infinite population limit, which is why we refer to 
our system as the hierarchical Cannings process. Before resampling in a block takes place, all individuals 
in that block are relocated uniformly. This relocation is called \emph{reshuffling} and means that 
resampling is done in a locally ``panmictic'' manner.

In the present paper, we study a version of the hierarchical Cannings process in \emph{random 
environment}, namely, the resampling measures in different blocks are chosen randomly with 
mean $\uL$ and are kept fixed in time, i.e., we consider the \emph{quenched} version of the 
system. We construct the hierarchical Cannings process in random environment via a well-posed 
martingale problem, and establish duality with a system of coalescing hierarchical random walks 
with block coalescence in random environment. We study the long-time behaviour of the process, 
in particular, we give a necessary and sufficient condition on $\uc$ and $\uL$ under which 
almost sure convergence to a \emph{multi-type equilibrium} occurs (= coexistence), as opposed 
to a \emph{mono-type equilibrium} (= clustering). The equilibrium depends on the environment, 
but the condition on $\uc$ and $\uL$ for its occurrence does not. 

To obtain more detailed information on the evolution of the system, we consider the \emph{hierarchical 
mean-field limit} $N \to \infty$. In this limit, with the help of a \emph{renormalization analysis}, we 
obtain a full picture of the space-time scaling behaviour on all hierarchical scales simultaneously. 
Our main result is that, on each hierarchical scale $k \in \N_0$, the $k$-block averages on time 
scale $N^k$ converge to a random process that is a superposition of a Cannings process with a 
resampling measure equal to the associated $k$-block resampling measure (which depends on 
the environment) and an additional Fleming-Viot process with volatility $d_k$, reflecting the 
macroscopic impact of the lower-order resampling and of the drift of strength $c_k$ towards 
the limiting $(k+1)$-block average (which is constant on the limiting time scale). It turns out that 
$d_k$ is a function of $c_l$ and $\Lambda_l$ for all $0 \leq l < k$, and of the law of the random 
environment. Thus, \emph{it is through the volatility that the renormalization manifests itself}. 

We show that \emph{the random environment makes the system less volatile}, i.e., $d_k$ is strictly 
smaller than its corresponding value for the homogenous system where the resampling measures 
are replaced by their mean. We investigate how $d_k$ scales as $k \to \infty$, which leads to 
\emph{various different cases} depending on the choice of $\uc$ and $\uL$. We find that if migration 
and resampling occur with comparable rates on all hierarchical scales, then the lower volatility 
persists in the limit as $k\to\infty$. The renormalization transformation connecting $d_{k+1}$ to 
$d_k$ turns out to be a non-linear transformation arising from a \emph{random M\"obius transformation}. 
The scaling behaviour of the iterates of these transformations is studied in detail. We find that if 
clustering occurs, then the random environment slows down the growth of the clusters, i.e., 
enhances the diversity of types. We find five \emph{universality classes of cluster formation} in 
the regime of clustering. These are linked to the different cases of scaling behaviour of $d_k$.
We find that if the growth of the clusters is \emph{rapid}, then the rate of growth depends on 
the realisation of the environment, while if the growth is \emph{slow}, then the effect of the 
environment averages out. The latter happens e.g.\ in the \emph{critical regime} where the 
system is barely clustering. 

\subsection{Outline}
\label{ss.outline}

Sections~\ref{s.model}--\ref{s.randomwalk} deal with the model for finite $N$, while 
Sections~\ref{s.mkvrand}--\ref{s.clustering} deal with the hierarchical mean-field limit 
$N\to\infty$. In Section~\ref{s.model} we define the hierarchical Cannings process and 
its dual. In Section~\ref{s.results} we state our main theorems and summarize the effects 
of the random environment. Section~\ref{s.dual} contains the proof of existence and 
uniqueness of the hierarchical Cannings process and its dual, and establishes convergence 
to an equilibrium. Section~\ref{s.randomwalk} proves the dichotomy between coexistence 
(multi-type equilibrium) versus clustering (mono-type equilibrium), and provides the parameter 
range for both. Section~\ref{s.mkvrand} contains the multi-scale analysis for the evolution 
of block averages on successive space-time scales in the hierarchical mean-field limit, proves 
the dichotomy in that limit, and identifies the renormalization transformations connecting the 
successive scales. Section~\ref{s.completeproof} analyzes the orbit of the iterations of 
these transformations and identifies various different cases for the scaling of the volatility of 
the block averages. Section~\ref{s.clustering} links these cases to the universality classes 
of cluster formation.

\section{The model}
\label{s.model}

In this section, we define the hierarchical Cannings process in random environment and construct 
its dual: a spatial coalescent in random environment. We begin in Section~\ref{ss.reviewC} by 
recalling the process without random environment introduced in~\cite{GHKK14}. In 
Section~\ref{ss.random} we explain how the random environment is added. 

\subsection{The hierarchical Cannings process}
\label{ss.reviewC}

In Sections~\ref{sss.hg}--\ref{sss.resh}, we recall the definition of the hierarchical Canning 
process given in ~\cite{GHKK14}. In Section~\ref{sss.hierarCan} we add the random environment 
and indicate how the definition needs to be modified.

\subsubsection{The hierarchical group of order $N$}
\label{sss.hg}

The {\em hierarchical group $\Omega_N$} of order $N$ is the set
\be{ag30_a}
\Omega_N = \Big\{\eta=(\eta^l)_{l\in\N_0} \in\{0,1,\ldots, N-1\}^{\N_0}
\colon\, \sum_{l\in\N_0} \eta^l < \infty\Big\}, \qquad N\in\N\backslash\{1\},
\ee
endowed with the addition operation $+$ defined by $(\eta+\zeta)^l=\eta^l+\zeta^l \textrm{ (mod $N$)}$, 
$l\in\N_0$. In other words, $\Omega_N$ is the direct sum of the cyclical group of order $N$ (a fact 
that is important for the application of Fourier analysis). The group $\Omega_N$ is equipped with 
the ultrametric distance $d_{\Omega_N}(\cdot,\cdot)$ defined by
\be{ag31}
d_{\Omega_N}(\eta,\zeta)=d_{\Omega_N}(0,\eta-\zeta)
= \min\{k\in\N_0 \colon\, \eta^l=\zeta^l \,\, \forall\, l \geq k\},
\qquad \eta,\zeta\in\Omega_N.
\ee
Let
\be{block-definition} 
B_k(\eta) = \{\zeta\in\Omega_N\colon\, d_{\Omega_N}(\eta,\zeta) \leq k\},
\qquad \eta\in\Omega_N,\,k\in\N_0
\ee
denote the $k$-block around $\eta$ (i.e., the ball of hierarchical radius $k$ around $\eta$), which 
we think of as a {\em macro-colony}. The geometry of $\Omega_N$ is explained in Fig.~\ref{fig-hierargr}.

\begin{figure}[htbp]
\centering
\includegraphics[width=\textwidth]{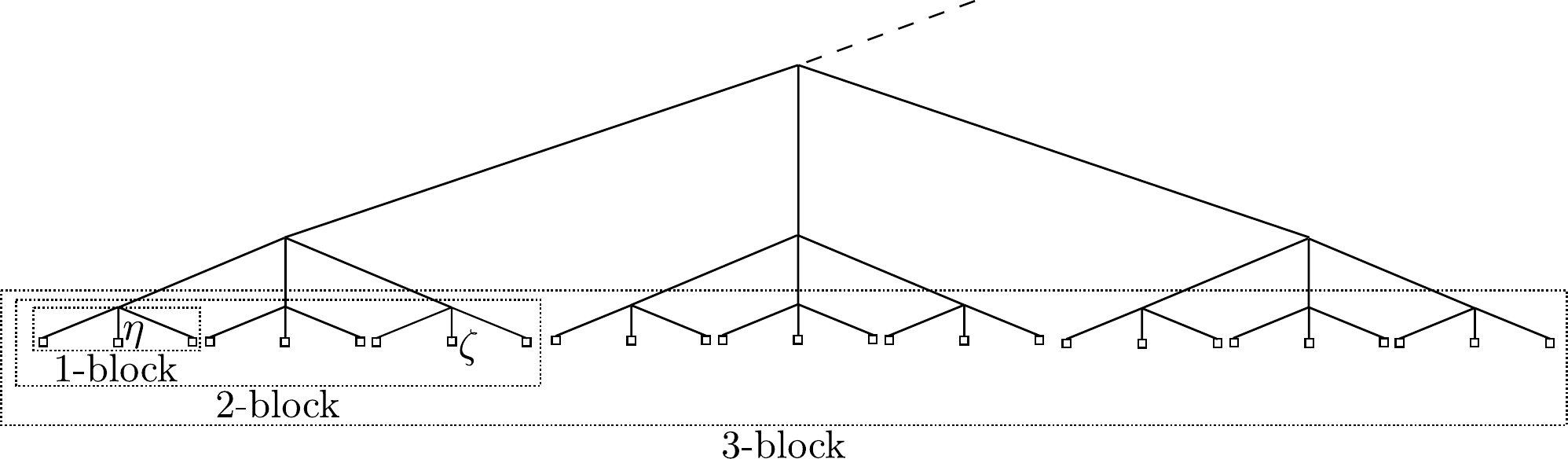}
\caption{\small Close-ups of a 1-block, a 2-block and a 3-block in the hierarchical group of order $N=3$. 
The elements of the group are the leaves of the tree (indicated by $\Box$'s). The hierarchical distance 
between two elements is the graph distance to the most recent common ancestor: $d_{\Omega_3}
(\eta,\zeta) = 2$ for $\eta$ and $\zeta$ in the picture.}
\label{fig-hierargr}
\end{figure}

In what follows, we consider a system of individuals organized in colonies labelled by $\Omega_N$. 
Initially each colony has $M$ individuals, each carrying a type drawn from a Polish type space $E$ 
that is compact. Subsequently, individuals are subject to block migration (Section~\ref{sss.mg}) 
and block reshuffling-resampling (Section~\ref{sss.resh}). In the continuum-mass limit $M\to\infty$, 
the evolution converges to the hierarchical Cannings process (Section~\ref{sss.hierarCan}).

\subsubsection{Block migration}
\label{sss.mg}

We introduce migration on $\Omega_N$ through a random walk kernel. 
For that purpose, we introduce a sequence of migration rates
\be{ckdef}
\uc = (c_k)_{k\in\N_0} \in (0,\infty)^{\N_0},
\ee
and we let the individuals \emph{migrate} as follows:
\begin{itemize}
\item
Each individual, for every $k\in\N$, chooses at rate $c_{k-1}/N^{k-1}$ the block of radius $k$ around 
its present location and jumps to a location chosen uniformly at random in that block.
\end{itemize}
The transition kernel of the random walk thus performed by the individuals is
\be{32b}
a^{(N)}(\eta,\zeta) = \sum_{k \geq d_{\Omega_N}(\eta,\zeta)} \frac{c_{k-1}}{N^{2k-1}},
\qquad \eta,\zeta\in\Omega_N,\,\eta\neq\zeta, \qquad a^{(N)}(\eta,\eta)=0.
\ee

\begin{remark}
\label{degreeretr}
{\rm The behaviour of the random walk in \eqref{32b} is known in great detail. Dawson, Gorostiza 
and Wakolbinger~\cite{DGW05} showed that it is recurrent if and only if $\sum_{k\in\N_0} (1/c_k) 
= \infty$. They introduced the concept of \emph{degree of recurrence/transience} $\gamma_N$
\cite[Definition 2.1.1]{DGW05}, which in the special case where $c_k=c^k$ equals $\gamma(N)
= \log c/\log(N/c)$. Note that
\begin{equation}
\gamma(N) \left\{\begin{array}{ll}
<0, &\quad c<1 \text{ (strongly recurrent)},\\ 
=0, &\quad c=1 \text{ (critically recurrent)},\\ 
>0, &\quad c>1 \text{ (transient)}.
\end{array}
\right.
\end{equation}
This is the same as for simple random walk on $\Z^d$ with Hausdorff dimension $d=d(N)=(2\log N)/
\log(N/c)$ (when we allow for a continuum of dimensions). In particular, $d=d(N)=2$ for $c=1$.}
\end{remark}

Throughout the paper, we assume that
\be{ak:recurrence-cond}
\limsup_{k\to\infty} \tfrac{1}{k} \log c_k < \log N.
\ee
This guarantees that the total migration rate per individual is finite.

\subsubsection{Block reshuffling-resampling}
\label{sss.resh}

The idea of the Cannings resampling mechanism is to allow reproduction with an offspring that 
is of a size comparable to the whole population. Since we have introduced a spatial structure, 
we now allow, on all hierarchical levels $k$ simultaneously, a reproduction event where each 
individual treats the $k$-block around its present location as a \emph{macro-colony} and uses 
it for its resampling. More precisely, we choose a sequence of resampling measures 
\be{ag41}
\uL = \big(\Lambda_k)_{k\in\N_0} \in \CM_f([0,1])^{\N_0},
\ee
where $\CM_f([0,1])$ denotes the set of finite non-negative measures on $[0,1]$, satisfying
\be{ag}
\Lambda_0(\{0\})=0, \qquad \int_{(0,1]} \frac{\Lambda_0(\dd r)}{r} = \infty,
\ee
and
\begin{equation}
\label{ak1000}
\Lambda_k(\{0\})=0, \qquad \int_{(0,1]} \frac{\Lambda_k(\dd r)}{r^2}  < \infty. \qquad k \in \N,
\end{equation}

Let $\Lambda^*_k(\dd r) = \Lambda_k(\dd r)/r^2$, $r \in (0,1]$. Set
\be{lambda-total-masses}
\lambda_k = \Lambda_k((0,1]), \qquad \lambda^*_k = \Lambda^*_k((0,1]), \qquad k\in\N_0,
\ee
and assume that
\be{lambdakdef}
\ul = (\lambda_k)_{k\in\N_0} \in (0,\infty)^{\N_0}.
\ee
We let individuals \emph{reshuffle-resample} by carrying out the following two steps at once:
\begin{itemize}
\item
For every $\eta\in\Omega_N$ and $k\in\N_0$, choose the block $B_k(\eta)$ at rate $1/N^{2k}$.
\item
First, each individual in $B_k(\eta)$ independently is moved to a uniformly random location in 
$B_k(\eta)$, i.e., a reshuffling takes place (see Fig.~\ref{fig-reshuffle}). After that, $r$ is drawn 
according to the intensity measure $\Lambda^*_k$ and $a$ is drawn according to the current 
type distribution in $B_k(\eta)$, and each of the individuals in $B_k(\eta)$ independently is 
replaced by an individual of type $a$ with probability $r$.
\end{itemize}
Note that the reshuffling-resampling affects all the individuals in a macro-colony simultaneously 
and in the same manner. The reshuffling-resampling occurs at all levels $k\in\N_0$, at a rate that 
is fastest in single colonies and gets slower as the level $k$ of the macro-colony increases.
\footnote{Because the reshuffling is done first, the resampling always acts on a uniformly 
distributed state (``panmictic resampling''). Reshuffling is a parallel update affecting all individuals 
in a macro-colony simultaneously. Therefore it cannot be seen as a migration of individuals equipped 
with independent clocks.}

\begin{figure}[htbp]
\label{fig:reshuffling} 
\centering
\vspace{0.2cm}
\includegraphics[width=\textwidth]{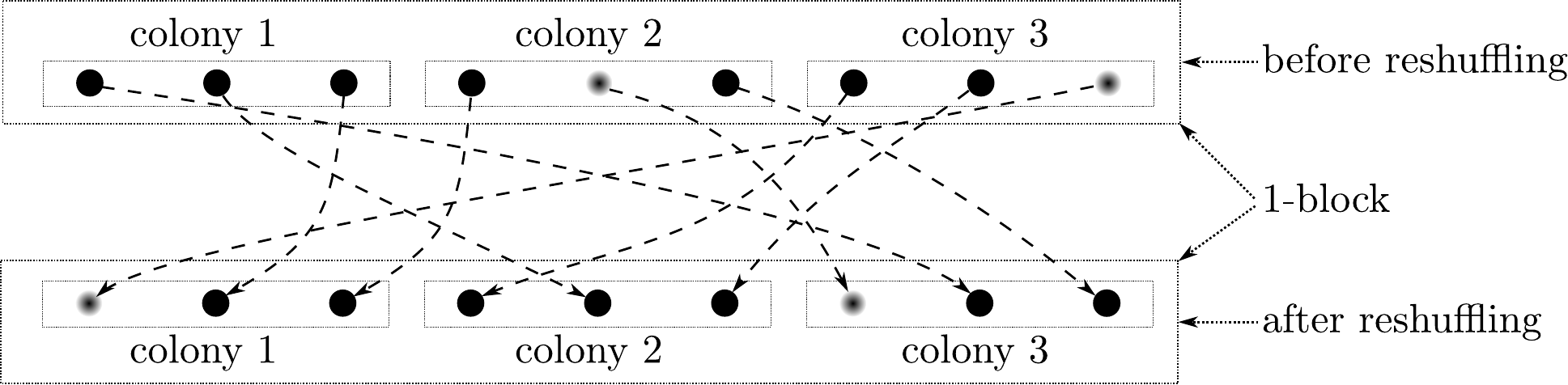} 
\caption{\small
Random reshuffling in a 1-block on the hierarchical lattice of order $N=3$, with $M=3$ individuals 
of two types (full circles and fuzzy circles) per colony. \emph{Note}: Typically a random reshuffling 
does not preserve the number of individuals per colony, but in the example drawn here it does.}
\label{fig-reshuffle}
\end{figure}

The first conditions in (\ref{ag}) and (\ref{ak1000}) make the resampling a \emph{jump process}. 
Later we will add in diffusion by hand. The second condition in (\ref{ag}) guarantees that the population 
has a well-defined genealogy and that after a positive finite time most of the population at a site 
descends from a finite number of ancestors (see Pitman~\cite{P99}). The second condition in 
(\ref{ak1000}) is needed to guarantee that in finite time a macro-colony is affected by finitely many 
reshuffling-resampling events, otherwise the resampling cannot be properly defined. 

Throughout the paper, we assume that
\be{ak:lambda-growth-condition}
\limsup_{k\to\infty} \tfrac{1}{k}\,\log \lambda^*_k < \log N.
\ee
Note that each of the $N^k$ colonies in a $k$-block can trigger reshuffling-resampling in that block, 
and for each colony the block is chosen at rate $N^{-2k}$. Therefore, \eqref{ak:lambda-growth-condition} 
guarantees that the total resampling rate per individual is bounded.

\subsubsection{The generator and the martingale problem}
\label{sss.hierarCan}

We are now ready to formally define the hierarchical Cannings process in terms of a martingale 
problem. The process arises as the continuum-mass limit of the individual-based model described 
in Sections~\ref{sss.hg}--\ref{sss.resh}. Namely, in each colony of size $M$, instead of recording 
the \emph{numbers} of individuals of a given type we record the \emph{empirical distribution} of the 
types and pass to the limit $M\to\infty$.

Let $\CP(E)$ denote the set of probability measure on $E$ equipped with the topology of weak 
convergence. We equip the set $\CP(E)^{\Omega_N}$ with the product topology to get a state 
space that is Polish. Let $\mathcal{F} \subset C_\mathrm{b} \big(\CP(E)^{\Omega_N},\R\big)$ be 
the algebra of functions of the form
\be{ak:multi-level-test-functions}
\begin{aligned}
&F(x) = \int_{E^n} \left(\bigotimes_{m=1}^n x_{\eta_m}\big(\dd u^m\big)\right)
f\big(u^1,\ldots,u^n\big),
\quad x = (x_{\eta})_{\eta\in\Omega_N}\in\CP(E)^{\Omega_N},\\
&n \in \N, \quad f \in C_{\mathrm{b}}(E^n,\R), 
\quad \eta_1,\ldots,\eta_n \in \Omega_N.
\end{aligned}
\ee
The linear operator for the martingale problem
\be{ak:multi-level-generator}
L^{(\Omega_N)}\colon\,\mathcal{F} \to C_\mathrm{b}\big(\CP(E)^{\Omega_N},\R\big)
\ee
has two parts,
\be{ak:multi-level-generator-decomposition}
L^{(\Omega_N)}=L^{(\Omega_N)}_{\mathrm{mig}}+L^{(\Omega_N)}_{\mathrm{res}}.
\ee
The \emph{migration operator} is given by
\be{ak:multi-level-migration}
(L^{(\Omega_N)}_{\mathrm{mig}} F)(x)
= \sum_{\eta,\zeta \in \Omega_N} a^{(N)}(\eta,\zeta)
\int_E (x_{\zeta} - x_{\eta})(\dd a)\,\frac{\partial F(x)}{\partial x_\eta}[\delta_a]
\ee
and the \emph{reshuffling-resampling operator} by
\be{ak:multi-level-resampling-global}
\begin{aligned}
(L^{(\Omega_N)}_{\mathrm{res}} F)(x)
&= \sum_{\eta \in \Omega_N} \sum_{k\in\N_0} N^{-2k} \int_{(0,1]} \Lambda^*_k (\dd r) 
\int_E y_{\eta,k}(\dd a) \left[F\left(\Phi_{r,a,B_k(\eta)}(x)\right)-F(x)\right]\\
&\qquad + \sum_{\eta \in \Omega_N} (L^{d_0}_{\eta} F)(x),
\end{aligned}
\ee
where 
\be{ak:blockaverage}
y_{\eta,k} = N^{-k} \sum_{\zeta \in B_k(\eta)} x_\zeta 
\ee
is the $k$-block average of the components of $x$ in $B_k(\eta)$, $\Phi_{r,a,B_k(\eta)} \colon\,
\CP(E)^{\Omega_N}\to\CP(E)^{\Omega_N}$ is the \emph{reshuffling-resampling map} acting as
\be{ak:resampling-mapping-global}
\Big[\big(\Phi_{r,a,B_k(\eta)}\big)(x)\Big]_\zeta =
\begin{cases}
(1-r) y_{\eta,k} + r \delta_a, &\zeta \in B_k(\eta),\\
x_{\zeta}, &\zeta \in \Omega_N \backslash B_k(\eta),
\end{cases}
\quad r \in [0,1],\, a \in E,\, k\in\N_0,\,\eta\in\Omega_N,
\ee
and $L^{d_0}_{\eta}$ is the \emph{Fleming-Viot diffusion operator} with volatility $d_0 \geq 0$, 
acting on the colony $x_\eta$, given by
\be{ak:FlemingViot}
(L^{d_0}_{\eta}F)(x) = d_0 \int_E \int_E Q_{x_\eta}(\dd u,\dd v)\,
\frac{\partial^2 F(x)}{\partial x_\eta^2}[\delta_u,\delta_v]
\ee
with
\be{ak:flemming-viot-kernel}
Q_y(\dd u,\dd v) = y(\dd u)\,\delta_u(\dd v) - y(\dd u)\,y(\dd v), \qquad y \in \CP(E),
\ee
the Fleming-Viot diffusion coefficient, and
\be{ak-second-variation}
\frac{\partial^2 F(x)}{\partial x_\eta^2 }[\delta_u,\delta_v]
= \frac{\partial}{\partial x_\eta} \left( \frac{\partial F(x)}{\partial x_\eta} [\delta_u] \right)[\delta_v],
\quad u,v \in E.
\ee

\begin{remark}\label{r.711}
{\rm Note that the right-hand side of \eqref{ak:multi-level-resampling-global} is well-defined 
because of assumption \eqref{ak1000}. Indeed, by Taylor-expanding the inner integral in 
\eqref{ak:multi-level-resampling-global} in powers of $r$, we get 
\be{ak:9999}
\int_E y_{\eta,k}(\dd a)
\left[F\left(\Phi_{r,a,B_k(\eta)}(x)\right)-F(x)\right]
=  F(y_{\eta,k})-F(x)+O(r^2),
\qquad r \downarrow 0.
\ee
To have a well-defined resampling operator \eqref{ak:multi-level-resampling-global}, the expression 
in \eqref{ak:9999} must be integrable with respect to $\Lambda^*_k (\dd r)$, which is equivalent to 
assumption~\eqref{ak1000}.} \hfill $\square$
\end{remark}

\medskip
The following proposition was proved in \cite{GHKK14}.

\begin{proposition}[{\bf Hierarchical martingale problem}]
\label{P.vecLambda}
\mbox{}\\
For every $x \in \CP(E)^{\Omega_N}$, the martingale problem for $(L^{(\Omega_N)}, \CF,\delta_x)$ 
is well-posed. \footnote{As part of the definition of the martingale problem, we always require that 
the solution has c\`adl\`ag paths and is adapted to the natural filtration.} The unique solution is a 
strong Markov process with the Feller property. \hfill $\square$
\end{proposition}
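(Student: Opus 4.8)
\textbf{Proof strategy for Proposition~\ref{P.vecLambda}.}

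The plan is to treat this as a standard well-posedness problem for a measure-valued Markov process and to proceed in the classical order: existence via approximation, uniqueness via duality, and then the strong Markov and Feller properties as consequences. Since the statement is quoted from~\cite{GHKK14}, the proof here should really be a guided reduction to that reference, but let me lay out the architecture.

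\emph{Existence.} First I would establish existence of a solution to the martingale problem. The natural route is to truncate the generator: for $K \in \N$, replace $L^{(\Omega_N)}_{\mathrm{res}}$ by the operator $L^{(\Omega_N),K}_{\mathrm{res}}$ in which (i) only hierarchical levels $k \le K$ contribute and (ii) the measures $\Lambda^*_k$ are replaced by $\Lambda^*_k|_{(1/K,1]}$, so that only finitely many jump mechanisms of bounded rate remain. For each $K$ the truncated generator $L^{(\Omega_N)}_{\mathrm{mig}} + L^{(\Omega_N),K}_{\mathrm{res}}$ is a bounded perturbation of the migration-plus-Fleming--Viot generator (recall migration has finite total rate by~\eqref{ak:recurrence-cond}), so the corresponding martingale problem is well-posed by standard theory for interacting Fleming--Viot-type systems; equivalently one writes down the associated SDE/Poissonian construction directly. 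Then I would show tightness of the laws $(\bP^K)_{K\in\N}$ on path space: the compactness of $E$ makes the marginals automatically tight, and the Aldous/Joffe--M\'etivier criterion for the oscillations follows from uniform control of the quadratic variation of $t\mapsto F(X_t)$, which in turn is controlled by $\sum_k N^{-2k}\lambda^*_k < \infty$ (guaranteed by~\eqref{ak:lambda-growth-condition}) together with the Taylor estimate~\eqref{ak:9999} from Remark~\ref{r.711}. Passing to a subsequential limit and checking that the martingale property survives the limit (using~\eqref{ak:9999} to dominate the tail contributions of the small jumps, and the fact that the test functions $F \in \CF$ depend on only finitely many coordinates) yields a solution.

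\emph{Uniqueness.} This is where the real content lies, and I expect it to be the main obstacle. The standard tool for Cannings/Fleming--Viot-type martingale problems is a \emph{duality} with a coalescent — indeed the excerpt announces precisely such a duality with a hierarchical spatial coalescent with block coalescence. The plan is: (a) define the dual particle system $(\mathcal{C}_t)_{t\ge0}$, a Markov process taking values in finite subsets of $\Omega_N \times \{\text{partition blocks}\}$, in which particles perform the migration random walk with kernel $a^{(N)}$, and in which at rate $N^{-2k}$ associated to each $k$-block all particles currently in that block are first relocated (the dual of reshuffling) and then, with the $\Lambda^*_k$-governed mechanism, a random subset coalesces into one (the dual of resampling); the Fleming--Viot part $d_0$ contributes the usual binary coalescence. (b) Write down the duality function $H(x,\mathcal{C}) = \int_{E^{|\mathcal{C}|}} \big(\bigotimes \ldots\big) f(\ldots)$ pairing a state $x$ with a configuration $\mathcal{C}$ of particles carrying a test function $f$, exactly in the spirit of~\eqref{ak:multi-level-test-functions}. (c) Verify the generator identity $(L^{(\Omega_N)} H(\cdot,\mathcal{C}))(x) = (G H(x,\cdot))(\mathcal{C})$, where $G$ is the generator of the dual; this is a computation matching the migration term against random-walk motion, the $\Lambda^*_k$ term against the multiple-merger dynamics (here~\eqref{ak:resampling-mapping-global} is designed so that the reshuffling-then-resampling map dualizes cleanly), and $L^{d_0}$ against binary coalescence. (d) Check that the dual is non-explosive — each $k$-block is hit at finite rate and there are finitely many particles initially, and~\eqref{ak1000}/\eqref{ak:lambda-growth-condition} prevent accumulation of coalescence events — so that the duality relation $\bE_x[H(X_t,\mathcal{C})] = \bE_{\mathcal{C}}[H(x,\mathcal{C}_t)]$ holds for all $t$. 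Since functions of the form $H(\cdot,\mathcal{C})$ are measure-determining on $\CP(E)^{\Omega_N}$ (again using compactness of $E$), the one-dimensional distributions of any solution are pinned down, and by the Markov property so are all finite-dimensional distributions; hence uniqueness. The delicate points are the non-explosion of the dual in the presence of infinitely many hierarchical levels and the verification that the reshuffling map is genuinely dual to a relocation of \emph{all} dual particles in a block (not an independent motion of each), which is exactly the subtlety flagged in the footnote after Section~\ref{sss.resh}.

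\emph{Strong Markov and Feller properties.} Once well-posedness is in hand these are routine: uniqueness of the martingale problem for every initial point $\delta_x$ gives the Markov property, and together with the c\`adl\`ag path requirement it upgrades to the strong Markov property by the standard argument (Ethier--Kurtz). For the Feller property I would use the duality once more: $x \mapsto \bE_x[H(X_t,\mathcal{C})] = \bE_{\mathcal{C}}[H(x,\mathcal{C}_t)]$ is continuous in $x$ for fixed $t$ and $\mathcal{C}$ by dominated convergence (the integrand is bounded and continuous in $x$, with $|\mathcal{C}_t|$ finite a.s.), and since such functions generate the topology this gives weak-Feller continuity of the semigroup; right-continuity at $t=0$ is immediate from the c\`adl\`ag property. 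Again, the honest thing in the present paper is to observe that all of this was carried out in~\cite{GHKK14} and simply cite it, since the random environment is not yet present at this stage.
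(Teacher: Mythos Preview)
Your proposal is correct in spirit and, more importantly, correctly identifies that the paper gives no proof of this proposition at all: it is simply quoted as a result already established in~\cite{GHKK14}, with the sentence ``The following proposition was proved in \cite{GHKK14}'' immediately preceding it. Your outlined architecture (existence via truncation and tightness, uniqueness via duality with a spatial $\Lambda$-coalescent with block reshuffling, Feller and strong Markov from the duality representation) is precisely the scheme used in~\cite{GHKK14} and mirrored in Section~\ref{s.dual} of the present paper for the random-environment version; indeed, Section~\ref{ss.dualproofsthms} explicitly says the argument ``follows the strategy for the two-type case given in Evans~\cite[Theorem 4.1]{E97}, which says that for spatial coalescent processes well-posedness and existence carry over from the dual process to the original process.'' So your final sentence is the right answer: at this point in the paper one simply cites~\cite{GHKK14}.
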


\noindent
The Markov process arising as the solution of this martingale problem is denoted by
\be{XOmegaNdef}
X^{(\Omega_N)}=(X^{(\Omega_N)}(t))_{t\geq 0},
\ee 
and is referred to as the $C_N^{\uc,\uL}$-process on $\Omega_N$. Proposition~\ref{P.vecLambda} 
does not actually need the second condition in \eqref{ag}.

This condition will be needed only later.
 
\subsection{The hierarchical Cannings process in random environment}
\label{ss.random}

Our task in this section is to modify the first term in the right-hand side of 
\eqref{ak:multi-level-resampling-global} so as to include the effect of a random environment on 
the Cannings resampling mechanism. Section~\ref{sss.RE} defines the random environment, 
Section~\ref{sss.mp} the modified generator.

\subsubsection{The random environment on the full tree}
\label{sss.RE}

Recall that $\Omega_N$ is the set of \textit{leaves} of the tree in Fig.~\ref{fig-hierargr}. 
To introduce the random environment, we need to consider the \emph{full tree}, i.e.,
\be{fulltreedef}
\Omega^{\mathbb{T}}_N = \bigcup_{k\in\N_0} \Omega^{(k)}_N
\quad \text{ with } \quad \Omega^{(k)}_N = \Omega_N / B_k(0),
\ee
where $\Omega_N/B_k(0)$ denotes the quotient group of $\Omega_N$ modulo $B_k(0)$, which 
can be identified with the layer of the tree situated at height $k$ above the leaves. Indeed, because 
$d_{\Omega_N}$ is an ultrametric distance (recall \eqref{ag31}), for each $k\in\N_0$ the set 
$\Omega_N$ decomposes into disjoint balls of radius $k$, which can be labelled by the set 
$\Omega^{(k)}_N$. For $\xi \in \Omega^{\mathbb{T}}_N$, we write 
\be{heightdef}
|\xi| = \text{ the height of $\xi$ (counting from the leaves)},
\ee 
i.e., $|\xi| = k$ when $\xi \in \Omega^{(k)}_N$ for $k\in\N_0$, and we define
\be{Bkequivdef}
B_{|\xi|}(\xi)
\ee
to be the set of sites in $\Omega_N$ that lie below $\xi$ (see Fig.~\ref{fig-hierartree}). We can 
define the distance on the layer $\Omega^{(k)}_N$ as the graph distance to the most recent common 
ancestor, and the distance on the full tree $\Omega^{\mathbb{T}}_N$ as the \emph{largest} of the 
two graph distances to the most recent common ancestor (recall Fig.~\ref{fig-hierargr}).
The latter will be denoted by $d_{\Omega^{\mathbb{T}}_N}$. We write
\begin{equation}\label{e786}
\mathrm{MC}_k(\eta)
\end{equation}
to denote the {\em vertex in $\Omega^{\mathbb{T}}_N$ at height $k \in \N_0$ above $\eta \in \Omega_N$}. 
This site carries the rate for the random walk on $\Omega_N$ to become uniformly distributed on the 
$k-$ball around $\eta$. Since $\Omega^{\mathbb{T}}_N$ is isomorphic to $\Omega_N \times \N_0$, 
we sometimes write $\mathrm{MC}_k(\eta) = \xi = (\eta,k)$.

\begin{figure}[htbp]
\vspace{0.3cm}
\centering \includegraphics[width=\textwidth]{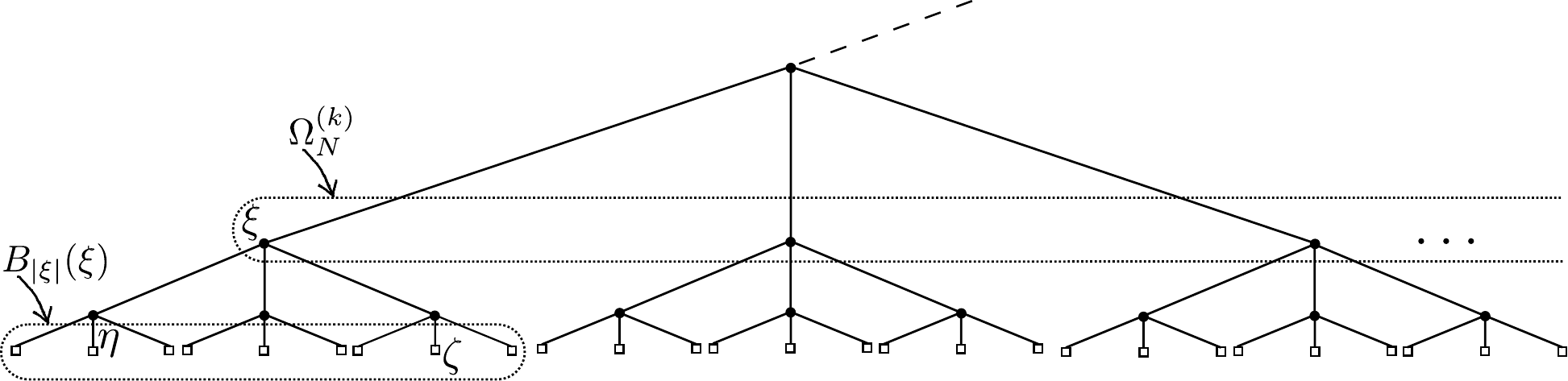} 
\caption{\small $\Omega^{\mathbb{T}}_N $ with $N=3$, $\xi \in \Omega^{\mathbb{T}}_N$
with $|\xi| = k = 2$, and $\eta, \zeta \in B_{|\xi|}(\xi)$. The elements of $\Omega^{\mathbb{T}}_N$ are 
the vertices of the tree (indicated by $\bullet$'s and $\Box$'s). The elements of $\Omega_N$ are the 
leaves of the tree (indicated by $\Box$'s).} 
\label{fig-hierartree}
\end{figure}
\medskip

We want to make the reshuffling-resampling \emph{spatially random}. To that end, we let
\be{ap1}
\underline{\Lambda}(\omega) = \big\{\Lambda^\xi(\omega)  \colon\, 
\xi \in \Omega^{\mathbb{T}}_N\big\}  
\ee 
be a random field of $\mathcal{M}_f([0,1])$-valued resampling measures indexed by the tree.
\begin{itemize}
\item
Throughout the paper, we \emph{use the symbol $\omega$ to denote the random environment and 
the symbol $\bP$ to denote the law of $\omega$}. 
\end{itemize}
In what follows, we assume that $\Lambda^\xi(\omega)$ is of the form
\be{ap2}
\Lambda^\xi(\omega) = \lambda_{|\xi|} \chi^\xi(\omega), 
\ee
where $\ul = (\lambda_k)_{k\in\N_0}$ is a deterministic sequence in $(0,\infty)$ (playing the role of 
modulation coefficients) and
\be{chiprop}
\{\chi^\xi(\omega)\colon\,\xi\in\Omega_N^{\mathbb{T}}\}
\ee
is a random field of $\mathcal{M}_f([0,1])$-valued resampling measures that is \emph{stationary under 
translations} in $\Omega^{\mathbb{T}}_N$ (i.e., translations sideways and up in Fig.~\ref{fig-hierartree}), 
and satisfies the conditions in \eqref{ag} when $\xi=0$ and the conditions in \eqref{ak1000} when 
$\xi\neq 0$. 

Abbreviate
\be{fdh:masschi}
\rho^\xi(\omega) = \chi^\xi(\omega)((0,1]),
\ee
which is the total mass of  $\chi^\xi(\omega)$. Clearly,
\be{rhoseq}
\{\rho^\xi(\omega)\colon\,\xi\in\Omega_N^{\mathbb{T}}\}
\ee
is a random field of $(0,\infty)$-valued total masses that is also stationary under translations in 
$\Omega^{\mathbb{T}}_N$. Throughout the paper, we assume that
\be{Aprop}
\bE[\rho^\xi(\omega)]=1, \qquad \bE[(\rho^\xi(\omega))^2] = C \in (0,\infty),
\ee
and that the sigma-algebra at infinity associated with \eqref{fdh:masschi}, defined by
\be{fdh:tail}
\mathcal{T} = \bigcap_{L \in \N_0} \mathcal{F}_L, \qquad
\mathcal{F}_L = \sigma\Big(\rho^\xi(\cdot) ~\Big|~
\xi \in \Omega^{\mathbb{T}}_N\colon\,d_{\Omega^{\mathbb{T}}_N}(0,\xi) \geq L\Big),
\ee
is trivial, i.e., all its events have probability 0 or 1 under the law $\bP$. For one of the theorems 
below we need to strengthen \eqref{Aprop} to
\be{Apropalt}
\bE[\rho^\xi(\omega)]=1, \qquad 
\exists\,\delta>0\colon\,\,\delta \leq \rho^\xi(\omega) \leq \delta^{-1}
\,\,\forall\,\xi\in\Omega_N \text{ for } \bP\text{-a.e. } \omega.
\ee

\subsubsection{The generator in random environment}
\label{sss.mp}

Throughout the sequel, we use the symbols $\eta,\zeta$ to denote elements of $\Omega_N$ and 
the symbol $\xi$ to denote elements of $\Omega^{\mathbb{T}}_N$. 

In random environment, we keep the definitions in Section~\ref{sss.hierarCan}, but we replace 
the \emph{reshuffling-resampling operator} $L^{(\Omega_N)}_{\mathrm{res}}$ in 
\eqref{ak:multi-level-resampling-global} by
\be{ak:multi-level-resampling-global-RE}
\begin{aligned}
(L^{(\Omega_N)}_{\mathrm{res}}(\omega) F)(x)
&= \sum_{\xi \in \Omega^\mathbb{T}_N} N^{-2|\xi|}
\int_{(0,1]} (\Lambda^\xi(\omega))^*(\dd r) \int_E y_{\xi}(\dd a)
\left[F\left(\Phi_{r,a,B_{|\xi|}(\xi)}(x)\right)-F(x)\right]\\
&\qquad + \sum_{\eta \in \Omega_N} (L_\eta^{d_0}F)(x)
\end{aligned}
\ee
with $(\Lambda^\xi(\omega))^*(\dd r) = \Lambda^\xi(\omega)(\dd r)/r^2$, $r \in (0,1]$, where 
$y_{\xi} \in \CP(E)$ is given by
\be{def-blocks-RE}
y_{\xi}=N^{-|\xi|} \sum_{\zeta\in B_{|\xi|}(\xi)} x_\zeta, \quad \xi \in \Omega^\mathbb{T}_N,
\ee
and $\Phi_{r,a,B_{|\xi|}(\xi)}\colon\,\CP(E)^{\Omega_N}\to\CP(E)^{\Omega_N}$ is the 
\emph{reshuffling-resampling map} acting as
\be{ak:resampling-mapping-global-RE}
\Big[\big(\Phi_{r,a,B_{|\xi|}(\xi)}\big)(x)\Big]_\zeta =
\begin{cases}
(1-r) y_{\xi} + r \delta_a, &\zeta \in B_{|\xi|}(\xi),\\
x_{\zeta}, &\zeta \in \Omega_N \backslash B_{|\xi|}(\xi),
\end{cases}
\qquad r \in [0,1],\, a \in E,\,\xi\in\Omega^\mathbb{T}_N.
\ee
The difference between \eqref{ak:multi-level-resampling-global} and 
\eqref{ak:multi-level-resampling-global-RE} is that the resampling in blocks occurs according to 
the resampling measure associated with the center of the block, labelled by $\Omega^\mathbb{T}_N$. 
The full generator is
\be{ak:resmig}
L^{(\Omega_N)}(\omega) = L^{(\Omega_N)}_{\mathrm{mig}} 
+ L^{(\Omega_N)}_{\mathrm{res}}(\omega)
\ee
with $L^{(\Omega_N)}_{\mathrm{mig}}$ the migration operator in \eqref{ak:multi-level-migration}.

\section{Main theorems}
\label{s.results}

In Section~\ref{ss.resN} we present results for fixed $N$, in Section~\ref{ss.hiermfl} for 
$N \to \infty$, the hierarchical mean-field limit. In Section~\ref{ss.effectre} we summarize the 
effects of the random environment. Throughout the paper, the environment $\omega$ is fixed 
and we use the symbol $\CL[W]$ to denote the law of a random variable $W$.

\subsection{Results for fixed $N$}
\label{ss.resN}

Section~\ref{sss.wposmart} establishes the well-posedness of the martingale problem, 
Section~\ref{sss.coexist} the convergence to an equilibrium that depends on $\omega$.

\subsubsection{Well-posedness of the martingale problem}
\label{sss.wposmart}

We begin by establishing that the martingale problem characterizes the process uniquely and 
specifies a strong Markov process.

\begin{theorem}[{\bf Well-posedness of the martingale problem}]
\label{T.wpbasic}
Fix $N\in\Ntwo$. For $\bP$-a.s.\ $\omega$ and every $x \in \CP(E)^{\Omega_N}$, the 
$(L^{(\Omega_N)}(\omega),\mathcal{F},\delta_x)$-martingale problem is well-posed. 
The unique solution is a strong Markov process with the Feller property. \hfill $\square$
\end{theorem}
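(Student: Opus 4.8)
The plan is to deduce Theorem~\ref{T.wpbasic} from the already-established homogeneous result, Proposition~\ref{P.vecLambda}, by treating the random environment $\omega$ as a \emph{fixed} deterministic datum and checking that the only structural features used in \cite{GHKK14} survive the replacement of $L^{(\Omega_N)}_{\mathrm{res}}$ by $L^{(\Omega_N)}_{\mathrm{res}}(\omega)$. Fix $\omega$ in the full-measure set on which the quantitative control we need holds. The key observation is that, for fixed $\omega$, the operator $L^{(\Omega_N)}(\omega)$ has exactly the same \emph{form} as $L^{(\Omega_N)}$ in Proposition~\ref{P.vecLambda}: it is a migration part plus a countable sum of reshuffling-resampling jump generators over blocks, plus a Fleming-Viot diffusion part; only the resampling measure attached to a block $B_{|\xi|}(\xi)$ has changed from $\Lambda_{|\xi|}$ to $\Lambda^\xi(\omega) = \lambda_{|\xi|}\chi^\xi(\omega)$, and by \eqref{chiprop} each such measure still satisfies the conditions \eqref{ag} (at the root) and \eqref{ak1000} (elsewhere) that were the hypotheses of Proposition~\ref{P.vecLambda} and Remark~\ref{r.711}. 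Thus the pointwise-in-$x$ well-definedness of $(L^{(\Omega_N)}_{\mathrm{res}}(\omega)F)(x)$ for $F \in \mathcal{F}$ follows by the same Taylor-expansion argument as in \eqref{ak:9999}, now applied block by block with the measure $(\Lambda^\xi(\omega))^*$.

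The substantive point is the \emph{convergence of the infinite sum over $\xi \in \Omega^{\mathbb{T}}_N$} defining the reshuffling-resampling operator, uniformly enough in $x$ to carry through the existence-uniqueness machinery. First I would bound, for $F \in \mathcal{F}$ depending on colonies $\eta_1,\dots,\eta_n$, the contribution of a block $B_{|\xi|}(\xi)$ by a constant (depending on $F$) times $N^{-2|\xi|}\lambda^*_{|\xi|}\rho^\xi(\omega)$ whenever $B_{|\xi|}(\xi)$ contains at least one of the $\eta_i$, and by a smaller (second-order) term otherwise; here $\lambda^*_{|\xi|}=\lambda_{|\xi|}\int_{(0,1]}\chi^\xi(\dd r)/r^2$ should be read off \eqref{ak1000}. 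Summing over the at most $n$ columns through the $\eta_i$ and over heights $k\in\N_0$ reduces everything to the convergence of $\sum_k N^{-k}\lambda^*_k \,\rho^{(\eta_i,k)}(\omega)$. Under the boundedness assumption \eqref{Apropalt} this is immediate from the growth condition \eqref{ak:lambda-growth-condition}; under only the moment assumption \eqref{Aprop} one instead shows that $\bP$-a.s.\ the random factors $\rho^\xi(\omega)$ grow subexponentially along the relevant columns — e.g.\ by a Borel–Cantelli argument using $\bE[(\rho^\xi)^2]=C$ and stationarity to get $\rho^\xi(\omega)=o(N^{\epsilon|\xi|})$ for every $\epsilon>0$ on a full-measure set — and then \eqref{ak:lambda-growth-condition} again closes the estimate. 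This is the place where the "for $\bP$-a.s.\ $\omega$" qualifier in the statement is actually used, and it is the main obstacle: one must make precise that the a.s.\ set of good environments is the same for all initial states $x$ (it is, since the estimate depends on $\omega$ only through the countable family $(\rho^\xi(\omega))_\xi$).

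With the operator under control, the rest of the proof is a routine adaptation of the argument behind Proposition~\ref{P.vecLambda}. For \textbf{existence}, I would realise the process as the increasing limit of approximating processes in which the reshuffling-resampling sum is truncated to blocks of height $\le K$ and, within each block, to resampling values $r \ge 1/m$; each approximation is a well-defined Markov jump-plus-diffusion process on $\CP(E)^{\Omega_N}$ (finitely many jump mechanisms acting locally, plus a Fleming-Viot diffusion), and the uniform estimates above give tightness in the Skorokhod space and let one pass to the limit in the martingale problem. Alternatively one cites the general construction of \cite{GHKK14} verbatim, since none of its steps used homogeneity of $\uL$ beyond the integrability conditions just verified. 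For \textbf{uniqueness}, I would use the duality that the paper announces it will establish (the dual spatial coalescent in random environment): a dual process determines all mixed moments $\bE_x[F(X^{(\Omega_N)}(t))]$ for $F \in \mathcal{F}$, and since $\mathcal{F}$ is measure-determining on the Polish space $\CP(E)^{\Omega_N}$, this pins down the one-dimensional — hence, by the Markov property, all finite-dimensional — distributions, giving uniqueness of the martingale problem. The strong Markov and Feller properties then follow from well-posedness by the standard Stroock–Varadhan argument (uniqueness plus continuity of $x \mapsto$ the solution law, the latter again from the duality representation, which is jointly continuous in the initial condition).
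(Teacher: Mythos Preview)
Your approach is essentially the paper's own: fix $\omega$, observe that $L^{(\Omega_N)}(\omega)$ has the same structural form as $L^{(\Omega_N)}$ in Proposition~\ref{P.vecLambda}, and invoke the machinery of \cite{GHKK14} together with duality (the Evans argument) for uniqueness, strong Markov and Feller. The paper's proof in Section~\ref{ss.dualproofsthms} is in fact even terser than yours --- it simply states that Theorems~\ref{T.wpbasic}--\ref{T.ltbeq} follow from Theorems~\ref{T.dualwellpos}--\ref{T.duallongrun} via the strategy of \cite[Theorem~4.1]{E97}, and that the dual results themselves carry over verbatim from \cite{GHKK14}.

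One small point: your bound ``$N^{-2|\xi|}\lambda^*_{|\xi|}\rho^\xi(\omega)$'' is notationally muddled. If you define $\lambda^*_{|\xi|}$ to already include the $\omega$-dependent factor $\int_{(0,1]}\chi^\xi(\omega)(\dd r)/r^2$, as your parenthetical suggests, then multiplying by $\rho^\xi(\omega)$ double-counts. What you actually need to control along each ancestral column is $\sum_k N^{-k}\lambda_k\int_{(0,1]}\chi^{(\eta,k)}(\omega)(\dd r)/r^2$, and the Borel--Cantelli argument you sketch should be applied to the random variables $\int_{(0,1]}\chi^\xi(\omega)(\dd r)/r^2$ rather than to $\rho^\xi(\omega)$. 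This is a bookkeeping slip, not a gap; the argument goes through once the correct random quantity is tracked.
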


The Markov process arising as the solution of the martingale problem is denoted by
\be{XOmegaNomegadef}
X^{(\Omega_N)}(\omega)=(X^{(\Omega_N)}(\omega;t))_{t\geq 0} 
= ((X_\eta(\omega,t))_{\eta \in \Omega_N})_{t \geq 0},
\ee 
and is referred to as the hierarchical Cannings process on $\Omega_N$ in the environment 
$\omega$. Theorem~\ref{T.wpbasic} does not actually need the second 
condition in \eqref{ag}. This condition will be needed only later.

\subsubsection{Dichotomy: coexistence versus clustering}
\label{sss.coexist}

We next show that the law of our process converges to a limit law that depends on $\omega$.

\begin{theorem}[{\bf Equilibrium}]
\label{T.ltbeq}
Fix $N \in \N\backslash\{1\}$. Suppose that, under the law $\bP$, the law of the initial state 
$X^{(\Omega_N)}(\omega;0)$ is stationary and ergodic under translations in $\Omega_N$, 
with mean single-coordinate measure 
\be{ext}
\theta = \bE\big[X_0^{(\Omega_N)} (\omega;0)\big] \in \CP(E).
\ee 
Then, for $\bP$-a.e.\ $\omega$, there exists an equilibrium measure $\nu^N_\theta(\omega) \in 
\CP(\CP(E)^{\Omega_N})$, arising as
\be{ag4.9}
\lim_{t\to\infty} \CL\big[X^{(\Omega_N)}(\omega;t)\big] = \nu^N_\theta(\omega),
\ee
satisfying
\be{ag4.2}
\int_{\CP(E)^{\Omega_N}} x_0\,\nu^N_\theta(\omega)(\dd x) = \theta.
\ee
Moreover, under the law $\bP$, $\nu^N_\theta(\omega)$ is stationary and ergodic under translations 
in $\Omega_N$. \hfill $\square$
\end{theorem}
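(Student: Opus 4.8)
The plan is to obtain the equilibrium by a duality computation combined with a coupling/martingale argument, following the classical scheme for interacting Fleming–Viot type systems but carefully keeping the environment $\omega$ fixed (quenched). First I would invoke the duality of $X^{(\Omega_N)}(\omega)$ with the spatial coalescent in random environment (established earlier in the excerpt via Theorem~\ref{T.wpbasic} and the announced duality relation): for a fixed $\omega$, the mixed moments of $X^{(\Omega_N)}(\omega;t)$ are expressed in terms of a finite system of coalescing hierarchical random walks in the environment $\omega$, with block coalescence driven by the measures $\Lambda^\xi(\omega)$. The key point is that, because $E$ is compact and the test functions \eqref{ak:multi-level-test-functions} are moment functions, convergence in law of $X^{(\Omega_N)}(\omega;t)$ as $t\to\infty$ is equivalent to convergence of all such dual expectations. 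So the whole problem reduces to showing that, for every finite configuration of starting points and every bounded $f$, the dual expectation converges as $t\to\infty$, and that the limit depends only on $\theta$ and $\omega$ (not on the finer structure of the initial law, beyond its mean single-coordinate measure).

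Next I would analyze the long-time behaviour of the dual. Two mechanisms compete: migration spreads the random walkers out over $\Omega_N$, and block resampling makes them coalesce. For fixed $N$, both the second condition in \eqref{ag} (the genealogy is well-defined, Pitman~\cite{P99}) and the growth conditions \eqref{ak:recurrence-cond}, \eqref{ak:lambda-growth-condition} ensure that the dynamics is non-explosive and that coalescence events accumulate. I would argue that as $t\to\infty$ the set of walkers either collapses to a single particle (giving a "mono-type" contribution, i.e.\ $\int_{E} f(a,\dots,a)\,\theta(\dd a)$) or stabilizes to a fixed number of particles sitting infinitely far apart, in which case—using the stationarity and ergodicity of the initial law under $\Omega_N$-translations together with the mean-$\theta$ assumption \eqref{ext}—the contribution factorizes into a product of $\theta$'s. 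Either way the limiting dual expectation exists and is a measurable function of $\omega$ only through the (stationary, tail-trivial, by \eqref{fdh:tail}) environment along the relevant part of the tree. This yields the existence of $\nu^N_\theta(\omega)$ with \eqref{ag4.9}, while \eqref{ag4.2} is immediate from duality with a single walker: $\int x_0\,\nu^N_\theta(\omega)(\dd x)=\lim_{t\to\infty}\bE[X_0^{(\Omega_N)}(\omega;t)]=\theta$, because the single-particle dual is just a random walk whose starting-law mean is $\theta$ by stationarity.

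Finally I would establish the stationarity and ergodicity of $\nu^N_\theta(\omega)$ under $\Omega_N$-translations, now under the annealed law $\bP$. Stationarity: the generator $L^{(\Omega_N)}(\omega)$ is equivariant under a simultaneous shift of the configuration $x$ and the environment $\omega$ (a translation in $\Omega_N$ shifts the labels in $\Omega_N^{\mathbb{T}}$), and the family $\{\chi^\xi(\omega)\}$ is stationary under such translations by assumption, so $\nu^N_{\theta}(\cdot)$ as a map $\omega\mapsto\nu^N_\theta(\omega)$ is covariant; integrating against $\bP$ and using $\bP$-stationarity gives stationarity of the pair, hence of the marginal on configurations. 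Ergodicity I would get from a mixing/tail-triviality argument: the limiting dual expectations corresponding to widely separated blocks become (asymptotically) independent because the coalescents started in far-apart blocks do not meet and because the environment has trivial tail $\mathcal{T}$ along the tree; this decorrelation of cylinder observables at large distance, combined with ergodicity of the environment, upgrades to ergodicity of $\nu^N_\theta(\omega)$ under $\Omega_N$-translations.

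The main obstacle I expect is the ergodicity statement: separating the genuinely "quenched" randomness of $\omega$ from the intrinsic randomness of the coalescent, and showing that spatial decorrelation of the equilibrium truly holds uniformly enough despite the long-range (all-scales) nature of both the migration kernel \eqref{32b} and the block resampling \eqref{ak:multi-level-resampling-global-RE}. One must check that two dual coalescents started far apart in $\Omega_N$ do stay separated with high probability on all time scales simultaneously—this is where the recurrence/transience dichotomy of the hierarchical random walk (Remark~\ref{degreeretr}) and the growth conditions are used, and it is the technically delicate point. The convergence to equilibrium itself (as opposed to its properties) is comparatively routine once duality is in hand, and \eqref{ag4.2} is essentially free.
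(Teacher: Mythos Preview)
Your overall architecture---duality with the spatial coalescent, then convergence of dual expectations via stabilization of the partition plus an ergodic/averaging argument against the initial law---is the same as the paper's. However, you misidentify where the actual work lies, and your phrasing of the key step is imprecise in a way that hides the genuine difficulty.

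You describe the surviving dual particles as ``stabilizing to a fixed number of particles sitting infinitely far apart,'' after which factorization into powers of $\theta$ follows from stationarity and ergodicity of the initial law. What actually happens is that the \emph{partition} stabilizes (the number of blocks is non-increasing), but the \emph{locations} of the remaining blocks keep moving forever; the factorization comes from an \emph{averaging property} of each single walker's transition kernel: one needs
\[
\lim_{t\to\infty}\sum_{\zeta\in\Omega_N} p_t^\omega(\eta,\zeta)\,f(\zeta)
=\int_{\Omega_N} f(\xi)\,\nu(\dd\xi)
\qquad\text{for every }\eta,\ f\in C_b(\Omega_N),
\]
with $\nu$ the Haar measure. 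In the homogeneous model this is a Fourier-analytic fact (Evans--Fleischmann). In the random environment the walkers are \emph{not} random walks---the migration coefficients $c_k(\omega)(N,\eta)$ in \eqref{cNomega} depend on the site through $\lambda^{\mathrm{MC}_{k+1}(\eta)}(\omega)$---so Fourier analysis is unavailable. The paper supplies a direct argument: place independent Poisson clocks on the inner vertices $\xi\in\Omega_N^{\mathbb{T}}\setminus\Omega_N$ with rates \eqref{PC}, observe that the walk is uniformly distributed on the $K_t(\eta)$-block where $K_t(\eta)$ is the highest level at which an ancestral clock has rung by time $t$, and use $K_t(\eta)\to\infty$ a.s.\ to deduce the averaging property in $\omega$. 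This is the only genuinely new ingredient relative to \cite{GHKK14}; the rest of the equilibrium proof (including stationarity/ergodicity of $\nu_\theta^N(\omega)$ under $\bP$) is declared to carry over verbatim.

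Your ``main obstacle''---controlling whether two dual coalescents started far apart stay separated, and linking this to recurrence/transience---is the content of the \emph{dichotomy} Theorem~\ref{T.coexcritNfin}, not of the equilibrium Theorem~\ref{T.ltbeq}. Convergence to $\nu_\theta^N(\omega)$ holds in both the coexistence and the clustering regime, and does not require any two-particle estimate; only the single-particle averaging above is needed. So your worry about long-range separation is misplaced here, and the step you treat as routine (factorization against the initial law) is precisely where the random environment forces a new argument.
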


\noindent
Note that $\nu^N_\theta(\omega)$ depends on $\omega$ even though its mean single-coordinate 
measure $\theta$ (which is determined by the initial state) does not. The proof of Theorem~\ref{T.ltbeq}
is based on a computation with the dual hierarchical Cannings process, which allows us to control 
second moments. As we will see in Section~\ref{s.randomwalk}, in random environment this computation 
is delicate because it involves two random walks in the same environment, and the \emph{difference} 
of these two random walks is \emph{not} a random walk itself, like in the average environment.

Using the stationarity and ergodicity of $\nu^N_\theta$, we next identify the parameter regime for 
which $\nu^N_\theta(\omega)$ is a \emph{multi-type equilibrium} (= coexistence given $\omega$), 
i.e.,
\be{ag4.8}
\sup_{f \in C_{\mathrm{b}}(E)} \int_{\CP(E)^{\Omega_N}} 
\nu^N_\theta(\omega)(\dd x) \int_E [f(u)-\theta]^2\,x_0(\dd u) > 0,
\ee
respectively, a \emph{mono-type equilibrium} (= clustering given $\omega$), i.e.,
\be{ag4.6}
\nu^N_\theta(\omega) = \int_E \delta_{(\delta_u)^{\Omega_N}} \theta(\dd u).
\ee
The two regimes are complementary. In the latter regime the system grows mono-type clusters that 
eventually cover all finite subsets of $\Omega_N$ (where types may or may not change infinitely 
often). 

\begin{theorem}[{\bf Dichotomy for finite $N$}]
\label{T.coexcritNfin}
Fix $N \in \Ntwo$ and assume \eqref{Apropalt}.
\begin{itemize}
\item[\textup{(a)}]
Let $\CC_N=\{\omega\colon\,\text{in $\omega$ coexistence occurs}\}$. Then 
$\bP(\CC_N) \in \{0,1\}$.
\item[\textup{(b)}] 
$\bP(\CC_N)=1$ if and only if
\be{Ndich}
\sum_{k\in\N_0} \frac{1}{c_k+N^{-1}\lambda_{k+1}} \sum_{l=0}^k \lambda_l<\infty. 
\ee
\end{itemize} 
\hfill $\square$
\et

\noindent
Cox and Klenke~\cite{CK00} give a criterion in the clustering regime for when the type at a given
site changes infinitely often. For interacting Fleming-Viot processes they show that this happens 
as soon as $\theta$ is not a $\delta$-measure. Because of the reasoning in Section~\ref{s.randomwalk}, 
we therefore get the following.

\begin{corollary}[{\bf Change of types}]
In the clustering regime, if $\theta \neq \delta_u$ for some $u \in E$, then at every site the type 
changes infinitely often.  \hfill $\square$ 
\end{corollary}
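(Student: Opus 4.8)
The plan is to deduce the corollary from three ingredients that are already in place: the duality between $X^{(\Omega_N)}(\omega)$ and the hierarchical spatial coalescent in random environment established in Section~\ref{s.dual}, the explicit characterization of the clustering regime in Theorem~\ref{T.coexcritNfin}, and the criterion of Cox and Klenke~\cite{CK00} for interacting Fleming--Viot systems. The real content of the argument is the reduction, essentially carried out already in Section~\ref{s.randomwalk}, of the quenched hierarchical situation to the homogeneous one covered by~\cite{CK00}.

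First I would record what duality gives. For $f \in C_{\mathrm{b}}(E)$ and $\eta \in \Omega_N$, the conditional expectation of $\langle X_\eta(\omega;t), f\rangle$ given the configuration at an earlier time, and its two-time correlations, are computed by running one respectively two (coalescing) dual lineages started from $\eta$. By Theorem~\ref{T.coexcritNfin}, clustering is equivalent to divergence of the series in~\eqref{Ndich}, which by the analysis of Section~\ref{s.randomwalk} is equivalent to the statement that two dual lineages started from any two sites coalesce $\bP$-a.s.\ in the environment $\omega$. On the other hand, a single dual lineage, driven by the strictly positive migration rates $\uc$ together with the reshuffling-resampling, is irreducible on $\Omega_N$ and therefore visits infinitely many sites, $\bP$-a.s.\ in $\omega$.

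Second I would invoke~\cite{CK00}. In the clustering regime the single-site measure $X_\eta(\omega;t)$ converges in law to $\delta_U$ with $U \sim \theta$; by a second-moment argument $X_\eta(\omega;t)$ converges almost surely \emph{if and only if} $\theta = \delta_u$ for some $u \in E$, and otherwise it enters every neighbourhood of $\delta_v$, for $\theta$-typical $v$, infinitely often. Indeed, when $\theta \neq \delta_u$ the type at $\eta$ at time $t$ is, through the dual, read off from the initial types along a lineage that keeps reaching new sites; under the ergodic initial law with mean $\theta \neq \delta_u$ those initial types are not mutually degenerate, so $\langle X_\eta(\omega;t), f\rangle$ cannot converge for a separating family of $f$. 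Since clustering forces every almost-sure limit of $X_\eta(\omega;t)$ to be a Dirac measure, the location of that Dirac must change infinitely often, which is precisely the assertion of the corollary. Irreducibility of the hierarchical random walk gives the conclusion simultaneously at every site, and a tail $0$--$1$ argument (as for the coexistence dichotomy in Theorem~\ref{T.coexcritNfin}) upgrades ``$\bP$-a.s.'' to an honest almost-sure statement.

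The main point that requires work — rather than a new idea — is that the estimates of~\cite{CK00} were carried out for a homogeneous migration kernel and without a frozen environment, so one must check they survive in the quenched hierarchical setting. The hierarchical block structure of the resampling only reinforces clustering, so that part is harmless once lineages are known to keep moving; the genuinely delicate input is the second-moment bookkeeping of Section~\ref{s.randomwalk}, because the difference of two hierarchical random walks in the same environment is no longer a random walk, and one needs the two-lineage dual to be controlled well enough that the dichotomy ``clustering $+$ non-degenerate $\theta$ $\Rightarrow$ non-convergence of the single-site process'' remains valid $\bP$-a.s. I expect this verification, together with a precise description of how the Cannings (rather than Fleming--Viot) resampling enters the motion of a single lineage, to be the bulk of the proof.
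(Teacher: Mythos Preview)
Your proposal is correct and follows essentially the same approach as the paper: the paper's entire proof is the sentence preceding the corollary, which cites Cox and Klenke~\cite{CK00} for the Fleming--Viot case and appeals to ``the reasoning in Section~\ref{s.randomwalk}'' for the adaptation to the quenched hierarchical setting. Your elaboration --- duality, the coalescence criterion of Theorem~\ref{T.coexcritNfin}, irreducibility of a single lineage, and the second-moment control of Section~\ref{s.randomwalk} to handle the fact that the difference of two walks is not itself a walk --- spells out exactly what that sentence compresses.
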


\subsection{Results for $N\to\infty$}
\label{ss.hiermfl}

Our remaining theorems capture the space-time scaling behaviour of our process in the 
hierarchical mean-field limit $N \to \infty$. In this limit, the degree of recurrence/transience 
$\gamma(N)$ tends to $0$ while the Hausdorff dimension $d(N)$ in the metric $\eta\mapsto 
e^{|\eta|}$ tends to $2$ (recall Remark~\ref{degreeretr}), so that our process becomes 
\emph{near-critical}.

In Section~\ref{sss.mfl} we introduce a key process, called the \emph{McKean-Vlasov process}, 
which naturally arises in this limit. In Section~\ref{sss.REinfty} we define the random environment 
for $N=\infty$. In Section~\ref{sss.blav} we look at the block averages on successive space-time 
scales and show that as $N\to\infty$ these converge to a sequence of McKean-Vlasov processes 
with \emph{renormalized volatilities}. In Section~\ref{sss.vola} we identify the scaling behaviour 
of the volatility on hierarchical scale $k$ in the limit as $k\to\infty$, which leads to various different 
cases as a function of $\uc$ and $\uL$. In Section~\ref{sss.coex} we identify the parameter regimes 
that correspond to coexistence, respectively, clustering. In Section~\ref{sss.clust} we link the different 
cases of scaling to five universality classes of cluster formation.
 
\subsubsection{McKean-Vlasov process}
\label{sss.mfl}

We need some definitions and basic facts about the McKean-Vlasov process from \cite{GHKK14}.

Let $\mathcal{F} \subseteq C_\mathrm{b}(\CP(E),\R)$ be the algebra of functions $F$ of the form
\be{ak:multi-level-test-functions*}
F(y) = \int_{E^n} y^{\otimes n}(\dd u)\,f(u),
\qquad y \in \CP(E),\, n \in \N,\,f \in C_{\mathrm{b}}(E^n,\R).
\ee
For $c,d \in [0,\infty)$, $\Lambda\in\CM_f([0,1])$ subject to (\ref{ag}) and $\theta\in\CP(E)$, let 
$L_\theta^{c,d,\Lambda}\colon\,\mathcal{F} \to C_\mathrm{b}(\CP(E),\R)$ be the linear operator
\be{generic-interaction-operator}
L_\theta^{c,d,\Lambda} = L^c_\theta + L^d + L^\Lambda
\ee
acting on $F\in \mathcal{F}$ as (recall \eqref{ak:flemming-viot-kernel})
\be{Ldefs}
\begin{aligned}
(L^c_\theta F)(y) &= c \int_E \left(\theta-y\right)(\dd a)\,
\frac{\partial F(y)}{\partial y}[\delta_a],\\
(L^dF)(y) &= d \int_E \int_E Q_y(\dd u,\dd v)\,
\frac{\partial^2 F(y)}{\partial y^2}[\delta_u,\delta_v],\\
(L^\Lambda F)(y) &= \int_{(0,1]} \Lambda^*(\dd r) \int_E y(\dd a)\,
\big[F\big((1-r)y+ r\delta_a\big)-F(y)\big].
\end{aligned}
\ee
The three parts of $L_\theta^{c,d,\Lambda}$ correspond to: (1) a \emph{drift} towards $\theta$ of 
strength $c$ (``immigration-emigration''); (2) a \emph{Fleming-Viot diffusion} with \emph{volatility} 
$d$ (``Moran resampling''); (3) a \emph{Cannings process} with \emph{resampling measure} 
$\Lambda$ (``Cannings resampling''). This model arises as the $M \to\infty$ limit of an individual-based 
model with $M$ individuals at a single site, with {\em immigration} at rate $c$ from a constant source 
with type distribution $\theta \in \CP(E)$, {\em emigration} at rate $c$ to a cemetery state, diffusive 
resampling at rate $d$, and $\Lambda$-resampling.
 
The following proposition was proved in \cite{GHKK14}.
  
\begin{proposition}[{\bf McKean-Vlasov martingale problem}]
\label{prop:McKean-Vlasov-well-posedness} $\mbox{}$
\begin{itemize}
\item[\textup{(a)}]
For every $y \in \CP(E)$, the martingale problem for $(L_\theta^{c,d,\Lambda}, \mathcal{F},\delta_y)$ 
is well-posed. The unique solution is a strong Markov process with the Feller property.
\item[\textup{(b)}]
For every $c \in (0,\infty)$, the solution from \textup{(a)} is ergodic in time with unique equilibrium 
measure $\nu_\theta^{c,d,\Lambda}$. For $c=0$, the solution from \textup{(a)} is not ergodic in 
time, and $\nu_\theta^{0,d,\Lambda}$ is the unique equilibrium measure obtained as the $t\to\infty$ 
limit with initial state $y=\theta$.
\item[\textup{(c)}] 
For $c > 0$,
\begin{equation}
\label{eq:equi-ident-c}
\nu_\theta^{c,d,\Lambda} = \nu_\theta^{1,d/c,\Lambda/c}.
\end{equation}
\end{itemize}
\hfill $\square$
\ep

Denote by
\be{Zdef}
Z_\theta^{c,d,\Lambda} = \big(Z_\theta^{c,d,\Lambda}(t)\big)_{t\geq 0},
\quad Z^{c,d,\Lambda}_\theta (0) = \theta,
\ee
the solution of the martingale problem in Proposition~\ref{prop:McKean-Vlasov-well-posedness} 
for the special choice $y=\theta$. This is called the \emph{McKean-Vlasov process} with  parameters 
$c,d,\Lambda$ and initial state $\theta$.

\subsubsection{Random environment for $N = \infty$}
\label{sss.REinfty}

In order to be able to pass to the limit $N\to\infty$, we need to define a random environment for 
$N=\infty$ in which all the random environments for finite $N$ are embedded. To that end, define 
$\Omega_\infty = \oplus_\N \N$, and let (recall \eqref{fulltreedef})
\be{e1096}
\Omega^{\mathbb{T}}_\infty = \bigcup_{k\in\N_0} \Omega^{(k)}_\infty
\quad \text{ with } \quad \Omega^{(k)}_\infty = \Omega_\infty / B_k(0).
\ee 
Note that for any $N \in \N$ there is a natural embedding of $\Omega^{\mathbb{T}}_N$ into 
$\Omega^{\mathbb{T}}_\infty$. Similarly as in Section~\ref{sss.RE}, we let
\be{reNinfinite}
\underline{\Lambda}(\omega) = \big\{\Lambda^\xi(\omega)  \colon\, 
\xi \in \Omega^{\mathbb{T}}_\infty \big\}  
\ee 
be a random field of $\mathcal{M}_f([0,1])$-valued resampling measures index by the full tree, where 
$\omega$ again denotes the random environment. We retain the symbol $\bP$ for the law of of 
$\omega$. As in \eqref{ap2}--\eqref{fdh:tail}, we assume that $\Lambda^\xi(\omega) = \lambda_{|\xi|} 
\chi^\xi(\omega)$ where, under the law $\bP$,  $\{\chi^\xi(\omega)\colon\,\xi \in\Omega^{\mathbb{T}}_\infty\}$ 
is stationary under translations in $\Omega^{\mathbb{T}}_\infty$, and is such that the total masses 
$\rho^\xi(\omega) = \chi^\xi(\omega)((0,1])$ have first moment equal to $1$, second moment finite, 
and a trivial sigma-algebra at infinity. For any $N \in \N$, the natural restriction of the random field in 
\eqref{reNinfinite} equals the random field in \eqref{ap1}.

\subsubsection{Renormalization via block averages}
\label{sss.blav}

For each $k \in \N_0$, we look at the \emph{$k$-block averages} defined by (recall Fig.~\ref{fig-hierargr}) 
\be{k-block-average}
Y_{\eta,k}^{(\Omega_N)}(\omega;t) = \frac{1}{N^k}
\sum_{\zeta\in B_k(\eta)} X_{\zeta}^{(\Omega_N)}(\omega;t), 
\qquad \eta \in \Omega_N,
\ee
which constitute a {\em renormalization of space} where the component $\eta$ is replaced by the average 
of the components in $B_k(\eta)$. After a corresponding {\em renormalization of time} where $t$ is 
replaced by $tN^k$, i.e., $t$ is the associated macroscopic time variable, we obtain a {\em renormalized} 
interacting system
\be{a3}
\left(\left(Y^{(\Omega_N)}_{\eta,k}(\omega;t N^k)
\right)_{\eta \in \Omega_N}\right)_{t \geq 0},
\qquad k \in \N_0,\,\eta \in \Omega_N,
\ee
which is constant in $B_k(\eta)$ and can be viewed as an interacting system indexed by the set 
$\Omega^{(k)}_N$ (see Fig.~\ref{fig-hierargr}). This provides us with a \emph{sequence of 
renormalized interacting systems}, which for fixed $N$ are {\em not} Markov.

The key ingredient to study the $ N \to \infty $ limit of \eqref{a3} is the following. 
Let $\ud=(d_k)_{k\in\N_0}$ be the sequence of \emph{volatility constants} defined recursively as
\be{diffusion-constants}
d_{k+1} = \bE_{\CL_\rho}\left[\frac{c_k(\mu_k\rho +d_k)}{c_k + (\mu_k\rho + d_k)}\right],
\quad k \in \N_0,
\ee
where 
\be{mulambdarel}
\mu_k=\tfrac12\lambda_k, \quad k\in\N_0,
\ee 
$\rho$ is the $(0,\infty)$-valued random variable whose law $\CL_\rho$ is the same as that of 
$\rho^0(\omega)$ under $\bP$ (recall (\ref{Aprop}--\ref{fdh:tail})), and $\bE_{\CL_\rho}$ is expectation 
w.r.t.\ $\CL_\rho$. For fixed $\uc$, $\uL$ and $d_0$, the recursion in \eqref{diffusion-constants} 
determines $\ud$. The right-hand side is the \emph{average of a random M\"obius transformation 
that depends on $\rho$}. Recall that $\rho$ has mean 1.
 
\paragraph{Heuristics behind the recursion formula for the volatilities.} 
In order to understand the recursion formula in \eqref{diffusion-constants}, we consider the 1-block 
around the origin $0$ on time scale $Nt$ and let $N\to\infty$. Note that, in this limit, the time scales 
for the jumps to different levels separate (recall \eqref{32b}), so that we can focus on each of the time 
scales separately.

If we randomly draw two lineages from the 1-block and ask whether they have a common ancestor 
some time back (so that they are of the same type), then we get exactly the event that generates 
the variance of the 1-block average (otherwise the lineages and their types would be independent 
and would have an asymptotically vanishing contribution to the variance). The fact that the lineages 
behave like a spatial coalescent follows from the duality introduced in Section~\ref{s.model}. The 
lineages have to meet in order to have a common ancestor, which takes them a time of order $Nt$. 
Note that triples of lineages have a negligible probability to meet at times of order $Nt$ in the limit of 
$N \to \infty$.

If the lineages meet, then they may coalesce. This happens at rate 
\begin{equation}\label{e1175}
\lambda^{(\eta,0)}(\omega)=\Lambda^{(\eta,0)}((0,1])(\omega)
\end{equation} 
when they both sit at $\eta\in\Omega$. However, they may also move before they coalesce, i.e., make 
a migration jump away, which happens with probability $2c_0 /(2c_0+\lambda^{(\eta,0)}(\omega))$. 
Hence the effective coalescence rate is $\lambda^{(\eta,0)}(\omega)[2c_0/(2c_0+\lambda^{(\eta,0)}
(\omega))]$. Since the vertex where the lineages meet is uniformly distributed over the 1-block, the 
average rate is given by 
\begin{equation}\label{e1184} 
\E\left[\frac{2c_0\lambda_0\rho(\omega)}{2c_0+\lambda_0\rho(\omega)}\right],
\end{equation} 
where we use that $\lambda^{(\eta,0)}(\omega)$ has the same distribution as $\lambda_0\rho(\omega)$ 
(recall \eqref{ap2}--\eqref{fdh:masschi}). If we would have a diffusive part as well, at constant rate 
$2d_0 $, then the lineages would coalesce at the same rate but with $\lambda(\omega)$ replaced 
by $2d_0+\lambda(\omega)$. Since the volatility turns out to be equal to this rate, we get the recursion 
formula 
\begin{equation}\label{e1192}
2d_1=\E\left[\frac{2c_0(2d_0+\lambda_0\varrho(\omega))}
{2c_0+(2d_0+\lambda_0\varrho(\omega))}\right].
\end{equation}
By the same reasoning for $k$-blocks on time scale $tN^k$, we get a heuristic explanation for the 
recursion formula in \eqref{diffusion-constants}.

\medskip
Our next theorem states that for each $k\in\N_0$ the $k$-block averages in the limit as $N\to\infty$ 
evolve according to the McKean-Vlasov process defined in Section~\ref{sss.mfl} with certain 
$k$-dependent parameters.  

\begin{theorem}[{\bf Hierarchical mean-field limit and renormalization}]
\label{mainth}
Suppose that for each $N$ the random field $X^{(\Omega_N)}(\omega;0)$ is the restriction to 
$\Omega_N$ of a random field $X(\omega)$ indexed by $\Omega_\infty = \bigoplus_\N  \N$ 
that is i.i.d.\ with single-component mean $\theta\in\CP(E)$. Then, for $\bP$-a.e.\ $\omega$ and 
every $k\in\N$ and $\eta\in\Omega_\infty$,
\be{macroscopic-behaviour}
\lim_{N\to\infty} \mathcal{L}
\left[\left(Y_{\eta,k}^{(\Omega_N)}(\omega;t N^k)\right)_{t\geq 0}\right]
= \mathcal{L}\left[\left(Z_\theta^{c_k,d_k,\Lambda^{\mathrm{MC}_k(\eta)}
(\omega)}(t)\right)_{t\geq 0}\right],
\ee
where 
\be{MCdef}
\mathrm{MC}_k(\eta) = \text{ unique site in $\Omega^\T_\infty$ at height $k$ above 
$\eta\in\Omega_\infty$}, 
\ee
i.e., the label of the block (= macro-colony) of radius $k$ in $\Omega_\infty$ around $\eta\in
\Omega_\infty$ (see \textup{Fig.}~{\rm \ref{fig-hierargr}}). The same is true for $k=0$ 
when the initial condition for the McKean-Vlasov process in the right-hand side of 
\eqref{macroscopic-behaviour} is $Z_\theta^{c_0,d_0,\Lambda^{\eta}(\omega)}(0) 
=X^{(\Omega_N)}(\omega;0)$ instead of $Z_\theta^{c_0,d_0,\Lambda^{\eta}(\omega)}(0)
=\theta$. \hfill $\square$
\end{theorem}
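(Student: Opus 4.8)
The plan is to establish \eqref{macroscopic-behaviour} by a combination of a duality argument (to control moments uniformly in $N$) and a propagation-of-chaos / averaging argument (to identify the limiting martingale problem). First I would fix the environment $\omega$ in the full $\bP$-probability-$1$ set on which all earlier a.s.\ statements (Theorem~\ref{T.wpbasic}, the law-of-large-numbers-type convergence coming from triviality of $\CT$ in \eqref{fdh:tail}, and well-posedness of the relevant dual) hold simultaneously, and work pathwise in $\omega$ from then on. The core of the proof is a \emph{martingale problem convergence}: by Proposition~\ref{P.vecLambda}, the process $X^{(\Omega_N)}(\omega)$ solves the $(L^{(\Omega_N)}(\omega),\CF,\delta_x)$-martingale problem, hence for every test function $F\in\CF$ of the form \eqref{ak:multi-level-test-functions} depending only on coordinates in $B_k(\eta)$, applying the generator to $F$ and using the block-average structure produces, after the time change $t\mapsto tN^k$, a semimartingale decomposition of $F(Y^{(\Omega_N)}_{\eta,k}(\omega;tN^k))$. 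One then shows that as $N\to\infty$ the drift and quadratic-variation terms converge (in the appropriate sense) to those of the generator $L_\theta^{c_k,d_k,\Lambda^{\mathrm{MC}_k(\eta)}(\omega)}$ from \eqref{generic-interaction-operator}, and that tightness holds, so that any limit point solves the McKean-Vlasov martingale problem; uniqueness from Proposition~\ref{prop:McKean-Vlasov-well-posedness}(a) then pins down the limit.

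The analysis of the generator terms is where the structure of \eqref{diffusion-constants} enters. On time scale $tN^k$, the reshuffling-resampling events at levels $\ell<k$ happen infinitely often but each has effect $O(N^{-2\ell})$ in rate; after reshuffling, a level-$\ell$ resampling event acting inside $B_k(\eta)$ changes the $k$-block average only by an amount of order $N^{\ell-k}$, so these events contribute, in the $N\to\infty$ limit and by a law-of-large-numbers over the roughly $N^{k-\ell}$ sub-blocks, a \emph{deterministic} diffusive (Fleming-Viot) term; this is exactly the mechanism producing the volatility $d_k$, and the bookkeeping of rates is what makes the Möbius recursion appear. The level-$k$ resampling events, by contrast, act on the whole of $B_k(\eta)$ at once and survive as a genuine Cannings term with resampling measure $\Lambda^{\mathrm{MC}_k(\eta)}(\omega)$ (the measure attached to the center of the block, by \eqref{ak:multi-level-resampling-global-RE}), while the level-$(k+1)$ and higher migration/resampling events become, on this time scale, a constant drift of strength $c_k$ towards the (frozen) $(k+1)$-block average $\theta$. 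The technically delicate point, which I expect to be the main obstacle, is proving that the lower-level contributions genuinely \emph{self-average} to the deterministic constant $d_k$ rather than retaining residual randomness: this requires a second-moment (variance) estimate, obtained via the dual spatial coalescent in random environment, showing that two lineages started in $B_k(\eta)$ coalesce at the asymptotic rate computed in the heuristics \eqref{e1184}--\eqref{e1192}, uniformly enough in $N$. As the paper itself flags, the difficulty is that in random environment the difference of two coalescing random walks in the \emph{same} environment is not itself a Markov chain, so the standard homogeneous-environment moment computations do not apply verbatim; one must instead use the triviality of the tail sigma-algebra \eqref{fdh:tail} to replace environment-dependent quantities by their $\bP$-expectations in the limit.

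The remaining steps are comparatively routine. Tightness of $\CL[(Y^{(\Omega_N)}_{\eta,k}(\omega;tN^k))_{t\ge0}]$ in the Skorokhod space follows from the semimartingale decomposition together with the uniform moment bounds from duality, via a standard Aldous-type criterion; here one uses assumptions \eqref{ak:recurrence-cond} and \eqref{ak:lambda-growth-condition} to ensure total migration and resampling rates per individual stay bounded, and \eqref{Aprop} (second moment of $\rho$ finite) to control the Cannings jumps. The identification of the initial condition is immediate from the i.i.d.\ hypothesis on $X(\omega)$: for $k\ge1$ the $k$-block average of an i.i.d.\ field converges by the law of large numbers to the deterministic mean $\theta$, whereas for $k=0$ no averaging occurs and the initial state is $X^{(\Omega_N)}(\omega;0)$ itself, which is exactly the exceptional clause stated at the end of the theorem. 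Finally, consistency across different $N$ is guaranteed by the embedding of $\Omega^\T_N$ into $\Omega^\T_\infty$ described in Section~\ref{sss.REinfty}, so that $\Lambda^{\mathrm{MC}_k(\eta)}(\omega)$ is well-defined as an object on the $N=\infty$ tree and the statement makes sense simultaneously for all $N$; one takes the a.s.\ set of $\omega$ to be the countable intersection over $N\in\Ntwo$ and $k\in\N$ of the corresponding a.s.\ sets.
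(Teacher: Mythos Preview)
Your overall architecture---tightness, generator convergence for block averages, and identification of the limit via uniqueness of the McKean-Vlasov martingale problem---is exactly the paper's strategy, and your reading of the three pieces of the limiting generator (level-$k$ resampling surviving as the Cannings term with $\Lambda^{\mathrm{MC}_k(\eta)}(\omega)$, lower levels collapsing to a Fleming-Viot diffusion with volatility $d_k$, higher levels freezing into a drift $c_k$ toward $\theta$) is correct. Where you diverge is in the \emph{mechanism} for the self-averaging that produces $d_k$. You propose second-moment estimates via the dual coalescent and flag the non-Markovianity of the difference walk as the obstacle. The paper does not go through duality for this step: following \cite{GHKK14}, it computes the generator acting on polynomials of the block average directly, expands in monomials of single components, and isolates a set $\mathbb{B}^*$ of configurations (\eqref{ag60}--\eqref{ag61}) on which the empirical measure of the pairs $(\chi^i(\omega),x_i)$ converges; on that set the generator converges by an ordinary law of large numbers for $c_0/(c_0+\mu_0\rho^i+d_0)$, $i\in\Omega$ (cf.\ \eqref{ag61alt}), and this is where the recursion \eqref{diffusion-constants} drops out. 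The stationarity and tail-triviality of $\rho$ enter here to make the empirical average deterministic, not through coalescent moment computations. The ``two random walks in the same environment'' difficulty you raise is genuine but belongs to the dichotomy Theorem~\ref{T.coexcritNfin}, not to the present theorem.

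Two further structural points in the paper are missing from your sketch and would be needed to close the argument. First, the proof is built as a hierarchy of approximations: mean-field on $\{1,\ldots,N\}$ (Theorem~\ref{T.McKVl}), then the $2$-level system on $\{0,\ldots,N-1\}^2$ (Proposition~\ref{P.2lresc}), then $k$-level, and finally the passage to infinitely many levels by showing that cutting off levels $\geq k+1$ is a good approximation uniformly in $N$. Second, because $\rho^\xi(\omega)$ is \emph{unbounded} under \eqref{Aprop}, the uniform higher-order perturbation estimates used in the homogeneous case fail; the paper handles this by truncating the resampling coefficients at a level $M<\infty$, proving the statement for the truncated system, and removing the truncation via a coupling argument plus integrability of $\rho$. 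Your Aldous-criterion tightness would run into the same unboundedness; the paper uses the Joffe--M\'etivier criterion instead, after coding the environment into the initial condition.
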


Note that among the parameters $c_k,d_k,\Lambda^{\mathrm{MC}_k(\eta)}(\omega)$ of the 
limiting McKean-Vlasov process, the volatility $d_k$ is the result of a \emph{self-averaging} 
with respect to the random environment up to and including level $k$, as exemplified by 
\eqref{diffusion-constants}. \emph{It is through this recursion relation that the renormalization 
manifests itself.} 
 
Our next theorem looks at successive block averages simultaneously.
 
\begin{theorem}[{\bf Multi-scale analysis and the interaction chain}]
\label{T.InteractionChain}
Let $(t_N)_{N\in\N}$ be such that 
\begin{equation}\label{e1223}
\begin{aligned}
\lim_{N\to\infty} t_N=\infty \text{ and } \lim_{N\to\infty}t_N/N=0.
\end{aligned}
\end{equation}
Then, for $\bP$-a.e.\ $\omega$, every $j\in\N$ and every $\eta\in\Omega_\infty$,
\be{ag:interaction-chain}
\lim_{N\to\infty} \mathcal{L}\left[ 
\left( Y^{(\Omega_N)}_{\eta, k}(\omega; t_N N^k) \right)_{k = -(j+1), -j, \ldots, 0}\right]
= \mathcal{L}\left[\left(M^{(j)}_{\eta, k}(\omega)\right)_{k = -(j+1), -j, \ldots, 0}\right],
\ee
where $M^{(j)}_\eta(\omega)=(M^{(j)}_{\eta, k}(\omega))_{k = -(j+1), -j, \ldots, 0}$ is the 
time-inhomogeneous Markov chain with initial state 
\be{ag:interaction-chain-initial}
M^{(j)}_{\eta, -(j+1)}(\omega) = \theta,
\ee
and transition kernel from time $-(k+1)$ to $-k$ given by
\be{ag:interaction-chain-transition-kernel}
K_{\eta, k}(\omega; \theta, \cdot) 
= \nu^{c_k, d_k, \Lambda^{\mathrm{MC}_k(\eta)}(\omega)}_\theta(\cdot).
\ee 
\hfill $\square$
\end{theorem}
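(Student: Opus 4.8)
The strategy is to deduce Theorem~\ref{T.InteractionChain} from the single-scale convergence already established in Theorem~\ref{mainth}, by a standard Markov-property argument adapted to the time-inhomogeneous hierarchical setting, with the random environment carried along as a fixed parameter. First I would fix $\omega$ in the full-measure set where Theorem~\ref{mainth} holds for all $k\in\N_0$ simultaneously (a countable intersection of full-measure events, hence still full measure). The core observation is that on the time scale $t_N N^k$ with $t_N\to\infty$ but $t_N/N\to 0$, the $(k+1)$-block average $Y^{(\Omega_N)}_{\eta,k+1}(\omega;t_N N^k)$ moves on a time scale that is a factor $N$ slower, so it is \emph{frozen} relative to the $k$-block average, while the $k$-block average has had time $t_N\to\infty$ to equilibrate \emph{given} the value of the $(k+1)$-block average. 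This is precisely the content that makes the limiting object the interaction chain: conditionally on $M^{(j)}_{\eta,-(k+1)}(\omega)=\theta'$, the next coordinate $M^{(j)}_{\eta,-k}(\omega)$ is distributed according to the equilibrium $\nu^{c_k,d_k,\Lambda^{\mathrm{MC}_k(\eta)}(\omega)}_{\theta'}$ of the McKean-Vlasov process from Theorem~\ref{mainth}, which exists and is unique by Proposition~\ref{prop:McKean-Vlasov-well-posedness}(b) since $c_k>0$.

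The proof would proceed by downward induction on $k$, i.e.\ from the coarsest scale $-(j+1)$ down to $-0$. For the base case, $Y^{(\Omega_N)}_{\eta,j+1}(\omega;t_N N^{j+1})$ converges to $\theta$: since the initial field is i.i.d.\ with mean $\theta$, the $(j+1)$-block average at time $0$ is already close to $\theta$ by a law of large numbers as $N\to\infty$, and on time scale $t_N N^{j+1}=o(N^{j+2})$ it has not yet moved, so the limit is the constant $\theta$ — this matches \eqref{ag:interaction-chain-initial}. For the inductive step, suppose the joint law of $(Y^{(\Omega_N)}_{\eta,k}(\omega;t_N N^k))_{k=-(j+1),\dots,-(m+1)}$ converges to $(M^{(j)}_{\eta,k}(\omega))_{k=-(j+1),\dots,-(m+1)}$. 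To add the coordinate at level $-m$, I would invoke Theorem~\ref{mainth} at scale $m$ \emph{with the randomized initial condition given by the state of the $(m+1)$-block average}: on time scale $N^m$ the $m$-block average evolves as the McKean-Vlasov process $Z^{c_m,d_m,\Lambda^{\mathrm{MC}_m(\eta)}(\omega)}$ started from the (slowly varying, asymptotically constant) value of $Y^{(\Omega_N)}_{\eta,m+1}$, and running it for the additional macroscopic time $t_N\to\infty$ drives it to the ergodic equilibrium $\nu^{c_m,d_m,\Lambda^{\mathrm{MC}_m(\eta)}_\theta}$ — with the crucial point that, by Proposition~\ref{prop:McKean-Vlasov-well-posedness}(b), this equilibrium is the unique one and is approached for \emph{every} starting measure. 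Combining the inductive hypothesis (the law of the coarser coordinates converges) with this conditional convergence, and using that the transition from scale $m+1$ to scale $m$ depends on the coarser history only through $Y^{(\Omega_N)}_{\eta,m+1}$, gives the convergence of the enlarged vector with the claimed Markov transition kernel \eqref{ag:interaction-chain-transition-kernel}.

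The main technical obstacle is making the ``time-scale separation'' rigorous in the presence of the random environment. One must show: (i) the $(m+1)$-block average is genuinely frozen on time scale $t_N N^m$, uniformly enough that it can be treated as a fixed initial condition for the level-$m$ McKean-Vlasov process — here one needs a quantitative control of the modulus of continuity of $t\mapsto Y^{(\Omega_N)}_{\eta,m+1}(\omega;t)$, which follows from second-moment (variance) estimates via the dual, and the environment enters through the resampling rates but the bounds are uniform under \eqref{Aprop} (or \eqref{Apropalt}); (ii) the ``double limit'' $N\to\infty$ then $t_N\to\infty$ can be interchanged, i.e.\ that running the prelimit block average for time $t_N N^m$ with $t_N\to\infty$ slowly is asymptotically the same as first taking $N\to\infty$ (getting the McKean-Vlasov process) and then letting its time go to infinity (getting the equilibrium). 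This is a standard but delicate interchange-of-limits argument: one picks up an error from the non-Markovianity of the prelimit block averages and from the fact that the lower-level ($<m$) fluctuations have not yet fully averaged at time $t_N N^m$; both are controlled by showing the contribution of sub-level activity to the variance on time scale $t_N N^m$ is $o(1)$ as $N\to\infty$ for fixed large $t_N$, again via the dual. Once these two points are in hand, the inductive assembly of the finite-dimensional distributions is routine, and the almost-sure-in-$\omega$ statement comes for free since all the inputs (Theorem~\ref{mainth}, the variance bounds) hold $\bP$-a.s.\ and we only use countably many of them.
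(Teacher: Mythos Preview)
Your proposal is correct and follows the same underlying strategy as the paper: time-scale separation, freezing of the coarse block average, equilibration of the fine block average to the McKean-Vlasov equilibrium, and downward induction to assemble the interaction chain. Two remarks on how the paper differs in execution. First, the paper does not deduce Theorem~\ref{T.InteractionChain} from Theorem~\ref{mainth} as a black box; both are proved together in Section~\ref{s.mkvrand} via a sequence of finite $k$-level approximations ($G_{N,2}$ on three time scales in Section~\ref{sss.2l}, then $G_{N,k}$ on $k{+}1$ time scales in Section~\ref{sss.kl}, then the full $\Omega_N$ in Section~\ref{sss.3l}) analyzed through generator convergence, following \cite{GHKK14} and \cite{DGV95}. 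The conditional statement you need for your induction---that the $m$-block average, given the $(m{+}1)$-block average equals $\theta'$, evolves as the McKean-Vlasov process with drift toward $\theta'$---is exactly what the $k$-level generator calculation establishes, so your route is not a genuine shortcut; it repackages the same work. Second, the random-environment-specific difficulty you underplay in your point~(ii) is that the rates $\rho^\xi(\omega)$ are unbounded under \eqref{Aprop}, which breaks the uniform-in-$N$ perturbation estimates used in the homogeneous case and that you implicitly rely on for your interchange-of-limits. The paper handles this (see the paragraphs after Proposition~\ref{P.2lresc} and in Section~\ref{sss.3l}) by first truncating $\lambda^{\mathrm{MC}_k(\cdot,\eta)}(\omega)$ at a fixed level $M<\infty$, running the homogeneous-case argument for the truncated system, and then removing the truncation via a coupling argument that exploits the integrability of the rates; your variance/dual bounds alone do not circumvent this step.
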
 

\noindent
The right-hand side of \eqref{e1223} describes the large space-time scaling behaviour of our 
hierarchical Cannings process. 

\begin{definition}[{\bf Interaction chain}]
\label{def:interaction-chain}
$M^{(j)}_{\eta}(\omega)$ is called the interaction chain at level $j$ at location $\eta \in 
\Omega_\infty$ given $\omega$. \hfill $\square$
\end{definition}

\begin{remark}
\label{r.1261}
{\rm Theorem \ref{T.InteractionChain} only specifies the limiting distribution of the one-dimensional 
spatial marginals, i.e., the single interaction chains. Similarly as in Dawson, Greven and 
Vaillancourt~{\rm \cite[Section~0e]{DGV95}}, it is possible to also specify the joint distribution 
of the interaction chains, which can be viewed as a \emph{field of Markov chains indexed by} 
$\Omega^{\mathbb{T}}_\infty$.} \hfill $\square$
\end{remark}

An important characteristic of $ M^{(j)}_\eta$ is the variance of $ M^{(j)}_{\eta,0}$, calculated 
as 
\begin{equation}\label{e1268}
\var \langle M_{\eta,0}^{(j)}, f \rangle 
= \prod^{j}_{k=0} \frac{2c_k}{2c_k + \lambda_k\rho_k + 2d_k} \var_\theta(f).
\end{equation}
This shows that a key ingredient for $M_\eta^{(j)}$ is the sequence of volatilities $\ud=(d_k)_{k 
\in \N_0}$ and the way this sequence grows or decays. How is this affected by the randomness of 
the environment?

Our next theorem shows that the volatility $d_k$ in the random environment can be sandwiched 
between the volatility $d^0_k$ in the zero environment ($\CL_\rho=\delta_0$, i.e., the system 
without resampling) and the volatility $d^1_k$ in the average environment ($\CL_\rho=\delta_1$, 
i.e., the system with average resampling).

\begin{theorem}[{\bf Randomness lowers volatility}]
\label{T.order}
If $d^0_0=d_0 = d^1_0$, then $d^0_k<d_k < d^1_k$ for all $k \in \N$.  \hfill $\square$
\end{theorem}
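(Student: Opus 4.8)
The plan is to analyze the recursion \eqref{diffusion-constants} for fixed $k$ as a function of the law $\CL_\rho$ of the multiplicative noise, and to compare the three cases $\CL_\rho=\delta_0$, $\CL_\rho=\delta_1$, and a general $\CL_\rho$ with mean $1$. Write $g_k(s) = \tfrac{c_k s}{c_k+s}$ for $s\in[0,\infty)$, so that \eqref{diffusion-constants} reads $d_{k+1} = \bE_{\CL_\rho}[g_k(\mu_k\rho+d_k)]$. The key analytic fact is that $g_k$ is strictly concave and strictly increasing on $[0,\infty)$: indeed $g_k(s) = c_k - c_k^2/(c_k+s)$, so $g_k'(s) = c_k^2/(c_k+s)^2 > 0$ and $g_k''(s) = -2c_k^2/(c_k+s)^3 < 0$. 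First I would establish the upper bound $d_k < d^1_k$ by induction on $k$. Suppose $d_k \leq d^1_k$ (with equality allowed, and equality holding at $k=0$ by hypothesis). Then, since $\rho$ has mean $1$, strict concavity of $g_k$ together with Jensen's inequality gives $\bE_{\CL_\rho}[g_k(\mu_k\rho+d_k)] < g_k(\mu_k\bE[\rho]+d_k) = g_k(\mu_k+d_k)$, provided $\rho$ is genuinely random (i.e.\ $\CL_\rho\neq\delta_1$), which is part of the standing hypothesis $C>1$ implicit in comparing with the average environment; and $g_k(\mu_k+d_k) \leq g_k(\mu_k+d^1_k) = d^1_{k+1}$ by monotonicity of $g_k$ and the induction hypothesis $d_k\le d^1_k$. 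Hence $d_{k+1} < d^1_{k+1}$, and the strict inequality propagates for all subsequent $k$.

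For the lower bound $d^0_k < d_k$, note that with $\CL_\rho=\delta_0$ the recursion becomes $d^0_{k+1} = g_k(d^0_k)$. I would again argue by induction: assuming $d^0_k \leq d_k$, monotonicity of $g_k$ gives $g_k(d^0_k) \leq g_k(d_k) = g_k(\mu_k\cdot 0 + d_k)$; since $\mu_k = \tfrac12\lambda_k > 0$ (by \eqref{lambdakdef}) and $\rho > 0$ $\CL_\rho$-a.s.\ with $\bE[\rho]=1$, we have $\mu_k\rho + d_k > d_k$ on a set of positive probability, so by strict monotonicity of $g_k$, $\bE_{\CL_\rho}[g_k(\mu_k\rho + d_k)] > g_k(d_k) \geq g_k(d^0_k) = d^0_{k+1}$. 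Thus $d_{k+1} > d^0_{k+1}$. The base case at $k=1$ is where the strict inequality is first generated: $d_1 = \bE_{\CL_\rho}[g_0(\mu_0\rho + d_0)] > g_0(d_0) = d^0_1$ because $\mu_0 > 0$ forces $\mu_0\rho + d_0 > d_0$ a.s., and $g_0$ is strictly increasing (here we only need $\rho>0$ a.s., not randomness of $\rho$). Combining the two inductions yields $d^0_k < d_k < d^1_k$ for all $k\in\N$, as claimed.

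The main point requiring care — and the only genuine obstacle — is the \emph{strictness} in the upper bound, which relies on $\CL_\rho$ being a nondegenerate law. Strict Jensen requires both that $g_k$ is strictly (not merely weakly) concave on the relevant interval — which holds everywhere on $[0,\infty)$ since $c_k\in(0,\infty)$ — and that $\rho$ is not $\bP$-a.s.\ constant. The latter is guaranteed once one is genuinely in the "random environment" setting; if one wanted the statement to cover the degenerate case one would only get $\le$. I would make this hypothesis explicit (it is consistent with $C>1$ in \eqref{Aprop}, since $C=\bE[\rho^2]>(\bE[\rho])^2=1$ forces $\rho$ nonconstant). A secondary technical point: one must check that all the $d_k$ and $d^0_k$, $d^1_k$ are finite and nonnegative so that $g_k$ is evaluated on $[0,\infty)$ where concavity and monotonicity hold; this follows immediately from $0 \leq g_k(s) < c_k$ for all $s\ge 0$, so the sequences are automatically bounded by $(c_k)_k$ from the first step onward, and nonnegativity is preserved since $g_k(s)\ge 0$ for $s\ge 0$. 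Finally, the induction transmits equality at $k=0$ into strict inequality at $k=1$ (for the lower bound) and at $k=1$ (for the upper bound, using nondegeneracy of $\rho$), and then ordinary monotonicity carries the strictness forward; no delicate estimate is needed beyond the one-line Jensen argument.
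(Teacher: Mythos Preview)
Your proof is correct and is essentially identical to the paper's: both exploit that $s\mapsto c_k s/(c_k+s)$ is strictly increasing and strictly concave, use Jensen's inequality (with nondegeneracy of $\rho$) for the upper bound $d_k<d^1_k$, use strict monotonicity together with $\mu_k\rho>0$ a.s.\ for the lower bound $d^0_k<d_k$, and propagate the inequalities by induction via monotonicity. The paper packages this by defining three maps $f^0_k,f_k,f^1_k$ and noting $f^0_k(x)<f_k(x)<f^1_k(x)$ pointwise, but the content is the same one-line Jensen/monotonicity argument you give.
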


\subsubsection{Dichotomy for the interaction chain}
\label{sss.coex} 

How are the qualitative properties of the Cannings process for large $N$ reflected in the interaction 
chain? What about the dichotomy clustering versus coexistence? Before answering these questions 
we need to first establish the existence of the entrance law of the interaction chain from level $\infty$, 
which we will obtain from the level $j$ interaction chain as limit $j\to\infty$. With this object, we can 
address the question of coexistence versus clustering.

\begin{proposition}[{\bf Entrance law of interaction chain exists}]\label{p.1267}\mbox{}\\
The limit as $j\to\infty$ of $M^{(j)}_\eta$ exists. \hfill $\square$
\end{proposition}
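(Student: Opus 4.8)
The plan is to show that the sequence of interaction chains $(M^{(j)}_\eta(\omega))_{j\in\N}$ is \emph{consistent} in $j$ in a way that lets us invoke a Kolmogorov-type extension / backward-martingale argument. First I would note the crucial compatibility: the level-$j$ chain $M^{(j)}_\eta(\omega)$ and the level-$(j+1)$ chain $M^{(j+1)}_\eta(\omega)$ both use the \emph{same} transition kernels $K_{\eta,k}(\omega;\cdot,\cdot) = \nu^{c_k,d_k,\Lambda^{\mathrm{MC}_k(\eta)}(\omega)}_\theta(\cdot)$ from time $-(k+1)$ to $-k$ for $k=0,1,\ldots,j$; the only difference is that $M^{(j+1)}_\eta$ is started one step earlier, at time $-(j+2)$, from $\theta$, and then run through the extra kernel $K_{\eta,j+1}$. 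Hence, by the Markov property, $M^{(j)}_\eta(\omega)$ has the same law as $M^{(j+1)}_\eta(\omega)$ conditioned to have been at $\theta$ at time $-(j+1)$ — but more to the point, the law of $(M^{(j+1)}_{\eta,k}(\omega))_{k=-(j+1),\ldots,0}$, i.e. the restriction of the longer chain to the time window of the shorter one, is the law of the level-$j$ chain started not from $\theta$ but from $K_{\eta,j+1}(\omega;\theta,\cdot)$ at time $-(j+1)$. So the two laws differ only in the initial distribution at level $-(j+1)$, and we must show this difference washes out as $j\to\infty$.

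The mechanism that makes it wash out is the \emph{contraction towards the mean} built into each McKean-Vlasov equilibrium: for $c_k>0$ the map $\theta \mapsto \nu^{c_k,d_k,\Lambda^{\mathrm{MC}_k(\eta)}(\omega)}_\theta$ has mean $\theta$ but strictly shrinks the variance, as quantified in \eqref{e1268}: pushing the initial state once through kernel $K_{\eta,j+1}$ and then through $K_{\eta,j},\ldots,K_{\eta,0}$ multiplies variances by $\prod_{k=0}^{j+1} \tfrac{2c_k}{2c_k+\lambda_k\rho_k+2d_k}$, whereas starting from $\theta$ directly gives $\prod_{k=0}^{j}$. Since the mean (single-coordinate measure) is preserved as $\theta$ along every chain, by \eqref{ag4.2}-type identities, and the second moments of $M^{(j)}_{\eta,0}(\omega)$ and of the restriction of $M^{(j+1)}_{\eta,0}(\omega)$ agree up to the extra contraction factor $\tfrac{2c_{j+1}}{2c_{j+1}+\lambda_{j+1}\rho_{j+1}+2d_{j+1}} \in (0,1)$, I would set up a coupling (or simply a direct computation with test functions $F\in\mathcal F$ of the form \eqref{ak:multi-level-test-functions*}) showing that $\|\mathcal L[M^{(j+1)}_{\eta}(\omega)|_{\text{window }j}] - \mathcal L[M^{(j)}_\eta(\omega)]\|$ is controlled by the discrepancy in second moments, which is of order the product $\prod_{k=0}^{j}\tfrac{2c_k}{2c_k+\lambda_k\rho_k+2d_k}$. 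Whether this product tends to $0$ or not, the point is that $M^{(j+1)}$ restricted to the level-$j$ window is a small perturbation of $M^{(j)}$: concretely, one can realize $M^{(j+1)}_{\eta}(\omega)$ as $M^{(j)}_\eta(\omega)$ run from a random start $M^{(j+1)}_{\eta,-(j+1)}(\omega)$ whose law converges (it is an equilibrium of a contraction) and whose mean is always $\theta$; the standard argument (cf. the construction of entrance laws from the left for interacting diffusions, Dawson--Greven) then gives a Cauchy sequence of laws on the path space $\CP(E)^{\{-(j+1),\ldots,0\}}$ — or rather, after re-indexing so all chains end at time $0$, a projective/Cauchy family on $\prod_{k\le 0}\CP(E)$ — and hence a limit.

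More precisely, the cleanest route is: (i) fix a bounded test function $F$ as in \eqref{ak:multi-level-test-functions*} and a finite time window $\{-m,\ldots,0\}$; (ii) for $j\ge m$ express $\bE[F(M^{(j)}_{\eta,-m}(\omega),\ldots,M^{(j)}_{\eta,0}(\omega))]$ by first conditioning on $M^{(j)}_{\eta,-m}(\omega)$ and applying the kernels $K_{\eta,m-1},\ldots,K_{\eta,0}$, so that it equals $\int g(\theta') \,\mu_j(\dd\theta')$ where $g(\theta') = \bE[F(\cdot)\mid M^{(j)}_{\eta,-m}=\theta']$ does \emph{not} depend on $j$ (it is a fixed bounded continuous function of $\theta'$, by Proposition~\ref{prop:McKean-Vlasov-well-posedness}(a) and Feller continuity of the kernels) and $\mu_j = \mathcal L[M^{(j)}_{\eta,-m}(\omega)]$; (iii) observe $\mu_j$ is obtained from $\mu_{j-1}$-type objects by one more application of a variance-contracting, mean-preserving kernel $K_{\eta,m-1}\circ\cdots$ — more to the point, $\mu_{j}$ is the law at time $-m$ of the chain started at $-(j+1)$, so $\mu_{j+1}$ is obtained from $\mu_j$ by prepending kernel $K_{\eta,j+1}$ at the far end, i.e. $(\mu_j)_{j\ge m}$ is itself a ``backward'' orbit; and (iv) show $(\mu_j)_j$ is Cauchy in the topology that pairs with continuous $g$. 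For (iv) I would use that $E$ is compact (so $\CP(E)$ is compact and $\CP(\CP(E))$ is compact) to get subsequential limits, and then use the second-moment bound from \eqref{e1268} together with mean-preservation to pin down the limit uniquely: any two subsequential limits have the same mean $\theta$ and, tested against the quadratic functionals appearing in \eqref{e1268}, the same covariance structure is forced by the exact product formula, and this — combined with the fact that the McKean-Vlasov equilibria are determined by their moments (the duality/moment-computation underlying Theorem~\ref{T.ltbeq}) — gives uniqueness, hence convergence.

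\textbf{Main obstacle.} The delicate point is step (iv): proving genuine convergence of $\mu_j$, not merely tightness. Tightness is free from compactness of $E$; the real content is identifying the limit uniquely, and for that one needs to know that the backward orbit of initial laws under the contraction operators $\mu\mapsto$ (law obtained by prepending $K_{\eta,j+1}$) has a unique limit regardless of the irrelevant ``starting'' point. Here the variance-contraction estimate \eqref{e1268} is the right tool when the product $\prod_k \tfrac{2c_k}{2c_k+\lambda_k\rho_k+2d_k}$ behaves well, but in the clustering regime this product may tend to $0$, in which case the limiting $\mu_\infty$ is concentrated on $\{\delta_u : u\in E\}$ (the mono-type states) and one must show the \emph{way} it is concentrated is unique — i.e. that the limit is $\int_E \delta_{\delta_u}\,\theta(\dd u)$ — which again follows by matching all moments, since mean $\theta$ and vanishing variance force exactly this mixture. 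The one genuinely $\omega$-dependent subtlety is that the kernels carry $\Lambda^{\mathrm{MC}_k(\eta)}(\omega)$, so the estimates are $\omega$-wise; but since everything is done for fixed $\omega$ and only bounded quantities ($0<\tfrac{2c_k}{2c_k+\lambda_k\rho_k+2d_k}<1$) enter, the argument goes through for $\bP$-a.e.\ $\omega$ without needing the tail triviality — that will matter for later theorems, not for mere existence of the entrance law.
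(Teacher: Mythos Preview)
Your approach is aligned with the paper's: both rest on the variance identity \eqref{e1268}/\eqref{e2691} and the argument of Dawson--Greven--Vaillancourt~\cite{DGV95}. The paper's own proof (opening paragraph of Section~\ref{ss.HMFLdicho}) is essentially a two-line citation of \cite{DGV95}, noting that the variance estimate carries over despite the $\omega$-dependence of the kernels; you have supplied considerably more of the scaffolding than the paper does.

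That said, your step (iv) has a genuine gap. You argue that any two subsequential limits of $(\mu_j)_j$ share mean $\theta$ and the same second moments (via the product formula), and then invoke ``McKean-Vlasov equilibria are determined by their moments''. But $\mu_j=\mathcal L[M^{(j)}_{\eta,-m}]$ is \emph{not} a McKean-Vlasov equilibrium---it is a $(j{+}1{-}m)$-fold composition of such kernels---and two probability measures on $\CP(E)$ with matching first and second moments need not agree. What is actually required is convergence of \emph{all} mixed moments $\bE\big[\prod_i \langle M^{(j)}_{\eta,-m},f_i\rangle\big]$, and for this one must use the dual of the interaction chain itself: the discrete-time coalescent on $\N_0$ constructed in the proof of Corollary~\ref{c.1282}, whose partition only coarsens and therefore converges as $j\to\infty$, forcing convergence of every $n$-th moment. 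You gesture at ``duality'' but do not supply this step; without it your uniqueness argument is incomplete. Once that is in place, Stone--Weierstrass on the compact space $\CP(E)$ finishes the job, so the route is salvageable.

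A secondary point: in your ``Main obstacle'' paragraph you assert that when the product in \eqref{e1268} tends to $0$ the limit $\mu_\infty$ is concentrated on $\{\delta_u:u\in E\}$. Read literally against the formula you cite (variance $=$ product $\times\var_\theta(f)$), product $\to 0$ would give variance $\to 0$ and hence $\mu_\infty=\delta_\theta$, not a mixture of Dirac masses. Your physical conclusion is the right one, but it does not follow from the displayed formula; compare the more careful identity \eqref{e2858}. This does not affect mere existence of the limit, but it signals that the second-moment bookkeeping needs to be redone with care before it can carry the weight you place on it.
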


The object corresponding to the equilibrium of the stochastic system for finite $N$ in the 
hierarchical mean-field limit $N \to \infty$ is the {\em field of entrance laws} of the interaction 
chain from level $\infty$ (recall Remark~\ref{r.1261}), in particular, its marginal law $\Pi_\eta
\nu_\theta(\omega)$ at level $0$ in $\eta$, which is element of  $\CP(\CP(E)$. 

\begin{definition}[{\bf Entrance law of interaction chain}]
\label{p:interaction-chain-equilibrium}
For $\bP$-a.e.\ $\omega$ and all $\eta \in \Omega_\infty$,
\be{hierarch-mean-field-equil}
\lim_{j\to\infty} \mathcal{L}\left[M^{(j)}_{\eta, 0}(\omega)\right] 
= \Pi_\eta \nu_\theta(\omega),
\ee
where $\nu_\theta (\omega) \in \mathcal{P}(\mathcal{P}(E))^{\Omega_\infty}$ is the entrance law from 
level $\infty$ of the (tree-indexed) interaction chain at level $0$, and $\Pi_\eta\nu_\theta(\omega)$ 
denotes the projection of $\nu_\theta(\omega)$ on $\eta$. 
\hfill $\square$
\end{definition}

Our next theorem is indeed the analogue of Theorem~\ref{T.coexcritNfin} for $N \to \infty$. In this 
limit, coexistence and clustering in $\omega$ are defined for $(M^{(\infty)}_{\eta,0})_{\eta\in\Omega_N}$ 
in the same way as in \eqref{ag4.8}--\eqref{ag4.6}. 

\begin{theorem}[{\bf Dichotomy for $N=\infty$}]
\label{T.dicho}
$\mbox{}$
\begin{itemize}
\item[\textup{(a)}]
Let $\CC=\{\omega\colon\, \text{in $\omega$ coexistence occurs}\}$. 
Then $\bP(\CC) \in \{0,1\}$.
\item[\textup{(b)}] 
$\bP(\CC)=1$ if and only if
\be{inftydich}
\sum_{k\in\N_0} \frac{1}{c_k} \sum_{l=0}^k \lambda_l<\infty. 
\ee
\end{itemize} 
\hfill $\square$
\end{theorem}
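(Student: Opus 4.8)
\textbf{Proof plan for Theorem~\ref{T.dicho}.}
The strategy is to reduce the dichotomy for $N=\infty$ to the asymptotic behaviour of the variance product in \eqref{e1268}, exactly as the finite-$N$ dichotomy (Theorem~\ref{T.coexcritNfin}) is governed by the analogous sum \eqref{Ndich}. First I would recall from Proposition~\ref{p.1267} and Definition~\ref{p:interaction-chain-equilibrium} that the entrance law $\Pi_\eta\nu_\theta(\omega)$ exists as the $j\to\infty$ limit of $\CL[M^{(j)}_{\eta,0}(\omega)]$. Clustering in $\omega$ (mono-type equilibrium) is equivalent to $\Pi_\eta\nu_\theta(\omega)=\int_E\delta_{\delta_u}\,\theta(\dd u)$, which by the duality/second-moment computation is equivalent to
\be{e.varvanish}
\lim_{j\to\infty}\var\langle M^{(j)}_{\eta,0},f\rangle
=\lim_{j\to\infty}\left(\prod_{k=0}^{j}\frac{2c_k}{2c_k+\lambda_k\rho_k+2d_k}\right)\var_\theta(f)=0
\qquad\text{for all }f\in C_{\mathrm b}(E).
\ee
Here $\rho_k=\rho^{\mathrm{MC}_k(\eta)}(\omega)$, and the product in \eqref{e.varvanish} is the $N\to\infty$ analogue of the finite-$N$ variance product; in the finite-$N$ case the extra term $N^{-1}\lambda_{k+1}$ in \eqref{Ndich} arises from migration out of a $k$-block into the surrounding $(k+1)$-block, and this term disappears in the limit $N\to\infty$, which is why \eqref{inftydich} has no such correction.

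\medskip
Second, I would show that the vanishing of the infinite product in \eqref{e.varvanish} is $\bP$-a.s.\ equivalent to the divergence of $\sum_k (\lambda_k\rho_k+2d_k)/c_k$. The elementary fact $\prod_k(1+a_k)^{-1}\to 0 \iff \sum_k a_k=\infty$ (for $a_k\geq0$) applied with $a_k=(\lambda_k\rho_k+2d_k)/(2c_k)$ reduces matters to a Borel--Cantelli / Kolmogorov three-series type statement. Because $(\rho_k)_{k\in\N_0}$ along the vertical ray $(\mathrm{MC}_k(\eta))_{k\in\N_0}$ is stationary with $\bE[\rho_k]=1$ and trivial tail sigma-algebra \eqref{fdh:tail}, one gets $\sum_k \lambda_k\rho_k/c_k<\infty$ $\bP$-a.s.\ iff $\sum_k\lambda_k/c_k<\infty$ (if $\sum_k\lambda_k/c_k=\infty$, then $\sum_k\lambda_k\rho_k/c_k=\infty$ $\bP$-a.s.\ by a conditional Borel--Cantelli argument using that $\rho_k$ is bounded below away from $0$ only under \eqref{Apropalt}; without that, one still controls it because $\bE[\rho_k]=1$ forces $\rho_k$ not to be too small too often, and the tail-triviality upgrades the a.s.\ statement to probability $0$ or $1$, giving part~(a)). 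Separately, by Theorem~\ref{T.order} we have $d_k^0<d_k<d_k^1$, so $\sum_k d_k/c_k$ is comparable to $\sum_k d_k^1/c_k$, and in the \emph{homogeneous} (average-environment) case the volatility recursion \eqref{diffusion-constants} with $\rho\equiv1$ is the one already analysed in \cite{GHKK14}, where one knows that $\sum_k d_k^1/c_k<\infty$ iff $\sum_k(\sum_{l=0}^k\lambda_l)/c_k<\infty$. Combining, $\sum_k(\lambda_k\rho_k+2d_k)/c_k<\infty$ $\bP$-a.s.\ iff $\sum_k(\sum_{l=0}^k\lambda_l)/c_k<\infty$, which is \eqref{inftydich}; and the complementary event has probability one in the other case, establishing both the zero-one law (a) and the criterion (b).

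\medskip
The \textbf{main obstacle} is the interplay between the \emph{random} term $\sum_k\lambda_k\rho_k/c_k$ and the \emph{deterministic but recursively defined} term $\sum_k d_k/c_k$: one must verify that the self-averaged volatilities $d_k$ cannot conspire with the environment to change the convergence class. The clean way around this is to note that $d_k$ is \emph{deterministic} (it depends only on $\CL_\rho$, not on $\omega$), so the only randomness in \eqref{e.varvanish} sits in the $\rho_k$; the tail-triviality of \eqref{fdh:tail} then immediately gives that $\{\omega:\prod_k(\cdots)^{-1}=0\}$ has probability $0$ or $1$ (it is a tail event in the $\rho_k$'s), which is part~(a). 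For part~(b) the remaining work is the two-sided comparison $\sum_k d_k/c_k \asymp \sum_k(\sum_{l=0}^k\lambda_l)/c_k$, which follows by plugging the bounds $d_k^0<d_k<d_k^1$ from Theorem~\ref{T.order} into the homogeneous-case analysis of \cite{GHKK14}; one should be slightly careful that the lower bound $d_k^0$ (zero environment, pure drift-diffusion) still produces the same growth class, which is the content of the classical Fleming--Viot renormalization dichotomy and is where I would cite the relevant lemma from \cite{GHKK14} rather than redo the estimate.
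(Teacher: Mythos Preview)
Your overall strategy is correct and matches the paper's proof in Section~\ref{ss.HMFLdicho}: both reduce the dichotomy to whether the variance product \eqref{e1268} (written out in \eqref{e2691}) vanishes as $j\to\infty$, take logarithms to turn this into the question whether $\sum_{k}(\mu_k\rho_k(\omega)+d_k)/c_k$ converges, and use tail-triviality of $(\rho_k)_{k\in\N_0}$ for the zero-one law. Your observation that the $d_k$ are \emph{deterministic} (functions of $\CL_\rho$, not of $\omega$) so that the only randomness sits in the $\rho_k$'s is exactly the point that makes part~(a) immediate.

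Where you differ from the paper is in the passage from the random sum to the deterministic criterion. The paper does this in two steps: a second-moment estimate (the variance of the random sum is controlled by $\sum_k(\mu_k/c_k)^2$, which is bounded by the square of the mean) shows that $\sum_k(\mu_k\rho_k(\omega)+d_k)/c_k<\infty$ $\bP$-a.s.\ iff $\sum_k(\mu_k+d_k)/c_k<\infty$; then it cites \cite[Theorem~3.7(c)]{GHKK14} for the equivalence of the latter with \eqref{inftydich}. Your route via the sandwich of Theorem~\ref{T.order} has a genuine gap on the \emph{lower} side. Since $d_k^0=[1/d_0+\sigma_k]^{-1}$, one has $\sum_k d_k^0/c_k<\infty$ iff $\sum_k 1/c_k<\infty$, and this is \emph{not} the divergence half of \eqref{inftydich}. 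Concretely, take $c_k=(k+1)^2$ and $\lambda_k\equiv 1$: then $\sum_k 1/c_k<\infty$ and $\sum_k\lambda_k/c_k<\infty$, yet $\sum_k c_k^{-1}\sum_{l\le k}\lambda_l=\infty$; here $d_k^0$ stays bounded so your lower bound gives nothing, whereas in fact $d_k\sim k/2$ and $\sum_k d_k/c_k=\infty$. The ``classical Fleming--Viot renormalization dichotomy'' you propose to cite is precisely the statement about $\sum_k 1/c_k$, so it does not fill the gap.

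The clean fix is not the sandwich but the observation that the equivalence in \cite{GHKK14} relies only on the two-sided estimates $d_{k+1}\le\mu_k+d_k$ (hence $d_k\le d_0+\tfrac12\Lambda_{k-1}$) and $1/d_{k+1}\le 1/c_k+1/d_k$, both of which hold verbatim for the random-environment recursion \eqref{diffusion-constants}: the first by $\bE_{\CL_\rho}[\rho]=1$, the second because $x\mapsto c_kx/(c_k+x)$ is increasing and $\rho\ge 0$. Thus the citation of \cite[Theorem~3.7(c)]{GHKK14} applies directly to the $d_k$ here, and Theorem~\ref{T.order} is not needed for this step.
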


\noindent
Note that condition \eqref{inftydich} is the limit of condition \eqref{Ndich} as $N\to\infty$. 
In fact, the two conditions are equivalent when the following \emph{weak regularity condition} 
holds:
\be{dichreg}
\text{ either } \quad \limsup_{k\to\infty} \frac{\lambda_{k+1}}{c_k} < \infty \quad \text{ or } \quad 
\liminf_{k\to\infty} \left(\frac{\lambda_{k+1}}{c_k} \wedge \frac{\lambda_k}{\lambda_{k+1}}\right)>0.
\ee

An important question is whether the equilibrium measure $\nu_\theta (\omega)$ is the limit 
as $N\to\infty$ of the equilibrium measure  $\nu_\theta^N (\omega)$ (recall \eqref{ag4.9}). The 
answer is yes. We only prove the following.

\begin{corollary}[{\bf Hierarchical mean field limit of equilibrium}]
\label{c.1282}
$\mbox{}$\\
For $ \bP$-a.s.\ all $\omega$ and all $\eta \in \Omega_\infty$,
\begin{equation}\label{e1309}
\lim_{N\to\infty} \Pi_\eta \nu^N_\theta = \Pi_\eta \nu_\theta.
\end{equation} 
\hfill $\square$
\end{corollary}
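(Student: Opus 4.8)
\textbf{Proof proposal for Corollary~\ref{c.1282}.}
The plan is to identify $\Pi_\eta\nu^N_\theta$ as the $t\to\infty$ limit of the one-dimensional marginal $\CL[Y^{(\Omega_N)}_{\eta,0}(\omega;t)]$ obtained from Theorem~\ref{T.ltbeq}, and to show that this limit is well-approximated, for large $N$, by the interaction chain entrance law $\Pi_\eta\nu_\theta(\omega)$ supplied by Proposition~\ref{p.1267} and Definition~\ref{p:interaction-chain-equilibrium}. The natural bridge is a diagonal/interchange-of-limits argument: by Theorem~\ref{T.InteractionChain}, for fixed $j$ and a time scaling $t_N$ with $t_N\to\infty$, $t_N/N\to0$, the block-average vector $(Y^{(\Omega_N)}_{\eta,k}(\omega;t_N N^k))_{k=-(j+1),\dots,0}$ converges as $N\to\infty$ to the level-$j$ interaction chain $M^{(j)}_\eta(\omega)$, whose top marginal converges as $j\to\infty$ to $\Pi_\eta\nu_\theta(\omega)$. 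So the heart of the matter is to control the difference between $\Pi_\eta\nu^N_\theta$ (the genuine $t\to\infty$ equilibrium for finite $N$) and $\CL[M^{(j)}_{\eta,0}(\omega)]$ uniformly enough in $N$ and $j$ to let both limits be taken.

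First I would set up a \emph{moment comparison}. Both $\nu^N_\theta(\omega)$ and the interaction-chain laws are characterized through their moments via the dual process (coalescing hierarchical random walks with block coalescence in random environment, as in the proof of Theorem~\ref{T.ltbeq}); since $E$ is compact, weak convergence of $\Pi_\eta\nu^N_\theta$ is equivalent to convergence of all mixed moments $\int (\bigotimes_{m=1}^n x_\eta(\dd u^m))\,f(u^1,\dots,u^n)\,\Pi_\eta\nu^N_\theta(\dd x)$. These moments are expressed via the dual as expectations over the genealogy of $n$ lineages started at $\eta$; the $t\to\infty$ equilibrium corresponds to running the dual coalescent to its final (absorbed) configuration. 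In the hierarchical mean-field limit $N\to\infty$, the dual coalescent lineages, after a sequence of time-scale separations, pass through the successive block scales $k=0,1,2,\dots$, and the finite-$N$ dual converges to the McKean–Vlasov dual that underlies the interaction chain — this is exactly the mechanism already used to prove Theorems~\ref{mainth} and~\ref{T.InteractionChain}. So the strategy is: (i) write the $n$-th moment of $\Pi_\eta\nu^N_\theta$ as a dual expectation; (ii) truncate the dual at block-level $j$, incurring an error controlled by the probability that lineages have not yet coalesced or escaped above level $j$ — this error is estimated via the dichotomy analysis of Section~\ref{s.randomwalk} and is uniform in $N$; (iii) pass $N\to\infty$ with $j$ fixed, using Theorem~\ref{T.InteractionChain}, to get the $n$-th moment of $\CL[M^{(j)}_{\eta,0}(\omega)]$; (iv) let $j\to\infty$ and invoke Proposition~\ref{p.1267}/Definition~\ref{p:interaction-chain-equilibrium}.

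The main obstacle is step (ii): obtaining the truncation error \emph{uniformly in $N$}. One must show that, for $\bP$-a.e.\ $\omega$, the contribution to the equilibrium moments from dual lineages that wander above hierarchical level $j$ before coalescing is small, with a bound independent of $N$. In the homogeneous case this follows from standard estimates on hierarchical random walks; in random environment the subtlety flagged after Theorem~\ref{T.ltbeq} reappears — the difference of two lineages in the \emph{same} environment is not itself a random walk, so one cannot directly use a single-walk Green's function. I would handle this by working with the pair of lineages jointly, using the second-moment bound $\bE[(\rho^\xi(\omega))^2]=C<\infty$ from \eqref{Aprop} (and, where needed for almost-sure statements, the uniform ellipticity \eqref{Apropalt}) to control the quenched coalescence/escape rates level by level, together with the triviality of the tail $\sigma$-algebra \eqref{fdh:tail} to upgrade annealed estimates to $\bP$-a.s.\ statements. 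A Borel–Cantelli argument over levels $j$, exploiting \eqref{ak:recurrence-cond} and \eqref{ak:lambda-growth-condition} to bound the per-level rates, then yields the required uniform tail control. Once step (ii) is in place, steps (i), (iii), (iv) are routine given the already-established Theorems~\ref{T.ltbeq}, \ref{T.InteractionChain} and Proposition~\ref{p.1267}, and the convergence $\lim_{N\to\infty}\Pi_\eta\nu^N_\theta=\Pi_\eta\nu_\theta$ follows for $\bP$-a.e.\ $\omega$ and all $\eta\in\Omega_\infty$.
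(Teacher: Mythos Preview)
Your proposal is essentially correct and follows the same overarching strategy as the paper: represent the moments of both $\Pi_\eta\nu^N_\theta$ and $\Pi_\eta\nu_\theta$ via the dual coalescent, and show that the dual representation of the former converges to that of the latter as $N\to\infty$. The paper explicitly says as much at the start of its proof.

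Where you and the paper differ is in the \emph{mechanism} for interchanging the limits $t\to\infty$ and $N\to\infty$. You propose to truncate the dual at block level $j$, establish a uniform-in-$N$ error bound for the truncation (your step (ii)), and then pass first $N\to\infty$, then $j\to\infty$. You correctly identify (ii) as the main obstacle and sketch a plausible route through second-moment control and tail triviality. The paper avoids this obstacle entirely by exploiting more structure of the dual: since the partition component of the coalescent is monotone (partitions only coarsen), the $t\to\infty$ limit $\Pi^N_\infty$ exists automatically for each $N$, with no error estimate required. The paper then passes to the \emph{embedded} Markov chain indexed by the successive times at which the coalescent first reaches a larger block; this embedded chain takes values in $\N_0\times\{\text{partitions}\}$, and its convergence as $N\to\infty$ to the dual of the interaction chain follows from the separation of time scales (each block-level transition becomes, in the limit, a step of the McKean-Vlasov dual: jump to cemetery at rate $c_k$, Kingman coalescence at rate $d_k$, $\Lambda$-coalescence according to $\Lambda^{\mathrm{MC}_k(0)}(\omega)$). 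The partition monotonicity again gives convergence of the embedded chain as the level index $k\to\infty$, yielding $\Pi^\infty_\infty$. So the paper's route is: $t\to\infty$ first (free by monotonicity), then $N\to\infty$ on the embedded chain, then $k\to\infty$ (again free by monotonicity). Your route would also work, but the uniform truncation estimate you need in (ii) is genuine extra labour that the paper's embedded-chain argument sidesteps.
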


\subsubsection{Scaling of the volatility}
\label{sss.vola}

We are interested in the behaviour of the variance of the interaction chain $M^{(j)}$ as $j\to\infty$, 
since this allows us to identify {\em universality classes} for the scaling behaviour of our stochastic 
system. From the variance formula, we see that $(d_k)_{k \in \N_0}$ is the key input, and so we 
study this sequence first. Note that the variance formula really depends on the ratios $d_k/c_k, 
\mu_k/d_k$, which we encounter below.

Our next two theorems identify the scaling behaviour of $d_k$ as $k\to\infty$ \emph{in the regime 
of clustering}. The first theorem considers the case of \emph{polynomial coefficients}, i.e.,
\be{fdh:regcond}
c_k \sim L_c(k)\,k^a, \qquad \mu_k \sim L_\mu(k)\,k^b, \qquad k \to \infty, 
\ee
with $a,b \in \R$ and $L_c,L_\mu$ slowly varying at infinity. 
In what follows, we assume that
\be{Kdef}
K = \lim_{k\to\infty} \frac{\mu_k}{c_k} \in [0,\infty], \qquad 
L = \lim_{k\to\infty} \frac{k^2\mu_k}{c_k} \in [0,\infty], 
\ee 
exist and write $ K_k $ and $ L_k $ for the respective sequences. There are \emph{five cases} according 
to the values of $K$ and $L$. Four of these, labelled (a)--(d), we can analyze in detail. For the remaining 
case, labelled (e), see Remark~\eqref{r:notexhaus}. For cases (c)--(d), we need extra regularity conditions 
on $L_c,L_\mu$ in \eqref{fdh:regcond}, for which we refer the reader to \cite[Eqs.~(1.79)--(1.81)]{GHKK14}.

\begin{theorem}[{\bf Scaling of the Fleming-Viot volatility: polynomial coefficients}]
\label{T.scaleFVpol}
$\mbox{}$\\
Under the polynomial scaling assumptions \eqref{fdh:regcond}--\eqref{Kdef}, the following cases apply:
\begin{itemize}
\item[\textup{(a)}]
If $K = \infty$, then $\lim_{k\to\infty} d_k/c_k = 1$.
\item[\textup{(b)}] 
If $K \in (0,\infty)$, then $\lim_{k\to\infty} d_k/c_k = M$ with $M \in (0,1)$ the unique solution of the 
equation
\be{MK}
M = \bE_{\CL_\rho}\left[\frac{(K\rho+M)}{1+(K\rho+M)}\right].
\ee
\item[\textup{(c)}]
If $K = 0$ and $L = \infty$, then $\lim_{k\to\infty} d_k/\sqrt{c_k\mu_k} = 1$.
\item[\textup{(d)}] 
If $K=0$, $L \in [0,\infty)$ and $a \in (-\infty,1)$, then $\lim_{k\to\infty} \sigma_kd_k = M$ with 
$\sigma_k = \sum_{l=0}^{k-1} (1/c_l)$ and $M \in [1,\infty)$ given by
\be{M*K}
M = \tfrac12\left[1+\sqrt{1+4L/(1-a)^2}\right].
\ee 
\end{itemize}
\hfill $\square$
\end{theorem}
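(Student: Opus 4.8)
\textbf{Proof proposal for Theorem~\ref{T.scaleFVpol}.} The plan is to analyse the nonlinear recursion \eqref{diffusion-constants} directly, rescaling $d_k$ by the natural reference sequence in each of the four cases and showing that the rescaled quantities converge to a fixed point of a limiting Möbius-type average. Throughout, write $\Psi_k(d) = \bE_{\CL_\rho}[c_k(\mu_k\rho+d)/(c_k+\mu_k\rho+d)]$, so that $d_{k+1}=\Psi_k(d_k)$. The key structural facts, to be established first, are: (i) $d\mapsto \Psi_k(d)$ is monotone increasing, concave, and a strict contraction into $[0,c_k)$; (ii) $\Psi_k(d)\le c_k$ always, and $\Psi_k(d)\ge \bE_{\CL_\rho}[c_k\mu_k\rho/(c_k+\mu_k\rho+d)]$; (iii) comparison/sandwich bounds between consecutive terms. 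These follow from elementary manipulations of the Möbius transformation $x\mapsto c(a+x)/(c+a+x)$ and Jensen's inequality (the concavity in $\rho$ giving, e.g., $\Psi_k(d)\le c_k(\mu_k+d)/(c_k+\mu_k+d)$), and they are the workhorses for every case.

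\emph{Case (a), $K=\infty$.} Here $\mu_k/c_k\to\infty$, so the "driving" term dominates. Set $u_k = d_k/c_k$. Then $u_{k+1} = \bE_{\CL_\rho}[(\tfrac{\mu_k}{c_k}\rho+u_k)/(1+\tfrac{\mu_k}{c_k}\rho+u_k)]$. Since $u_k\in[0,1)$ and the integrand tends to $1$ pointwise in $\rho$ as $\mu_k/c_k\to\infty$ (using $\rho>0$ a.s.\ by \eqref{Aprop} and dominated convergence), one shows $\liminf u_k\to 1$; combined with $u_k<1$ this forces $u_k\to 1$. A minor technicality is to handle the feedback of $u_k$ itself, which is resolved because the map is increasing in $u_k$ and bounded by $1$, so a lower bound obtained by replacing $u_k$ with $0$ already tends to $1$.

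\emph{Case (b), $K\in(0,\infty)$.} Again work with $u_k=d_k/c_k$, now writing $u_{k+1}=\bE_{\CL_\rho}[(K_k\rho+u_k)/(1+(K_k\rho+u_k))]$ with $K_k\to K$. The limiting map $g(u)=\bE_{\CL_\rho}[(K\rho+u)/(1+K\rho+u)]$ is an increasing strict contraction on $[0,1]$ with a unique fixed point $M\in(0,1)$, which is exactly equation \eqref{MK}; existence and uniqueness of $M$ follow from $g(0)>0$, $g(1)<1$, and $0<g'<1$. One then shows $u_k\to M$ by a standard contraction-with-vanishing-perturbation argument: $|u_{k+1}-M|\le |g(u_k)-g(M)| + |\bE[(K_k\rho+u_k)/(1+K_k\rho+u_k)] - g(u_k)|\le \gamma|u_k-M| + \varepsilon_k$ with $\gamma<1$ and $\varepsilon_k\to 0$ (the second term controlled by $|K_k-K|\cdot\bE[\rho]$ since the Möbius map is Lipschitz in its parameter), whence $u_k\to M$.

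\emph{Cases (c) and (d), $K=0$.} Now $d_k$ is small relative to $c_k$, and the relevant behaviour comes from the subleading balance. Linearising $\Psi_k$ for small arguments gives $d_{k+1}\approx \bE_{\CL_\rho}[\mu_k\rho + d_k] - \tfrac{1}{c_k}\bE_{\CL_\rho}[(\mu_k\rho+d_k)^2] + \cdots = \mu_k + d_k - \tfrac{1}{c_k}(C\mu_k^2 + 2\mu_k d_k + d_k^2) + O(c_k^{-2})$, using $\bE[\rho]=1$, $\bE[\rho^2]=C$. In case (c), where $L=k^2\mu_k/c_k\to\infty$, the candidate scale is $\sqrt{c_k\mu_k}$; substituting $d_k = v_k\sqrt{c_k\mu_k}$ and using the regularity of $c_k,\mu_k$ (so that $\sqrt{c_{k+1}\mu_{k+1}}/\sqrt{c_k\mu_k}\to 1$) reduces the recursion to $v_{k+1}\approx v_k + \sqrt{\mu_k/c_k}(1 - v_k^2) + \cdots$; since $\sqrt{\mu_k/c_k}\to 0$ but $\sum\sqrt{\mu_k/c_k}=\infty$ (because $L=\infty$ forces $\mu_k/c_k \gg k^{-2}$), this is a discrete logistic-type equation whose solution is attracted to the stable fixed point $v=1$. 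In case (d), where $L\in[0,\infty)$ and $a<1$, the relevant scale is $1/\sigma_k$ with $\sigma_k=\sum_{l<k}1/c_l\to\infty$; set $d_k = w_k/\sigma_k$ and note $\sigma_{k+1}-\sigma_k = 1/c_k$, $\sigma_{k+1}/\sigma_k\to 1$. The recursion becomes, after multiplying by $\sigma_{k+1}$ and using $d_{k+1}\approx \mu_k + d_k - (C\mu_k^2 + 2\mu_k d_k + d_k^2)/c_k$, an equation of the approximate form $w_{k+1} - w_k \approx \tfrac{1}{c_k\sigma_k}\bigl(\sigma_k^2\mu_k c_k \cdot (\text{const}) + w_k - w_k^2 + \cdots\bigr)$; here $\sigma_k^2 \mu_k c_k/c_k^2 \to L/(1-a)^2$ because $\sigma_k \sim k/((1-a)c_k)$ by the polynomial form $c_k\sim L_c(k)k^a$ with $a<1$ (Karamata), and the extra regularity conditions on $L_c,L_\mu$ ensure the error terms are summable against $1/(c_k\sigma_k)$. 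The limiting ODE is $\dot w = L/(1-a)^2 + w - w^2$ up to reparametrisation, whose stable equilibrium is the larger root $M = \tfrac12[1+\sqrt{1+4L/(1-a)^2}\,]$ of $M^2 - M - L/(1-a)^2 = 0$, matching \eqref{M*K}.

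\emph{Main obstacle.} The hardest part is cases (c) and (d): one must turn the exact nonlinear recursion into the asymptotic logistic/Riccati form with \emph{rigorous control of the error terms}, which is where the extra regularity hypotheses on $L_c,L_\mu$ (the conditions (1.79)--(1.81) of \cite{GHKK14}) enter. Concretely, the difficulty is a two-sided argument: an upper bound via Jensen (replacing $\rho$ by its mean, which is clean) and a matching lower bound via the second-moment term $\bE[\rho^2]=C$ and a Taylor remainder estimate that must be shown to be negligible on the relevant scale. I would handle this by the standard device of comparing $(d_k)$ with explicitly solvable super- and sub-solutions of the recursion and invoking a discrete Gronwall/attractivity lemma; the self-averaging in $\rho$ is actually benign here because only the first two moments of $\rho$ survive in the limit, and these are fixed by \eqref{Aprop}. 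The remaining case (e) is deferred to Remark~\ref{r:notexhaus} as stated.
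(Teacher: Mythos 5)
Your cases (a) and (b) follow essentially the same route as the paper: a sandwich between $g_k(0)$ and $1$ for (a), and a contraction-plus-vanishing-perturbation argument around the fixed point of the limiting map $g$ for (b). (The paper normalizes by $c_{k-1}$ rather than $c_k$ so that the recursion for $\mho_k=d_k/c_{k-1}$ is exact, and it obtains the contraction constant $\beta=1-g(0)/M$ from concavity via a chord comparison rather than from $\sup g'<1$; both devices work.) For cases (c) and (d), however, you take a genuinely different and substantially heavier route. The paper does \emph{not} redo the Riccati/logistic asymptotics in the random environment: it proves an exact reduction (Lemma~\ref{lem:scal1}) showing that the random recursion $d_{k+1}=f_k(d_k)$ coincides with the \emph{homogeneous} recursion in which $\mu_k$ is replaced by $\mu_k r_k$, with $r_k=N_k/D_k$ an explicit ratio of expectations depending on $d_k$ itself, then shows $r_k\to 1$ when $K=0$ (Lemma~\ref{lem:scal2}), and finally quotes the homogeneous scaling results of \cite{GHKK14} together with a regular-variation argument ensuring that the perturbed sequence $(\mu_kr_k)$ still satisfies the hypotheses there. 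Your approach instead expands $\Psi_k$ to second order and analyses the resulting discrete logistic equation from scratch; this is consistent (your computation correctly shows that the second moment $C=\bE[\rho^2]$ enters only in subleading terms, which is the analogue of the paper's $r_k\to1$ and explains why the limits agree with the average environment), but completing it rigorously amounts to reproving the delicate homogeneous analysis of \cite[Section~11.3]{GHKK14} rather than reducing to it — including a two-sided remainder control for $\bE[(\mu_k\rho+d_k)^2/(c_k+\mu_k\rho+d_k)]$ that must be done with only two moments of $\rho$ available (a crude third-order Taylor bound would require $\bE[\rho^3]<\infty$, which is not assumed; one has to argue via dominated convergence instead). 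The paper's reduction buys brevity and a clean separation of the randomness from the asymptotics; your route is more self-contained but leaves the hardest analytic step as a (correctly identified) sketch.
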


\begin{remark}\label{r:notexhaus}
{\rm It is straightforward to check with the help of \eqref{fdh:regcond}--\eqref{Kdef} that all four 
cases {\rm (a)-(d)} correspond to choices of $\uc$ and $\ul$ for which clustering holds, i.e., the 
sum in \eqref{inftydich} diverges (note that $\lim_{k\to\infty} \sigma_k=\infty$ in case {\rm (d)}).
However, they are not exhaustive: there is a  fifth case (e), corresponding to $K=0$, $L \in [0,\infty)$, 
$a=1$ and $\lim_{k\to\infty} \sigma_k=\infty$, for which we have no scaling result. This case lies 
at the border of the clustering regime. An example is $c_k \sim k (\log k)^\gamma$, $\gamma 
\in (-\infty,1]$, and $\mu_k = k^{-2} c_k$, which we were able to handle in the deterministic model 
in \cite{GHKK14}, but cannot handle in the random model treated here.} \hfill $\square$
\end{remark}

The second theorem considers the case of \emph{exponential coefficients}, i.e., 
\be{expregvar}
c_k = c^k\bar{c}_k, \qquad \mu_k= \mu^k\bar{\mu}_k
\ee
with $c,\mu \in (0,\infty)$ and $\bar{c}_k,\bar{\mu}_k$ satisfying \eqref{fdh:regcond} with 
exponents $a,b$. We further assume that
\be{barK}
\bar{K} = \lim_{k \to \infty} \frac{\bar{\mu}_k}{\bar{c}_k} \in [0, \infty], \mbox{  we write  } 
\bar K_k = \frac{\bar \mu_k}{\bar c_k}.
\ee 
exists.

\begin{theorem}[{\bf Scaling of the Fleming-Viot volatility: exponential coefficients}] 
\label{T.scaleFVexp}
$\mbox{}$\\
Under the exponential scaling assumptions in \eqref{expregvar}--\eqref{barK}, the following cases 
apply (cf.\ \textup{Theorem~\ref{T.scaleFVpol}}):
\begin{itemize}
\item[\textup{(A)}]
\textup{[Like Case~(a)]}
$c<\mu$, or $c=\mu$ and $\bar{K}=\infty$: $\lim_{k\to\infty} d_k/c_k=1/c$.
\item[\textup{(B)}]
\textup{[Like Case~(b)]}
$c=\mu$ and $\bar{K} \in (0,\infty)$: $\lim_{k\to\infty} d_k/c_k = \bar{M}/c$ with $\bar{M} \in (0,1)$ 
the unique solution of the equation
\be{Mexp}
\bar{M} = \bE_{\CL_\rho}\left[\frac{(cK\rho+\bar{M})}{c+(cK\rho+\bar{M})}\right].
\ee
\item[\textup{(C)}]
The case $\bar{K}=0$, with $c=\mu$ or $c>\mu$, splits into three sub-cases:
\begin{itemize}
\item[\textup{(C1)}]
\textup{[Like Case~(b)]}
$c=\mu<1$, $\bar{K}=0$: $\lim_{k\to\infty} d_k/c_k=(1-c)/c$.
\item[\textup{(C2)}]
\textup{[Like Case \textup{(c)}]}
$c=\mu>1$, $\bar{K}=0$, $\sum_{k\in\N_0} \bar{K}_k = \infty$:
\footnote{In \cite{GHKK14} the condition $\sum_{k\in\N_0} \bar{K}_k = \infty$ was mistakenly omitted.}
$\lim_{k\to\infty} d_k/\mu_k 
= 1/(\mu-1)$. 
\item[\textup{(C3)}]
\textup{[Like Case~(d)]}
$1>c>\mu$ and $\bar{K}=0$, or $1=c>\mu$, $\bar{K}=0$ and $a \in (-\infty,1)$: 
$\lim_{k\to\infty} \sigma_k d_k = 1$. 
\end{itemize}
\end{itemize} 
\hfill $\square$
\end{theorem}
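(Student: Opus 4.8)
The plan is to treat Theorem~\ref{T.scaleFVexp} as a purely analytic statement about the scalar recursion \eqref{diffusion-constants} and, wherever possible, to reduce the exponential case to the analysis underlying Theorem~\ref{T.scaleFVpol}. The first step is the substitution $\tilde d_k=d_k/c^k$, which transforms \eqref{diffusion-constants} into
\[
\tilde d_{k+1}=\frac1c\,\bE_{\CL_\rho}\!\left[\frac{\bar c_k\,(\tilde\mu_k\rho+\tilde d_k)}{\bar c_k+\tilde\mu_k\rho+\tilde d_k}\right],\qquad \tilde\mu_k=(\mu/c)^k\,\bar\mu_k,
\]
again an average of a random M\"obius transformation, now with the \emph{polynomial} coefficients $\bar c_k$, with $\tilde\mu_k$ polynomial when $\mu=c$, exponentially growing when $\mu<c$ and exponentially decaying when $\mu>c$, and with the extra prefactor $1/c$ --- the source of the $1/c$, $\bar M/c$, $(1-c)/c$ in the statement. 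In each of the five regimes one then passes to the normalisation in which the recursion becomes asymptotically autonomous and analyses the limiting fixed-point equation, using repeatedly that $x\mapsto cx/(c+x)$ is increasing and concave, so that $\bE_{\CL_\rho}[\cdot]$ preserves monotonicity and the limiting maps $g$ are increasing, bounded and concave --- hence, when $g(0)>0$, have a unique globally attracting fixed point.

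\emph{Cases (A), (B), (C1): normalisation $e_k=d_k/c_k$.} In Case~(A), $\mu_k/c_k\to\infty$, so the bracket in the recursion for $e_k$ tends to $1$ by dominated convergence (integrand $\le1$, $\to1$ a.s.\ since $\rho>0$ a.s.), while $c_k/c_{k+1}\to1/c$ by regular variation of $\bar c_k$; since $e_{k+1}\le c_k/c_{k+1}$ the sequence is bounded and $e_k\to1/c$. In Cases~(B),~(C1), $\mu_k/c_k=\bar K_k\to\bar K\in[0,\infty)$, and the recursion for $e_k$ is a vanishing perturbation (via $\bar c_k/\bar c_{k+1}\to1$ and $\bar K_k\to\bar K$) of $e_{k+1}=g(e_k)$ with $g(e)=\tfrac1c\,\bE_{\CL_\rho}[(\bar K\rho+e)/(1+\bar K\rho+e)]$. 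For $\bar K>0$ one has $g(0)>0$, so $g$ has a unique attracting fixed point, which after unravelling equals $\bar M/c$ with $\bar M\in(0,1)$ the solution of \eqref{Mexp}. For $\bar K=0$ the fixed points of $g$ are $0$ and $(1-c)/c$; in Case~(C1), $c<1$ so $g'(0)=1/c>1$, the iterates are repelled from $0$ and, by concavity, converge to $(1-c)/c$. A standard argument transferring convergence from an asymptotically autonomous recursion to its autonomous limit closes these cases.

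\emph{Cases (C2), (C3).} In Case~(C2) ($c=\mu>1$, $\bar K=0$) the right variable is $f_k=d_k/\mu_k$, for which $f_{k+1}=\frac{\bar\mu_k}{\mu\bar\mu_{k+1}}\,\bE_{\CL_\rho}[(\rho+f_k)/(1+\bar K_k(\rho+f_k))]$; the bound $f_{k+1}\le\frac{\bar\mu_k}{\mu\bar\mu_{k+1}}(1+f_k)$ with $\bar\mu_k/\bar\mu_{k+1}\to1<\mu$ gives boundedness of $(f_k)$, hence $\bar K_k(\rho+f_k)\to0$ in $L^1$, the recursion reduces to $f_{k+1}\approx(1+f_k)/\mu$ (using $\bE_{\CL_\rho}[\rho]=1$), and its attracting fixed point is $1/(\mu-1)$. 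Here $\sum_k\bar K_k=\infty$ is exactly the clustering condition \eqref{inftydich} in this regime (and it prevents $f_k$ from drifting to $0$). In Case~(C3), $\mu_k$ is exponentially small relative to both $c_k$ and $d_k$ (the latter decaying only polynomially, like $1/\sigma_k$), so $\mu_k\rho$ is negligible and the randomness washes out: the recursion reduces to $d_{k+1}\approx c_kd_k/(c_k+d_k)$. For $c=1$ this is the $L=0$ instance of the argument behind Theorem~\ref{T.scaleFVpol}(d), giving $M=\tfrac12[1+\sqrt{1}]=1$; for $c<1$, the substitution above together with $\tilde d_k\sim\tfrac{1-c}{c}\bar c_k$ and the elementary estimate $\sigma_k\sim[(1-c)c_{k-1}]^{-1}$ give $\sigma_kd_k\to\bar c_k/\bar c_{k-1}\to1$.

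\emph{Main obstacle.} The identification of the fixed points is routine; the genuine work is the uniform control of the error coming from the randomness of the M\"obius transformation, i.e.\ showing that the perturbation of the relevant autonomous recursion is really vanishing. The two-sided sandwich of Theorem~\ref{T.order} --- $d_k$ lies between the zero-environment and the average-environment volatilities, the latter governed by the corresponding results of \cite{GHKK14} --- yields the upper estimate essentially for free, but the matching lower estimate requires a second-order Taylor expansion of $x\mapsto cx/(c+x)$ about $x=d_k$ in the increment $\mu_k\rho$, controlled through the finite second moment $\bE_{\CL_\rho}[\rho^2]=C$ from \eqref{Aprop}. In the polynomial-type Cases~(C2) and~(C3) this must be combined with the extra regularity hypotheses on $L_c,L_\mu$ (needed for $\bar c_k/\bar c_{k+1}\to1$ and the asymptotics of $\sigma_k$), which is the technically heaviest step.
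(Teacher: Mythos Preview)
Your treatment of Cases~(A), (B) and (C1) is essentially the paper's own argument: the paper also normalises (it uses $\mho_k=d_k/c_{k-1}$ rather than your $e_k=d_k/c_k$, which differs only by the factor $R_k=c_k/c_{k-1}\to c$), shows the recursion is a vanishing perturbation of an autonomous concave map, and invokes a contraction estimate around the fixed point (the paper's Lemmas~\ref{sk}--\ref{contr}). Your observation that in (C1) one must stay away from the non-attracting fixed point $0$ is also made explicitly in the paper.

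For (C2) and (C3) the paper takes a different and rather cleaner route. Instead of your direct perturbation analysis of the $f_k$-recursion (bounding $f_k$, Taylor-expanding in $\bar K_k(\rho+f_k)$, using $\bE_{\CL_\rho}[\rho^2]<\infty$), the paper proves an exact algebraic identity (Lemma~\ref{lem:scal1}): the random recursion for $d_k$ coincides with the \emph{deterministic} recursion in which $\mu_k$ is replaced by $\mu_k r_k$, where $r_k\in(0,1]$ is an explicit ratio of expectations. A short dominated-convergence argument (Lemma~\ref{lem:scal2}) then shows $r_k\to1$ whenever $K_k\to0$, so the scaling of $d_k$ is literally that of the average-environment model in \cite{GHKK14} with an asymptotically unchanged $\mu$-sequence. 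This buys a complete reduction to the deterministic results without any second-moment Taylor estimates; your approach is more self-contained but heavier, and in (C3)[first subcase] your route through $\tilde d_k/\bar c_k\to(1-c)/c$ is in fact a nice alternative. One small caution: in (C3)[second subcase] you cannot invoke Theorem~\ref{T.scaleFVpol}(d) directly, since there $\mu_k$ is exponential and not polynomial; you must go through the sandwich of Theorem~\ref{T.order} (both $d_k^0$ and $d_k^1$ satisfy $\sigma_k d_k^{0,1}\to1$ here) or through the $r_k$-reduction.
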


\noindent
The same observation as in Remark~\ref{r:notexhaus} applies. Again, the critical case $a=1$ is 
missing in (C3).

\subsubsection{Cluster formation}
\label{sss.clust}

Within the clustering regime it is of interest to study the size of the mono-type regions as a function of 
time, i.e., how fast the clusters where one type prevails grow.

This question has been addressed for other population models. For the voter model on $\Z^2$, Cox 
and Griffith~\cite{CG86} showed that the radii of the clusters with opinion ``all 1'' or ``all 0'' scale as 
$t^{\alpha/2}$ with $\alpha \in [0,1)$, i.e., clusters occur on all scales $\alpha \in [0,1)$. For the model 
of hierarchically interacting Fleming-Viot diffusions with $c_k \equiv 1$ (= critically recurrent migration), 
Fleischmann and Greven~\cite{FG94} showed that, for all $N \in \N \setminus \{1\}$ and all $\eta \in 
\Omega_N$, 
\be{allim}
\lim_{t\to\infty}
\CL\left[\left(Y^{(\Omega_N)}_{\eta,\lfloor(1-\alpha)t\rfloor}(N^t)\right)_{\alpha \in [0,1)}\right] 
= \CL\left[\left(Y\left(\log\left(\frac{1}{1-\alpha}\right)\right)\right)_{\alpha \in [0,1)}
\right]
\ee 
in the sense of finite-dimensional distributions, where $(Y(t))_{t \in [0,\infty)}$ is the standard 
Fleming-Viot diffusion on $\CP(E)$. A similar behaviour occurs for other models, e.g.\ branching 
models as shown in Dawson and Greven~\cite{DG96}. 

The advantage of the hierarchical group is that we can analyze the cluster formation as a function 
of $N$ and let $N \to \infty$ to approach the critically recurrent case (recall Remark~\ref{degreeretr}). 
We can do this by using the {\em interaction chain}. In \cite{GHKK14} we analysed the Cannings 
model in the limit as $N \to \infty$, namely, we proved that for some \textit{level scaling function} 
$k\colon\,\N_0\to\N_0$, satisfying $0 \leq k(j) \leq j+1$ and $\lim_{j\to\infty} k(j)=\infty$, we obtained 
a \textit{non-trivial clustering limiting law} (henceforth we pick $\eta=0$ and drop it from the notation)
\be{ak:interaction-chain-scaling-limit-homogeneous}
\lim_{j \to \infty} \mathcal{L} \left[ M^{(j)}_{-k(j)} \right]
= \mathcal{L} \big[M^\infty\big]
\ee
for some $M^\infty\in\mathcal{P}(E)$ satisfying $E[M^\infty] = E[X_0^{(\Omega_N)}(0)] = \theta 
\in \CP(E)$ that is not of the form $M^\infty=\delta_{U}$ for some possibly random $U \in E$. We 
will do the same in the random environment $\omega$, namely, our aim is to show that for 
$\mathbb{P}$-a.e.\ $\omega$
\be{ak:interaction-chain-scaling-limit}
\lim_{j \to \infty} \mathcal{L} \left[ M^{(j)}_{-k(j)}(\omega) \right]
= \mathcal{L} \big[M^\infty(\omega) \big]
\ee
for some $M^\infty(\omega) \in \mathcal{P}(E)$ satisfying $\E[M^\infty(\omega)] = \theta$ that is not of the 
form $M^\infty(\omega) = \delta_{U(\omega)}$ for some possibly random $U(\omega) \in E$. 

As in Dawson and Greven~\cite{DG93b,DGV95,DG96}, and similarly as in \eqref{allim}, in order to 
obtain the \emph{profile of cluster formation} it is necessary to consider a whole \emph{family of scalings} 
$k_\alpha\colon\,\N_0 \to \N_0$, $\alpha \in I$, with $I=\N_0$, $I=[0,\infty)$ or $I=[0,1)$, and with
$j \mapsto k_\alpha(j)$ non-decreasing, $0 \leq k_\alpha(j) \leq j+1$ and $\lim_{j\to\infty} k_\alpha(j) 
= \infty$, such that 
\be{eq:ak:scaling-interaction-chain}
\lim_{j\to\infty} \mathcal{L} \left[M^j_{-k_\alpha(j)}(\omega)\right]
= \mathcal{L} \left[M^*_{\alpha}(\omega)\right]
\quad \text{ for } \mathbb{P} \text{-a.e. } \omega \text{ and all } \alpha \in I,
\ee
for some non-constant Markov process $M^*=(M^*_\alpha(\omega))_{\alpha \in I}$ on $\mathcal{P}(E)$ 
that preserves the mean $\theta$. The convergence in \eqref{eq:ak:scaling-interaction-chain} is in the weak 
topology on the product space of $\mathcal{P}(E)$ and the space of the environment. 

There are five {\em universality classes} of clustering behaviour (see \cite{DG96}): 
\begin{definition}[{\bf Clustering classes}]
\label{def:clusuniv} 
$\mbox{}$
\begin{itemize}
\item[\textup{(I)}] 
{\bf Concentrated clustering} ($M^\ast$ is a Markov chain):
\begin{itemize}
\item[\textup{(I1)}]  
$k_\alpha(j) = 0 \vee (j+1 - \alpha)$, $\alpha \in \N_0$, $M^*$ is trapped after one step.
\item[\textup{(I2)}]
$k_\alpha(j) = 0 \vee (j+1 - \alpha)$, $\alpha \in \N_0$, $M^*$ is not trapped.
\end{itemize}
\item[\textup{(II)}] 
{\bf Diffusive clustering} ($M^\ast$ is a diffusion process):
\begin{itemize}
\item[\textup{(II1)}]  
\emph{Fast clustering}: 
$k_\alpha(j) = 0 \vee \lfloor j+1 - \alpha h(j) \rfloor $, $\alpha \in [0,\infty)$, where $h\colon\,\N_0
 \to [0,\infty)$ is such that $\lim_{j\to\infty} h(j) = \infty$ and $\lim_{j\to\infty} h(j)/j = 0$.
\item[\textup{(II2)}] 
\emph{Moderate clustering}: 
$k_\alpha(j) = \lfloor (1-\alpha)(j+1)\rfloor$, $\alpha \in [0,1)$. 
\item[\textup{(II3)}] 
\emph{Slow clustering}: 
$\lim_{j\to\infty} k_\alpha(j)/j=0$, $\alpha \in [0,1)$.
\end{itemize}
\end{itemize}
\end{definition}

\noindent
(The terminology is slightly different from \cite{GHKK14}.) The volume of the clusters at time $t$ in 
these five universality classes (arranged in decreasing order of magnitude) equals, respectively, 
$N^t$, $ZN^t$, $N^{t-o(t)}$, $N^{Zt}$, $N^{o(t)}$, with $Z \in (0,1)$ some random variable. Note 
that slow clustering borders with the regime of coexistence (= no clustering).

Recall (a)-(d) in Theorem~\ref{T.scaleFVpol} and (A)-(C) in Theorem~\ref{T.scaleFVexp}. Recall 
that, under the law $\bP$, the law of the initial state $X^{(\Omega_N)} (\omega;0)$ is stationary 
and ergodic under translations in $\Omega_N^{\mathbb{T}}$, with mean single-coordinate measure 
\begin{equation}\label{e1548}
\theta = \bE[X_0^{(\Omega_N)}(\omega;0)] \in \CP(E).
\end{equation} 
The interaction chain on level $j$, arising in the scaling limit $N\to\infty$, starts in $\theta$. 
Below this is also assumed for the scaling limit $j\to\infty$.

\begin{theorem}[{\bf Cluster formation}]
\label{T.cluform}
Fix $N \in \Ntwo$. The five universality classes in the regime of clustering, linked to the different 
cases of scaling behaviour of $ (d_k)_{k \in \N}$, are as follows:
\begin{itemize}
\item[$\bullet$]
{\rm \textbf{(a), (A):}}
The scaling in regime \textup{(I1)} yields \eqref{eq:ak:scaling-interaction-chain} with $I=\N_0$. 
The scaling limit $M^*$ is the time-homogeneous Markov chain on $\mathcal{P}(E)$ starting in 
$\theta$ with transition kernel $K(\theta, \cdot)$ given by 
\be{regi1}
K(\theta, \cdot) = \int_E \theta(\dd u) \delta_{\delta_u}(\cdot),
\ee
which satisfies $K_\alpha=K$, for all $\alpha \in \N$.
\item[$\bullet$]
{\rm \textbf{(b), (B), (C1), (C3)[first subcase]:}}
The scaling in regime \textup{(I2)} yields \eqref{eq:ak:scaling-interaction-chain} with $I=\N_0$. 
The scaling limit $M^*$ is the time-inhomogeneous Markov chain on $\mathcal{P}(E)$ in random 
environment $(\chi_\alpha)_{\alpha \in \N_0}$ starting in $\theta$ with transition kernels 
$\{K_\alpha(\theta,\cdot)(\omega)\}_{\alpha \in \N_0}$ given by (recall \eqref{eq:equi-ident-c}) 
\be{regi2}
K_\alpha(\theta, \cdot)(\omega) 
= \nu^{1, \widetilde{M}, 2 \widetilde{K} \chi_\alpha(\omega)}_\theta (\cdot)
\ee
with
\begin{equation}
\label{wtMwtK}
(\widetilde{M}, \widetilde{K}) =
\begin{cases}
(M, K), & \textup{(b)},\\
(\bar{M}/c, \bar{K}), & \textup{(B)},\\
((1-c)/c, 0), & \textup{(C1), (C3)[first subcase]}.
\end{cases}
\end{equation}
In the last two cases, the random environment does not affect the scaling limit, and the scaling is 
the same as for the homogeneous environment with the same mean. In the first two cases the 
measure-valued process $(\chi_\alpha(\omega))_{\alpha\in\N_0}$ in \eqref{regi2} is constructed 
by extending the one-sided stationary random environment $(\chi^{(\eta,k)} (\omega))_{\eta\in\Omega_N,
\,k\in\N_0}$ introduced in \eqref{chiprop} to a two-sided stationary random environment $(\chi^{(\eta,k)}
(\omega))_{\eta\in\Omega_N,\,k\in\Z}$, and defining
\be{regi3}
\CL[(\chi_\alpha(\omega))_{\alpha \in \N_0}] 
= \CL[(\chi^{(\eta,-\alpha)}(\omega))_{\alpha \in \N_0}],
\quad \eta \in \Omega_N,
\ee
which by stationarity does not depend on $\eta$. Furthermore, $\chi_\alpha(\omega)$ is an 
$\mathcal{M}_f([0,1])$-valued resampling measure with $\bE[\chi_\alpha(\omega)] = \bar \chi_\alpha$ 
satisfying $\bar \chi_\alpha((0,1])=1$.
\item[$\bullet$]
{\rm \textbf{(c), (C2)[subcase $\lim_{k\to\infty} k\bar{K}_k=\infty$]:}} 
The scaling in regime \textup{(II1)} yields \eqref{eq:ak:scaling-interaction-chain} with $I=[0,\infty)$. 
The scaling limit $M^*$ is the time-changed standard Fleming-Viot process
\be{clusteringC}
M^*_{\alpha}  = Z^{0,1,0}_{\theta} \left( \ell(\alpha) \right), \quad \alpha \in [0,\infty)
\ee
with
\begin{itemize}
\item[--]
{\rm \textbf{(c):}} 
$\ell(\alpha) = \alpha$, $h(j) = 1/\sqrt{K_j}$.
\item[--]
{\rm \textbf{(C2)[$\lim_{k\to\infty} k\bar{K}_k = \infty$]:}}
$\ell(\alpha) = \frac{\mu}{\mu-1}\alpha$, $h(j) = 1/K_j$. 
\end{itemize}
\item[$\bullet$]
{\rm \textbf{(d), (C2)[subcase $\lim_{k\to\infty} k\bar{K}_k=\bar{N}$], (C3)[second subcase]:}}
The scaling in regime \textup{(II2)} yields \eqref{eq:ak:scaling-interaction-chain} with $I = [0,1)$.
The scaling limit $M^*$ is the time-changed standard Fleming-Viot process
\footnote{\cite{GHKK14} contains a typo: there the time scaling $1/(1-\alpha)^R$ was wrongly 
written as $1/(1-\alpha^R)$.}
\be{clusteringD}
M^\ast_{\alpha}  = Z^{0,1,0}_{\theta} \left(\log\left(\frac{1}{(1-\alpha)^R}\right)\right),
\quad \alpha \in [0,1)
\ee
with
\begin{itemize}
\item[--]
{\rm \textbf{(d):}} 
$R=M(1-a)$.
\item[--]
{\rm \textbf{(C2)[subcase $\lim_{k\to\infty} k\bar{K}_k = \bar{N}$]:}}
$R=\bar{N}\frac{\mu}{\mu-1}$.
\item[--]
{\rm \textbf{(C3)[second subcase]:}}
$R=1-a$.
\end{itemize}
\end{itemize}
For reasons explained in Remark~{\rm \ref{rem:homclus}}, in cases {\rm \textbf{(c), (d), (C2), 
(C3)[second subcase]}} only convergence in $\mathbb{P}$-probability and not 
$\mathbb{P}$-a.s.\ is obtained.
\hfill $\square$
\end{theorem}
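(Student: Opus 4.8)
The plan is to work directly with the interaction chain $M^{(j)}_\eta(\omega)$ of Definition~\ref{def:interaction-chain} (the $N\to\infty$ limit object of Theorem~\ref{T.InteractionChain}) --- the time‑inhomogeneous Markov chain started at $\theta$ whose one‑step kernels are the McKean--Vlasov equilibria of \eqref{ag:interaction-chain-transition-kernel} --- and to evaluate $\lim_{j\to\infty}\CL[M^{(j)}_{-k_\alpha(j)}(\omega)]$ for each of the four families of level‑scalings in Definition~\ref{def:clusuniv}, case by case. Everything rests on three inputs established earlier: the scaling identity $\nu^{c,d,\Lambda}_\theta=\nu^{1,d/c,\Lambda/c}_\theta$ of Proposition~\ref{prop:McKean-Vlasov-well-posedness}(c), which rewrites the $k$‑th kernel in terms of the single ratio $d_k/c_k$ together with the rescaled resampling measure $\Lambda^{\mathrm{MC}_k(\eta)}(\omega)/c_k=2(\mu_k/c_k)\,\chi^{\mathrm{MC}_k(\eta)}(\omega)$; the asymptotics of $d_k/c_k$, $d_k/\sqrt{c_k\mu_k}$ or $\sigma_kd_k$ furnished by Theorems~\ref{T.scaleFVpol} and~\ref{T.scaleFVexp}; and the variance identity \eqref{e1268}, whose intermediate‑level version $\var\langle M^{(j)}_{-m},f\rangle=\prod_{l=m}^{j}\tfrac{2c_l}{2c_l+\lambda_l\rho_l+2d_l}\,\var_\theta(f)$ quantifies how far the chain has drifted away from $\theta$ at level $-m$.

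For the \emph{concentrated} regimes (I1), (I2) one has $k_\alpha(j)=0\vee(j+1-\alpha)$, so for fixed $\alpha$ and $j$ large $M^{(j)}_{-k_\alpha(j)}(\omega)$ is the chain after exactly $\alpha$ steps from the top, using the kernels at levels $j,j-1,\dots,j+1-\alpha$. First I would prove that these kernels converge. In cases \emph{(b)}, \emph{(B)}, \emph{(C1)}, \emph{(C3)[first subcase]} Theorems~\ref{T.scaleFVpol}(b) and~\ref{T.scaleFVexp}(B),(C1),(C3) give $d_k/c_k\to\widetilde M$ and $\mu_k/c_k\to\widetilde K$ with $(\widetilde M,\widetilde K)$ as in \eqref{wtMwtK}, while --- after extending the one‑sided stationary field $(\chi^{(\eta,k)}(\omega))$ to a two‑sided one --- the vector of top‑level environments $(\chi^{(\eta,j-m)}(\omega))_{0\le m<\alpha}$ converges jointly to $(\chi_m(\omega))_{0\le m<\alpha}$ of \eqref{regi3}; continuity of $\nu^{1,d,\Lambda}_y$ in $(d,\Lambda,y)$ (which follows from well‑posedness of the McKean--Vlasov martingale problem and continuous dependence of its generator on $(d,\Lambda)$, or from the moment duality) then identifies the limiting kernels as $\nu^{1,\widetilde M,2\widetilde K\chi_m(\omega)}$, and composing the $\alpha$ of them by the Markov property yields \eqref{eq:ak:scaling-interaction-chain} with $M^*$ the chain \eqref{regi2}; for \emph{(C1)} and \emph{(C3)[first]} one has $\widetilde K=0$, the environment drops out, and the limit is the homogeneous chain with volatility $(1-c)/c$. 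In cases \emph{(a)}, \emph{(A)} instead $\widetilde K=\infty$, so every factor of the product in \eqref{e1268} tends to $0$ and $M^{(j)}_{-k_\alpha(j)}(\omega)$ becomes a point mass for every $\alpha\ge1$; by mean preservation that point mass is $\theta$‑distributed, so the first kernel converges to $\int_E\theta(\dd u)\,\delta_{\delta_u}(\cdot)$, and since this kernel fixes point masses the limit chain is \eqref{regi1} --- trapped after one step (regime (I1)). Because only finitely many environment coordinates near the top enter, these statements hold for $\bP$‑a.e.\ $\omega$.

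For the \emph{diffusive} regimes (II1), (II2) one has $K=0$, hence $\mu_k/c_k\to0$ and $d_k/c_k\to0$: each single kernel is asymptotically trivial, and the number of steps between level $-(j+1)$ and level $-k_\alpha(j)$, of order $\alpha h(j)$ in (II1) resp.\ $\alpha(j+1)$ in (II2), diverges. Following the Dawson--Greven scheme for diffusive clustering, the plan is: (i) from \eqref{e1268}, show that $\var\langle M^{(j)}_{-k_\alpha(j)},f\rangle$ converges, by inserting $d_l\sim\sqrt{c_l\mu_l}$ (cases \emph{(c)}, \emph{(C2)}) resp.\ $\sigma_ld_l\to M$ (cases \emph{(d)}, \emph{(C3)[second]}) and the prescribed $h$, so that the accumulated $-\sum_l\log\bigl(1-\tfrac{2d_l+\lambda_l\rho_l}{2c_l+2d_l+\lambda_l\rho_l}\bigr)$ converges to the value dictated by $\ell(\alpha)$, the $\lambda_l\rho_l/c_l$ contributions being of lower order; (ii) upgrade this to convergence of the whole chain to the time‑changed standard Fleming--Viot diffusion $Z^{0,1,0}_\theta(\ell(\alpha))$ of \eqref{clusteringC}--\eqref{clusteringD}, via a convergence‑of‑generators argument for the interaction chain in which the Cannings part $2(\mu_l/c_l)\chi^{\mathrm{MC}_l(\eta)}(\omega)\to0$ vanishes after the time change and only the diffusive part survives. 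The environment enters the accumulated sum only through hierarchy‑weighted averages of $\rho_l$ (and through $d_l$, already self‑averaged via \eqref{diffusion-constants}); replacing those by their means is an ergodic‑type argument that succeeds only in $\bP$‑probability, which is exactly the restriction stated in the theorem and detailed in Remark~\ref{rem:homclus}. The remaining class (II3) corresponds to the unresolved case \emph{(e)} of Remark~\ref{r:notexhaus} and is not claimed.

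The routine part of all this is substituting the $d_k$‑asymptotics of Theorems~\ref{T.scaleFVpol}--\ref{T.scaleFVexp} into \eqref{e1268}. The hard part will be twofold. First, in the diffusive regimes, promoting the matching of variances to a genuine identification of the scaling limit as the Fleming--Viot diffusion: this forces one to control the interaction chain as a \emph{process} rather than only its single‑level marginals, e.g.\ through the generator limit in step (ii) together with tightness. Second, the treatment of the quenched environment along the tree --- constructing the two‑sided stationary extension and showing it is the joint limit of the top‑level environments in the concentrated regimes, and establishing the self‑averaging of the weighted sums of $\rho_l$ (with only convergence in $\bP$‑probability) in the diffusive regimes. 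This second point is where the random model departs essentially from \cite{GHKK14}, and I expect it to be the crux of the proof.
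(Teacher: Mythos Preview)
Your plan is correct and essentially mirrors the paper's argument: for the concentrated regimes you identify the limiting kernels via $d_k/c_k\to\widetilde M$, $\mu_k/c_k\to\widetilde K$ together with continuity of $(d,\Lambda,\theta)\mapsto\nu^{1,d,\Lambda}_\theta$ (the paper establishes this continuity explicitly through the McKean--Vlasov dual, and for the non--i.i.d.\ environment uses a space--time duality to handle the finite-dimensional distributions in $\alpha$), while for the diffusive regimes you reduce to the product/variance formula and a weak law of large numbers for $\sum_l m_l(\omega)$, exactly as in the paper's Lemma~\ref{lem:wlln}.

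One small difference worth flagging: for step (ii) in the diffusive cases you propose a convergence-of-generators argument, whereas the paper invokes the martingale characterisation of \cite[Section~6(b)]{DGV95}, which reduces convergence to the Fleming--Viot diffusion directly to the asymptotics of the \emph{conditioned} variance increments (your formula \eqref{e1268} with a general starting level), so that identifying the function $F(\alpha_1,\alpha_2)$ and the time change $L$ with $\Delta F\equiv 1$ already yields the process-level limit. This shortcut spares you the tightness/generator work you anticipate as the ``hard part''.
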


\begin{remark}\label{r.1602}
{\rm We expect that also {\em slow} clustering occurs, namely, in the case (e) that was not treated 
in Theorems~\ref{T.scaleFVpol} and \ref{T.scaleFVexp} (recall Remark~\ref{r:notexhaus}).} 
\hfill $\square$
\end{remark}

\subsection{Summary of the effects of the random environment}
\label{ss.effectre} 

\paragraph{1.}
Theorem~\ref{T.wpbasic} says that the hierarchical Cannings process in random environment is 
well-defined for $\bP$-a.e.\ $\omega$, while Theorem~\ref{T.ltbeq} shows that it converges to 
an $\omega$-dependent equilibrium that preserves the single-component mean. 

\paragraph{2.}
Theorem~\ref{mainth} (mono-scale) and Theorem~\ref{T.InteractionChain} (multi-scale) identify 
the behaviour of the $k$-block averages in the limit as $N\to\infty$ in terms of the McKean-Vlasov 
process with parameters that depend on $\omega$ and $k$. The volatility $d_k$ depends on the 
parameters $c_l,\lambda_l$, $0 \leq l < k$, and on the law $\CL_\rho$ via the recursion relation 
in \eqref{diffusion-constants}, which is a \emph{randomized} version of the recursion relation in 
\cite{GHKK14}.  

\paragraph{3.}
Theorems~\ref{T.coexcritNfin} and \ref{T.dicho} show that the dichotomy ``coexistence versus 
clustering'' is \emph{not} affected by the random environment: the \emph{same} conditions apply to 
the homogeneous hierarchical Cannings process studied in \cite{GHKK14}. Apparently, for the nature 
of the equilibrium only the large-scale properties of the random environment matter. Since the 
resampling measures are stationary under translations with total masses whose sigma-algebra at 
infinity is trivial, only the average medium behaviour is relevant. The proof of the dichotomy in 
Theorem~\ref{T.coexcritNfin} requires assumption \eqref{Apropalt} rather than assumption 
\eqref{Aprop}. We believe this strengthening to be redundant, but a proof would require considerable 
extra work. 

\paragraph{4.}
Theorem~\ref{T.order} shows that \emph{the effect of the random environment is to lower the 
volatility parameter $ d_k $ on every hierarchical scale $k$} compared to the average environment. The intuition behind 
this is that the random environment causes fluctuations in the resampling, which in turn reduce the 
clustering. The sandwich between the volatilities for the zero environment and the average environment 
is useful to control the scaling. 

\paragraph{5.}
Theorem~\ref{T.scaleFVpol} (polynomial coefficients) and Theorem~\ref{T.scaleFVexp} 
(exponential coefficients) show that for Cases (b) and (B), where \emph{migration and resampling 
occur at comparable rates}, the phenomenon of lower volatility $ d_k $ in random environment persists in 
the limit as $k\to\infty$: even though the scaling of $d_k$ as $k\to\infty$ is the same as for the 
average environment, it has a \emph{different prefactor} (e.g.\ $M$ solving \eqref{MK} is strictly 
smaller than $M^*$ solving \eqref{MK} with $\CL_\rho$ replaced by $\delta_1$, as is easily shown 
by applying Jensen's inequality). For all other cases both the scaling and the prefactor are the same 
as for the average environment.

\paragraph{6.} 
Theorem~\ref{T.cluform} shows that for Cases (b), (B), (C1), (C3)[first subcase] the scaling of the 
clusters in the random environment changes compared to that in the average environment: the 
random environment is visible even in the scaling limit. \emph{The effect of the random environment 
is to slow down the growth of the clusters}, i.e., to enhance the diversity of types. For all other cases 
the scaling of the clusters is the same as for the average environment.

\section{Existence, uniqueness, duality and equilibrium}
\label{s.dual}

In this section, we prove Theorems~\ref{T.wpbasic}--\ref{T.ltbeq}. In Section~\ref{ss.spatialrand} 
we construct the dual process with the help of a graphical representation based on Poisson random 
measures. In Section~\ref{ss.dualities} we exhibit the duality. In Section~\ref{ss.dualproofs} we 
establish the existence and uniqueness of the dual process and show the existence of its equilibrium. 
In Section~\ref{ss.dualproofsthms} we use these results to prove Theorems~\ref{T.wpbasic}--\ref{T.ltbeq}. 
Theorems~\ref{T.existence-coalescent}--\ref{T.duallongrun} below do not need a separate proof: this 
is verbatim the same as the proof for the homogeneous environment given in \cite{GHKK14}.

\subsection{The spatial coalescent in random environment}
\label{ss.spatialrand}

In this section we introduce a hierarchical coalescent process in random environment that will serve 
as a dual to the hierarchical Cannings process in random environment. 

The coalescent is a Markov process taking values in the set of partitions of  $\N$ labelled by the 
points of a geographical space. We recall the basic objects and notations, and refer to 
\cite[Section~2]{GHKK14} for details. 

Let $G$ be a discrete geographical space. Our target geographical space is $G = \Omega_N$. 
This will be approximated by a sequence of geographical spaces 
\be{Gchoices}
G_{N,K}=\{0,\ldots,N-1\}^K, \quad K\in\N,
\ee
which are to be thought of as a sequence of blocks filling $\Omega_N$. 
We will also need to consider the mean-field geographical space
\be{Gchoices*}
G = \{0,*\},
\ee
where $\{\ast\}$ is a cemetery location. 
The state space of the spatial coalescent is the set of \emph{$G$-labelled partitions} defined as 
\be{labelled-partitions} 
\Pi_{G,n} = \Big\{ \pi_{G} = \{ (\pi_1, g_1), (\pi_2, g_2), \ldots, (\pi_b, g_b) \}
\colon\,\{ \pi_1, \ldots, \pi_b \} \in \Pi_n,\, g_1, \ldots, g_b \in G, b \in [n] \Big\}, 
\ee 
where $n\in\N$ and
\be{ag19} 
\Pi_n = \text{ set of partitions }  \pi = \{ \pi_i \subset [n] \}_{i=1}^b
\text{ of $[n]$ into disjoint families $\pi_i$, $i\in[b]$.}
 \ee

We equip the set $\Pi_{G,n}$ with the {\em discrete topology}. Let $a$ be a random walk transition 
kernel on $G$. When $G = \Omega_N$ we use the hierarchical random walk kernel $a = a^{(N)}$ 
in \eqref{32b}, when $G = G_{N,K}$ we use the same hierarchical random walk kernel but with 
$c_k = 0$ for $k > K$, and when $G = \{ 0, \ast \} $ we use the random walk kernel with 
$a(0,\ast) = c$, $a(\ast,0) = 0$.

Given the random environment $\omega$ (recall Section~\ref{sss.RE}), the spatial coalescent 
in random environment is the Markov process on state space $\Pi_{G,n}$ with the following dynamics:
\begin{itemize}
\item 
\textbf{[Migration]} Each partition block performs an independent random walk on $G$ with random 
walk kernel $a^*$, where $a^*(g_1,g_2) = a(g_2,g_1)$, $g_1,g_2 \in G$, is the conjugate random 
walk kernel. 
\item
\textbf{[Local coalescence]} Independently at each location $g \in G$, the $l$-tuples of the partition 
elements at $g$ coalesce into a single partition element at $g$ at rate
\be{eq:lambda-coalescence-rates}
\lambda_{b,l}(\omega) = \int_{(0,1]} r^l (1-r)^{b-l} 
\frac{\Lambda^{[g]}(\omega)(\dd r)}{r^2},
\ee
where $b$ is the current total number of partition elements and $\Lambda^{[g]}(\omega)$ is the 
resampling measure at $g$ in environment $\omega$.
\item 
\textbf{[Non-local coalescence with reshuffling]} In the case $G = \Omega_N$, independently at each 
location $g \in B_{|\xi|}(\xi)$, $\xi \in \Omega^{\mathbb{T}}_N$, the $l$-tuples of the partition elements 
in $B_{|\xi|}(\xi)$ coalesce into a single partition element at $g$ at rate
\be{eq:lambda-coalescence-rates-l}
N^{-2k} \lambda^{(\xi)}_{b,l}(\omega),
\qquad 
\lambda^{(\xi)}_{b,l}(\omega) = \int_{(0,1]} r^l (1-r)^{b-l} 
\frac{\Lambda^\xi(\omega)(\dd r)}{r^2}.
\ee
Subsequently, all the partition elements located in $B_{|\xi|}(\xi)$ are uniformly \emph{reshuffled}, i.e., 
all the partition elements in $B_{|\xi|}(\xi)$ get a new location that is drawn uniformly from $B_{|\xi|}(\xi)$.
\end{itemize}

Note that in the case $G=\Omega_N$ the partition elements of the coalescent perform a hierarchical 
random walk on $\Omega_N$ in the environment $\omega$ with migration coefficients given by 
(recall \eqref{32b})
\be{cNomega}
c_k(\omega)(N,\eta) = c_k + N^{-1}\,\lambda^{\mathrm{MC}_{k+1}(\eta)}(\omega),
\qquad \eta \in \Omega_N, k \in \N_0,
\ee
where $\mathrm{MC}_k(\eta)$ is the unique site at height $k$ above $\eta\in\Omega_N$ and 
$\lambda^{\mathrm{MC}_{k+1}(\eta)}(\omega) = \Lambda^{\mathrm{MC}_{k+1}(\eta)} (\omega)$ 
$((0,1])$ (recall the notation introduced in Section~\ref{ss.random} and see Fig.~\ref{fig-hierartree}).
The extra term in the right-hand side of \eqref{cNomega} comes from the reshuffling that takes 
place prior to the resampling. 

The {\em coalescence} rate of two partition elements in $B_{|\xi|}(\xi)$ equals (recall \eqref{fdh:masschi})
\be{lambdaxi}
N^{-2|\xi|} \lambda^\xi(\omega), \qquad  
\lambda^\xi(\omega)= \lambda_{|\xi|} \rho^\xi(\omega), 
\qquad \xi\in\Omega_N^{\mathbb{T}}.
\ee 

We specify the spatial coalescent as a Markov process on $\Pi_G = \cup_{n\in\N} \Pi_{G,n}$ by 
providing its generator. To that end, we need a space of test functions on $\Pi_G$. Namely, let 
$\CC_G$ be the algebra of bounded continuous functions $F\colon\,\Pi_G \to \R$ such that for all 
$F \in \CC_G$ there exists an $n=n(F) \in \N$ and a bounded function
\be{ghkk1}
F_n\colon\, \Pi_{G,n} \to \R
\ee 
with the property that $F(\cdot)=F_n(\cdot\vert_n)$.
Consider the linear operator $L^{(G)*}(\omega) \colon \mathcal{C}_G \to \mathcal{C}_G$ 
defined as
\be{spatial-coalescent-generator}
L^{(G)*}(\omega) = L^{(G)*}_{\mathrm{mig}} + L^{(G)*}_{\mathrm{coal}}(\omega),
\ee
where the operators $L^{(G)*}_{\mathrm{mig}},L^{(G)*}_{\mathrm{coal}}(\omega)\colon\, 
\mathcal{C}_G \to \mathcal{C}_G$ are defined for $\pi_G \in \Pi_G$ and $F \in \mathcal{C}_G$ as
\be{no3}
(L^{(G)*}_{\mathrm{mig}} F) (\pi_G) 
= \sum_{i=1}^{b(\pi_{G}|_n)} \sum_{g,f \in G} a^*(g,f)
\big[F_n\big(\textrm{mig}_{g \to f,i}(\pi_{G,n})\big)-F(\pi_G)\big]
\ee
and
\be{add2_a}
\begin{aligned}
&(L^{(G)*}_{\mathrm{coal}}(\omega) F) (\pi_G) 
= \sum_{k\in\N_0} \, \sum_{\xi \in \Omega^{(k)}_N} \,\,
\sum_{\substack{J\subset\{i \in [n]\colon\,g_i = g\},\\|J| \geq 2}}\\
&\qquad \left(N^{-2k} \lambda^{(\xi)}_{b(\pi_{G,n},g),|J|}(\omega)
\left[F_n\left(\textrm{resh}_{B_{|\xi|}(\xi),U_{B_{|\xi|}(\xi)}} 
\circ \textrm{coal}_{J,g}(\pi_{G,n})\right)-F(\pi_G)\right]\right).
\end{aligned}
\ee
Here, the \textit{migration map} $\textrm{mig}_{g \to f,i}(\pi_{G}\vert_n)$ changes the spatial coordinate 
of the $i$-th partition block from $g$ to $f$ (if such a partition element exists), the \textit{coalescence 
map} $\textrm{coal}_{J,g}(\pi_{G,n})$ coalesces the partition blocks with indices in $J$ and location 
$g$ (if any) into one block, while the \textit{reshuffling map} $\textrm{resh}_{B_{|\xi|}(\xi),U_{B_{|\xi|}(\xi)}}$ 
independently relocates each partition element located in $B_{|\xi|}(\xi)$ to a new location in 
$B_{|\xi|}(\xi)$ that is randomly chosen. 

\begin{theorem}[{\bf Existence and uniqueness}]
\label{T.existence-coalescent}
For every $\pi \in \Pi_G$ the $(L^{(G)*}(\omega), \mathcal{C}_G, \delta_\pi)$-mar\-tingale problem is 
well-posed. \hfill $\square$
\end{theorem}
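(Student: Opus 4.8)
The plan is to follow the standard martingale-problem machinery for spatial coalescents, adapting the argument from \cite[Section~2]{GHKK14} to the random environment. First I would fix a realisation of the environment $\omega$ and note that, by the stationarity and moment assumptions in \eqref{Aprop}, the coalescence rates $\lambda^{(\xi)}_{b,l}(\omega)$ in \eqref{eq:lambda-coalescence-rates-l} are finite for $\bP$-a.e.\ $\omega$; indeed, integrability is governed by the second condition in \eqref{ak1000} applied to $\Lambda^\xi(\omega) = \lambda_{|\xi|}\chi^\xi(\omega)$, which holds by assumption on the random field $\{\chi^\xi(\omega)\}$. The key structural observation is that the coalescent, restricted to its first $n$ coordinates, is a pure-jump Markov process on the \emph{countable} state space $\Pi_{G,n}$ (with migration confined to the locations actually occupied by the $\leq n$ partition blocks), so uniqueness of the martingale problem reduces to showing that the total jump rate out of any state is finite and that explosion does not occur.

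The main steps I would carry out are as follows. (1) \emph{Existence}: construct the process directly via a graphical representation based on independent Poisson random measures --- one family for migration jumps of each partition block, one family for local coalescence at each site, and one family (indexed by $\xi\in\Omega^\mathbb{T}_N$, with intensity $N^{-2|\xi|}\lambda^{(\xi)}_{b,l}(\omega)$) for the non-local coalescence-with-reshuffling events. One shows that, on compact time intervals and for the $n$-restricted process, only finitely many Poisson points are relevant $\bP$-a.s., using \eqref{ak:recurrence-cond} to bound the total migration rate and \eqref{ak:lambda-growth-condition} together with \eqref{Aprop} to bound the total (reshuffling-)resampling rate summed over all levels $k$. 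This yields a well-defined c\`adl\`ag process solving the martingale problem for $L^{(G)*}(\omega)$. (2) \emph{Uniqueness}: since the number of partition blocks is non-increasing under coalescence and bounded by $n$, the total coalescence rate is bounded; the total migration rate per block is bounded by the assumption that $a^{(N)}$ has finite total rate (plus the reshuffling contribution, which is also finite by the same estimates). Hence the $n$-restricted process is a well-defined continuous-time Markov chain on a countable state space with bounded rates, for which the martingale problem is automatically well-posed (see e.g.\ the standard criterion that a non-explosive $Q$-matrix gives a unique solution). (3) \emph{Consistency}: the family of $n$-restricted processes is consistent under the natural projections $\Pi_{G,n+1}\to\Pi_{G,n}$, so by Kolmogorov extension one obtains a unique process on $\Pi_G=\cup_n\Pi_{G,n}$, and uniqueness of the martingale problem on $\mathcal{C}_G$ follows because test functions in $\mathcal{C}_G$ depend only on finitely many coordinates.

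The step I expect to be the main obstacle is controlling, uniformly in the level $k$, the total rate at which reshuffling-resampling events affect a given finite collection of partition blocks in the random environment. In the homogeneous case this is immediate from \eqref{ak:lambda-growth-condition}; in the random case one must show that $\sum_{k\in\N_0} N^{-2k}\,\#\{\xi\in\Omega^{(k)}_N\colon B_{|\xi|}(\xi) \text{ relevant}\}\,\lambda^{(\xi)}_{b,l}(\omega)$ is $\bP$-a.s.\ finite, which requires a Borel--Cantelli argument using the first- and second-moment bounds \eqref{Aprop} on $\rho^\xi(\omega)$ to rule out atypically large $\lambda^\xi(\omega)$ on too many levels. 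Concretely, since for fixed $\eta$ there is one relevant block per level, one estimates $\sum_k N^{-2k}\lambda_k\rho^{\mathrm{MC}_k(\eta)}(\omega)$; by \eqref{ak:lambda-growth-condition} the deterministic prefactor decays geometrically, and the random factors $\rho^{\mathrm{MC}_k(\eta)}(\omega)$ cannot grow fast enough to spoil summability because $\bP(\rho^\xi(\omega)>\eee^{\epsilon k})\leq C\eee^{-2\epsilon k}$ by Chebyshev. Once this a.s.\ finiteness is established, the remainder of the argument is routine and parallels \cite[Section~2]{GHKK14}, which is precisely why --- as the excerpt already notes --- Theorem~\ref{T.existence-coalescent} needs no separate proof beyond this environment-dependent bookkeeping.
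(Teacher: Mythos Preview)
Your proposal is correct and matches the paper's approach exactly: the paper gives no separate proof but states that the argument is ``verbatim the same as the proof for the homogeneous environment given in \cite{GHKK14}'', and your outline (graphical Poisson construction, restriction to $\Pi_{G,n}$ as a non-explosive countable-state chain, projective consistency) is precisely that construction with the environment-dependent rates inserted. The only remark is that the step you flag as the ``main obstacle''---$\bP$-a.s.\ finiteness of $\sum_{k} N^{-k}\lambda_k\,\rho^{\mathrm{MC}_k(\eta)}(\omega)$---does not require Borel--Cantelli: since $\bE[\rho^\xi]=1$, Tonelli gives the sum finite $\bP$-expectation $\sum_k N^{-k}\lambda_k<\infty$ (using $\lambda_k\le\lambda^*_k$ and \eqref{ak:lambda-growth-condition}), hence it is $\bP$-a.s.\ finite.
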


We denote the solution of the $(L^{(G)*}(\omega), \mathcal{C}^{(G)},\delta_\pi)$-martingale problem 
by 
\be{dualdef}
\mathfrak{C}^{(G)}(\omega) = \big( \mathfrak{C}^{(G)}(\omega;t) \big)_{t \geq 0},
\qquad \mathfrak{C}^{(G)}(\omega;0) = \delta_\pi.
\ee
For every $n\in\N$, when restricted to $\Pi_{G,n}$,  $\mathfrak{C}^{(G)}(\omega)$ becomes a 
strong Markov process $\mathfrak{C}_n^{(G)}(\omega)$ with the Feller property.

\subsection{Dualities}
\label{ss.dualities}

Consider the map
\be{qq:duality}
\begin{aligned}
H^{(n)}_\varphi(x , \pi_{G,n})
& = \int_{E^{b}}
\left(\bigotimes_{i=1}^{b} x_{\eta_{\pi^{-1}(i)}}\big(\dd u_i\big)\right)
\varphi\big(u_{\pi(1)},u_{\pi(2)},\ldots,u_{\pi(n)}\big),
\end{aligned}
\ee
where $n\in\N$, $\phi \in C_{\rm b}(\CP(E)^n)$, $x = (x_{\eta})_{\eta \in G}\in\CP(E)^{G}$, $\pi_{G,n} 
\in \Pi_{G,n}$, $b = b(\pi_{G,n}) = |\pi_{G,n}|$.

\begin{theorem}[{\bf Duality}]
\label{T.dual}
Fix $N\in\Ntwo$. For each of the choices $G$ in \eqref{Gchoices} and \eqref{Gchoices*},
\be{e1855} 
E[H^{(n)}_\varphi(X^G(\omega,t), \Pi_{G,n})]= E [H^{(n)}_\varphi(X^G,C^G(\omega,t))]
\ee
for all $n\in\N$ and $\phi \in C_{\rm b}(\CP(E)^n)$, where the same $\omega$ is used on both sides. \hfill $\square$
\end{theorem}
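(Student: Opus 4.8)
The plan is to establish the duality relation \eqref{e1855} via the standard route for moment dualities of measure-valued processes: generator calculus combined with uniqueness of the two martingale problems. First I would fix $\omega$ (the random environment enters only as a parameter and is held fixed throughout, identically on both sides), fix $n\in\N$ and $\varphi\in C_{\mathrm b}(\CP(E)^n)$, and regard
$H^{(n)}_\varphi\colon\,\CP(E)^G\times\Pi_{G,n}\to\R$
as a function of two variables, linear (in fact polynomial) in the first and bounded in the second. The key computation is to show that $H^{(n)}_\varphi$ is a \emph{duality function} for the pair $\big(L^{(G)}(\omega),\,L^{(G)*}(\omega)\big)$, i.e.
\be{dualgen}
\big(L^{(G)}(\omega)\,H^{(n)}_\varphi(\cdot,\pi_{G,n})\big)(x)
=\big(L^{(G)*}(\omega)\,H^{(n)}_\varphi(x,\cdot)\big)(\pi_{G,n})
\ee
for all $x\in\CP(E)^G$ and all $\pi_{G,n}\in\Pi_{G,n}$. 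Once \eqref{dualgen} is in hand, the identity \eqref{e1855} follows by the usual argument: both sides of \eqref{e1855}, as functions of $t$, solve the same linear evolution equation with the same initial datum $H^{(n)}_\varphi(X^G,\Pi_{G,n})$; since $H^{(n)}_\varphi$ is bounded and the two martingale problems are well-posed (Theorem~\ref{P.vecLambda}/Theorem~\ref{T.wpbasic} for the Cannings side, Theorem~\ref{T.existence-coalescent} for the coalescent side), a Gronwall/semigroup argument gives equality for all $t\ge 0$. For the approximating spaces $G=G_{N,K}$ one runs the same argument with $c_k=0$ for $k>K$, and for $G=\{0,\ast\}$ with the mean-field kernel; nothing changes structurally.

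The heart of the matter is the generator identity \eqref{dualgen}, which splits according to the decompositions $L^{(G)}(\omega)=L_{\mathrm{mig}}+L_{\mathrm{res}}(\omega)$ and $L^{(G)*}(\omega)=L^*_{\mathrm{mig}}+L^*_{\mathrm{coal}}(\omega)$. The migration parts match by the classical Feynman--Kac-type computation: applying $L_{\mathrm{mig}}$ acts on $H^{(n)}_\varphi$ through the single-colony derivative $\partial F/\partial x_\eta[\delta_a]$, replacing one factor $x_\eta$ by $x_\zeta$ at rate $a^{(N)}(\eta,\zeta)$; on the dual side this is exactly a partition block jumping from $\eta$ to $\zeta$ under the conjugate kernel $a^*$, which is what $L^*_{\mathrm{mig}}$ in \eqref{no3} encodes. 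For the resampling/coalescence parts, the crucial observation is the \emph{reshuffling-resampling} identity: applying the map $\Phi_{r,a,B_{|\xi|}(\xi)}$ and then integrating $a$ against $y_\xi$ and Taylor-expanding as in \eqref{ak:9999} produces, at order $r^l$, exactly the event that $l$ of the sampled coordinates $u^1,\dots,u^n$ lying in $B_{|\xi|}(\xi)$ are forced to be equal (to the common resampled type) and are then re-sampled from the reshuffled block average --- which on the dual side is precisely ``coalesce the $l$-tuple $J$ into one block, then reshuffle all blocks in $B_{|\xi|}(\xi)$''. Matching the combinatorial weights $r^l(1-r)^{b-l}$ against $\Lambda^\xi(\omega)(\mathrm dr)/r^2$ reproduces the rates $\lambda^{(\xi)}_{b,l}(\omega)$ in \eqref{eq:lambda-coalescence-rates-l}; the diffusive term $L^{d_0}_\eta$ pairs with the $l=2$, single-site coalescence (Kingman) part with rate $2d_0$ in the standard Fleming--Viot--coalescent duality. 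Summing over $\eta$ (resp. $\xi$), over $k\in\N_0$, and over subsets $J$ gives \eqref{dualgen}. Since $\omega$ enters only through the measures $\Lambda^\xi(\omega)$ and these appear identically in $L_{\mathrm{res}}(\omega)$ and $L^*_{\mathrm{coal}}(\omega)$, the matching is $\omega$-by-$\omega$.

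The main obstacle is technical rather than conceptual: one must justify the interchange of the infinite sum over hierarchical levels $k\in\N_0$ (and over the infinitely many $\eta\in\Omega_N$, when $G=\Omega_N$) with the expectations and the Taylor expansion, and verify that $H^{(n)}_\varphi$ lies in the domain of both generators. This is where assumptions \eqref{ak:recurrence-cond}, \eqref{ak:lambda-growth-condition} and \eqref{ak1000} do their work: \eqref{ak1000} makes the order-$r^2$ remainder in \eqref{ak:9999} integrable against $\Lambda^\xi(\omega)^*$ (cf.\ Remark~\ref{r.711}), while the $\limsup$ conditions bound the total migration and resampling rates per individual, so the level-$k$ contributions are summable uniformly in the starting configuration. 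For $G=\Omega_N$ one should actually first prove the identity on the finite spaces $G_{N,K}$, where all sums are finite and the argument is clean, and then pass to the limit $K\to\infty$ using the Feller property and continuity of $H^{(n)}_\varphi$ together with the convergence of $\mathfrak C^{(G_{N,K})}(\omega)$ to $\mathfrak C^{(\Omega_N)}(\omega)$; since $H^{(n)}_\varphi$ depends only on finitely many coordinates, both sides of \eqref{e1855} depend continuously on this limit. As the excerpt notes, this argument is verbatim the same as in the homogeneous case \cite{GHKK14}, the only new point being that the random environment is carried along as a frozen parameter; hence no separate detailed proof is required.
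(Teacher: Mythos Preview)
Your proposal is correct and follows exactly the paper's approach: the duality is reduced to the generator relation \eqref{dualgen} (which is the paper's \eqref{generator-duality}), and this relation is verified componentwise (migration vs.\ migration, resampling vs.\ coalescence-with-reshuffling, Fleming--Viot vs.\ Kingman) just as in the homogeneous case of \cite{GHKK14}, with $\omega$ carried along as a frozen parameter. Your write-up is in fact considerably more detailed than the paper's one-line reduction to \cite{GHKK14}, but the strategy is identical.
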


\noindent
This theorem is a consequence of the generator relation 
\be{generator-duality} 
\left(L^{(G)}(\omega) H^{(n)}_\varphi(\cdot,\pi_{G,n}) \right)(x) 
= \left( L^{(G)*}(\omega) H^{(n)}_\varphi(x,\cdot) \right)(\pi_G)
\quad \text{for $\mathbb{P}$-a.e.~$\omega$}.
\ee
This relation has been verified for the homogeneous model in \cite{GHKK14}, but here works 
the same.

\subsection{Well-posedness of the martingale problems and equilibria}
\label{ss.dualproofs}

Theorem~\ref{T.wpbasic} can be formulated for geographic spaces that are countable Abelian 
groups, in particular, the hierarchical group and the Euclidean lattice. For us the following 
generalization suffices.

\begin{theorem}[{\bf Well-posedness}]
\label{T.dualwellpos}
For each of the choices $G$ in \eqref{Gchoices} and \eqref{Gchoices*} the following holds: 
For $\bP$-a.e.\ $\omega$ and every $\pi \in \Pi_G$, the $(L^{(G)}(\omega), \mathcal{C}^{(G)}, 
\delta_{\pi})$-martingale problem is well-posed. \hfill $\square$
\end{theorem}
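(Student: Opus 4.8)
The plan is to reduce the well-posedness of the $(L^{(G)}(\omega),\mathcal{C}^{(G)},\delta_\pi)$-martingale problem to the duality relation \eqref{generator-duality} together with the well-posedness of the dual coalescent martingale problem established in Theorem~\ref{T.existence-coalescent}. This is the standard route for measure-valued processes of Fleming-Viot / Cannings type: existence of a solution is obtained by a tightness/approximation argument, and uniqueness is obtained from duality. The only genuinely new feature here is the frozen random environment $\omega$, but since $\omega$ is fixed throughout (we work in the quenched setting), the generator $L^{(G)}(\omega)$ is for $\bP$-a.e.\ $\omega$ a bona fide well-defined operator on $\mathcal{C}^{(G)}$ — this uses the moment bounds in \eqref{Aprop} together with the growth conditions \eqref{ak:recurrence-cond} and \eqref{ak:lambda-growth-condition}, which guarantee that the total migration and resampling rates per site are $\bP$-a.s.\ finite. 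So the proof is essentially the proof of Proposition~\ref{P.vecLambda} run for a fixed realisation of the environment.

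First I would check that for $\bP$-a.e.\ $\omega$ the operator $L^{(G)}(\omega)$ maps $\mathcal{F}$ into $C_{\mathrm b}$, i.e.\ that the sums defining $L^{(\Omega_N)}_{\mathrm{mig}}$ and $L^{(\Omega_N)}_{\mathrm{res}}(\omega)$ converge: for migration this is exactly the condition \eqref{ak:recurrence-cond}, and for resampling one invokes Remark~\ref{r.711} (the $O(r^2)$ Taylor estimate) together with the second moment bound on $\rho^\xi(\omega)$ in \eqref{Aprop} and \eqref{ak:lambda-growth-condition}; by Borel–Cantelli (or stationarity and a first-moment bound on the per-site rate $\sum_k N^{-k}\lambda^*_k\rho^{\mathrm{MC}_k(0)}(\omega)$) this holds for a $\bP$-full set of $\omega$. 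Second, for \emph{existence} I would construct a solution by the usual approximation scheme: truncate the interaction to finitely many levels $k\le K$ and to finitely many colonies (the spaces $G_{N,K}$ in \eqref{Gchoices}), where the martingale problem is well-posed by finite-dimensional SDE theory for a fixed $\omega$, then pass to the limit using the compactness of $E$ (hence of $\mathcal{P}(E)^G$) and a standard tightness argument in path space; the limit points solve the $L^{(G)}(\omega)$-martingale problem. Third, for \emph{uniqueness} I would invoke the generator duality \eqref{generator-duality}, which is asserted in the excerpt to hold for $\bP$-a.e.\ $\omega$ exactly as in \cite{GHKK14}: any solution $X^{(G)}(\omega)$ satisfies the duality identity \eqref{e1855} with the dual coalescent $\mathfrak{C}^{(G)}(\omega)$, and since the functions $H^{(n)}_\varphi(\cdot,\pi_{G,n})$ of the form \eqref{qq:duality} are measure-determining on $\mathcal{P}(E)^G$ (again using compactness of $E$), the one-dimensional marginals of any solution are pinned down; combined with the Markov property this gives uniqueness of the law. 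Well-posedness then yields the strong Markov and Feller properties in the usual way.

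The main obstacle is verifying \eqref{generator-duality} and the finiteness of $L^{(G)}(\omega)$ uniformly enough in $\omega$ to get a single $\bP$-full exceptional set, rather than an $\omega$-dependent one that could in principle be null for each test function but fail to be simultaneously good. Concretely, the delicate point is that the per-site resampling rate now involves the environment along a whole ray of the tree, $\sum_{k\ge 0}N^{-2k}\cdot N^k\,\lambda^*_k\rho^{\mathrm{MC}_k(0)}(\omega)=\sum_{k\ge0}N^{-k}\lambda^*_k\rho^{\mathrm{MC}_k(0)}(\omega)$, and one must show this is $\bP$-a.s.\ finite; this follows from \eqref{Aprop} (first moment $1$) and \eqref{ak:lambda-growth-condition} by a Borel–Cantelli argument, after which everything localises and the computations reduce verbatim to the homogeneous case in \cite{GHKK14}. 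For the other two geographic spaces in \eqref{Gchoices} and \eqref{Gchoices*} the sums are finite and there is nothing extra to check beyond substituting the $\omega$-dependent rates \eqref{cNomega}–\eqref{lambdaxi} into the finite-state / mean-field computations of \cite{GHKK14}.
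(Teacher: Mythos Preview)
Your approach is correct and matches the paper's, which simply defers to \cite{GHKK14} (the paper states that Theorems~\ref{T.existence-coalescent}--\ref{T.duallongrun} ``do not need a separate proof: this is verbatim the same as the proof for the homogeneous environment''). The existence-via-approximation / uniqueness-via-duality scheme you outline is indeed the standard route taken there, following Evans~\cite[Theorem 4.1]{E97}.

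One remark: you have overcomplicated the task. Theorem~\ref{T.dualwellpos} is stated only for the \emph{finite} geographic spaces $G_{N,K}$ in \eqref{Gchoices} and the two-point space in \eqref{Gchoices*}, not for $G=\Omega_N$. For each such $G$ the number of sites is finite and the environment enters through only finitely many random variables, so the sums defining $L^{(G)}(\omega)$ are trivially finite for \emph{every} $\omega$; the Borel--Cantelli argument and the per-ray summability estimate you worry about are unnecessary here. (Those concerns are exactly what is needed for Theorem~\ref{T.wpbasic} on $\Omega_N$, which the paper derives from Theorem~\ref{T.dualwellpos} in Section~\ref{ss.dualproofsthms} by the approximation $G_{N,K}\uparrow\Omega_N$.) Your final paragraph already acknowledges this, but the bulk of the proposal is devoted to the wrong case.
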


\begin{theorem}[{\bf Equilibrium}]
\label{T.duallongrun}
Fix $N\in\Ntwo$. Fix $n\in\N$ and start the $\mathfrak{C}^{(\Omega_N)}(\omega)$-process in 
a labelled partition $\{(\pi_i,\eta_i)\}_{i=1}^{n}$, where $\{\pi_i\}_{i=1}^n$ form a partition of $\N$ 
and $\{\eta_i\}_{i=1}^n$ represent the labels. If $x$ is a random state with mean $\theta \in \CP(E)$ 
whose law is invariant and ergodic under translations, then
\be{ag40}
\lim_{t\to\infty} \CL \left[H^{(n)}_\varphi\left(x, \mathfrak{C}^{(\Omega_N)}_n(\omega; t)\right)\right]
= \CL\left[H^{(n)}_\varphi\left(\underline{\theta},\mathfrak{C}^{(\Omega_N)}_n(\infty)\right)\right]
\quad \text{for $\mathbb{P}$-a.e.~$\omega$}
\ee
for all $\phi \in C_{\rm b}(\CP(E)^n)$. \hfill $\square$
\end{theorem}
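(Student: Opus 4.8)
The plan is to reduce the long-time behaviour of $H^{(n)}_\varphi(x,\mathfrak{C}^{(\Omega_N)}_n(\omega;t))$ to two ingredients: (i) the a.s.\ convergence of the spatial coalescent $\mathfrak{C}^{(\Omega_N)}_n(\omega;t)$ to a limiting partition structure as $t\to\infty$, and (ii) a spatial mixing (ergodicity) argument showing that, along the way, the labels of distinct partition blocks have separated far enough in $\Omega_N$ that the corresponding coordinates of the ergodic random field $x$ become asymptotically independent, hence can be replaced by $\underline{\theta}$. First I would fix $n\in\N$, $\varphi\in C_{\mathrm b}(\CP(E)^n)$, and $\bP$-a.e.\ $\omega$ for which Theorem~\ref{T.existence-coalescent} and the duality relation \eqref{generator-duality} hold. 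Since $\mathfrak{C}^{(\Omega_N)}_n(\omega)$ has only finitely many blocks (at most $n$) and the number of blocks is non-increasing, there is an a.s.\ finite random time after which no further coalescence events occur; from that time on the process is a collection of (at most $n$) independent hierarchical random walks in the environment $\omega$ — one per surviving block — together with the induced reshuffling events. The first key step is therefore a \emph{dichotomy on pairs}: for each pair of blocks I would show that, conditionally on them never coalescing, the hierarchical distance between their labels tends to $\infty$ a.s.\ Here I would invoke the machinery of Section~\ref{s.randomwalk} (the analysis of two hierarchical random walks in the same environment and the associated coalescence-versus-escape computation based on the recurrence condition \eqref{ak:recurrence-cond}); what one needs is merely that conditionally on non-coalescence the pair difference is eventually non-recurrent, not the full dichotomy of Theorem~\ref{T.coexcritNfin}.

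The second key step is the \emph{replacement of $x$ by $\underline\theta$}. Write $H^{(n)}_\varphi(x,\pi_{G,n})$ from \eqref{qq:duality} as an integral against $\bigotimes_{i=1}^b x_{\eta_i}(\dd u_i)$, where $\eta_1,\dots,\eta_b$ are the (random, $\omega$- and $t$-dependent) labels of the $b$ surviving blocks. By Step~1, for $\bP$-a.e.\ $\omega$ and $\mathfrak C$-a.s., the labels $\eta_1,\dots,\eta_b$ eventually lie in pairwise disjoint blocks of arbitrarily large radius. Because the law of $x$ is invariant and \emph{ergodic} under translations of $\Omega_N$, the multi-coordinate marginals $(x_{\eta_1},\dots,x_{\eta_b})$ converge in distribution to $\theta^{\otimes b}$ as the mutual distances diverge; more precisely, for any $\varepsilon>0$ there is $R=R(\varepsilon)$ such that whenever the $\eta_i$ lie in disjoint $R$-blocks, the expectation of any bounded functional of $(x_{\eta_i})_{i}$ is within $\varepsilon$ of its value under $\theta^{\otimes b}$. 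This is the standard spatial-ergodicity estimate (a covariance/mixing bound, using that test functions of the form \eqref{qq:duality} are continuous and bounded). Combining with the a.s.\ convergence of the partition component of $\mathfrak C$ to its limit $\mathfrak C^{(\Omega_N)}_n(\infty)$ (the partition-valued limit, with labels sent to ``infinity''), and using dominated convergence in $\omega$ and in the coalescent randomness, yields \eqref{ag40}.

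The remaining bookkeeping is to make precise what $\mathfrak C^{(\Omega_N)}_n(\infty)$ is: it is the limit in distribution of the \emph{partition} part of $\mathfrak C^{(\Omega_N)}_n(\omega;t)$, which exists because the partition is a finite-state non-increasing-complexity process and — by Step~1 — the environment-dependent escape probabilities converge; the labels play no role in the limiting object precisely because of Step~2. One then checks that the limiting law on the right-hand side of \eqref{ag40} does not depend on $\omega$: the \emph{distribution} of the terminal partition is governed by pairwise coalescence-versus-escape probabilities which, by the triviality of the sigma-algebra at infinity \eqref{fdh:tail} (or rather its consequence that only the average medium matters, cf.\ the discussion around Theorem~\ref{T.coexcritNfin}), are $\bP$-a.s.\ constant. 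I expect the \textbf{main obstacle} to be Step~1 in the random-environment setting: as flagged in the remark after Theorem~\ref{T.ltbeq}, the difference of two hierarchical random walks in the \emph{same} environment is not itself a random walk, so one cannot directly quote the Dawson–Gorostiza–Wakolbinger recurrence criterion. The resolution is to do this computation at the level of the \emph{pair} process on $\Omega_N\times\Omega_N$ with its environment-modulated coalescence rates \eqref{cNomega}, exactly the analysis carried out in Section~\ref{s.randomwalk}, and to extract from it only the qualitative statement ``conditioned to not coalesce, the pair distance $\to\infty$ a.s.'', which is weaker than (and hence implied by the tools behind) Theorem~\ref{T.coexcritNfin}.
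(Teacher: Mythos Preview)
Your proposal diverges from the paper's argument and, more importantly, Step~2 contains a real gap. The claim that translation-invariance and ergodicity of $x$ force $(x_{\eta_1},\dots,x_{\eta_b})\Rightarrow\theta^{\otimes b}$ as the mutual hierarchical distances diverge is a \emph{mixing} property, strictly stronger than ergodicity, and is not among the hypotheses. Already for $b=1$ your ``for any $\varepsilon$ there is $R$\ldots'' assertion would say that the law of $x_{\eta}$ is close to $\delta_\theta$ once $\eta$ lies in a large block, but by translation invariance the law of $x_\eta$ equals that of $x_0$ for every $\eta$ and is in general not concentrated on $\theta$. For $b\geq2$, ergodic non-mixing fields on $\Omega_N$ give counterexamples to asymptotic factorisation. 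So Step~2, as written, does not go through, and Step~1 (pair separation via the Section~\ref{s.randomwalk} analysis) cannot rescue it.

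The paper takes a different and more direct route. Rather than arguing that surviving labels \emph{separate}, it shows that each single label \emph{equidistributes}: for the hierarchical random walk in the environment $\omega$ (with the reshuffling-augmented migration coefficients \eqref{cNomega}) the transition kernel $p_t^\omega(\eta,\cdot)$ converges to the Haar measure on $\Omega_N$. This is the random-environment analogue of the Evans--Fleischmann averaging property \eqref{ag14}, and the paper proves it by an explicit Poisson-clock construction on $\Omega_N^{\mathbb{T}}\setminus\Omega_N$: the walk at time $t$ is uniform on the ball of radius $K_t(\eta)$, where $K_t(\eta)$ is the highest level at which a clock on the ancestral line above $\eta$ has rung by time $t$, and $K_t(\eta)\to\infty$ a.s. With this in hand, the remainder is the argument of \cite[Section~3]{DGV95}: equidistribution of the labels combined with the spatial ergodic theorem for $x$ lets one replace each $x_{\eta_i(t)}$ by $\theta$ in the multilinear functional $H^{(n)}_\varphi$, with no mixing hypothesis on $x$ and without invoking the pair-process estimates of Section~\ref{s.randomwalk}.
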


In order to prove Theorem~\ref{T.duallongrun}, we follow the argument in \cite[Section 3]{DGV95}). 
The partition-valued process converges to a limiting partition. If the locations of the partition elements 
would follow a homogeneous random walk, then the key to the argument would be the averaging property one can prove via Fouries analysis
\be{ag14}
\lim_{t\to\infty} \sum_{\zeta \in \Omega_N} p_t(\eta,\zeta) f(\zeta) 
= \int_{\Omega_N} f(\xi)\nu(\dd\xi) \qquad \forall\,\eta \in \Omega_N,\,f \in C_b(\Omega_N),
\ee
where $p_t(\cdot,\cdot)$ is the time-$t$ transition kernel of the random walk on $\Omega_N$ and 
$\nu(\cdot)$ is the Haar measure on $\Omega_N$ (see Evans and Fleischmann~\cite{EF96}). 
We need to show that the same holds for our random walk in random environment $\omega$, which 
goes as follows.

Place a Poisson clock at every $\xi\in\Omega_N^{\mathbb{T}}\backslash\Omega_N$. 
Let the clock at $\xi$ ring at rate
\be{PC}
N^{-(|\xi|-1)} \big[c_{|\xi|-1} + N^{-1}\lambda^\xi(\omega)\big].
\ee 
At any moment of time let the random walk look at the ancestral line above its current position 
(see Fig.~\ref{fig-hierartree}) and redistribute itself uniformly over the block around its current 
position whose hierarchical label corresponds to the height of the first clock on that ancestral line 
that rings. The resulting random walk is the same as the hierarchical random walk in environment 
$\omega$ with migration coefficients given by \eqref{cNomega}. 

Next, let $K_t(\eta)$ be the highest hierarchical level at which prior to time $t$ a Poisson clock 
that lies on the ancestral line above $\eta$ has rang. Then at time $t$ the random walk starting 
from $\eta$ is uniformly distributed on the $K_t(\eta)$-block around $\eta$. Hence we have
\be{PC1}
\sum_{\zeta \in \Omega_N} p_t^\omega(\eta,\zeta) f(\zeta) 
= \sum_{k\in\N_0} P(K_t(\eta)=k) \left[ N^{-k} \sum_{\zeta \in B_k(\eta)} f(\zeta)
+ \sum_{\zeta \in \Omega_N \backslash B_k(\eta)} p_t^\omega(\eta,\zeta) f(\zeta)\right],
\ee
where $p_t^\omega(\cdot,\cdot)$ is the time-$t$ transition kernel of the random walk in $\omega$. 
Fix $\eta \in \Omega_N$ and $f \in C_b(\Omega_N)$. The first term between the square brackets 
in \eqref{PC1} tends to $\int_{\Omega_N} f(\xi)\nu(\dd\xi)$ as $k \to \infty$. The second term is 
bounded from above by  $\|f\|_\infty\,p_t^\omega (\eta,\Omega_N \backslash B_k(\eta))$, which 
tends to zero as $k\to\infty$. Finally, since all Poisson clocks ring at a strictly positive rate, we 
have 
\be{PC2}
P\left(\lim_{t\to\infty} K_t(\eta) = \infty\right) =1. 
\ee
It therefore follows that the right-hand side of \eqref{PC1} tends to the right-hand side of 
\eqref{ag14} as $t\to\infty$.

\subsection{Consequences for the Cannings process}
\label{ss.dualproofsthms}

The claims in Theorems~\ref{T.wpbasic}--\ref{T.ltbeq} follow from 
Theorems~\ref{T.dualwellpos}--\ref{T.duallongrun}. As argued in \cite{GHKK14}, the proof 
follows the strategy for the two-type case given in Evans~\cite[Theorem 4.1]{E97}, which says 
that for spatial coalescent processes well-posedness and existence carry over from the dual 
process to the original process. 

We next prove Corollary~\ref{c.1282}.

\begin{proof}
We analyze both $\nu_\theta^N$ and $\nu_\theta$ with the help of duality relations and show 
that the dual representation of the former converges to the dual representation of the latter.

\medskip\noindent
{\bf Step 1: $\nu_\theta$.} 
We have to construct a dual process for the entrance law of a Markov chain, namely, the interaction 
chain running from level $\infty$ down to level $0$. We consider the process that is dual to the 
interaction chain at level $j$. This dual process is a discrete-time Markov chain whose transition 
kernel we can determine, for fixed $j$ and in the limit as $j\to\infty$, via an explicit construction. 
This dual Markov chain is a spatial coalescent on $\{0,1,\ldots,j\}$, or on $\N$ when we consider 
all $j$ simultaneously and are interested in its limit state as $ j\to\infty$. 

We first focus on the dual transition kernel at one particular level. In the interaction chain this is 
defined via the equilibrium of the McKean-Vlasov process. How did this equilibrium arise? We 
consider a {\em mean-field} system of size $N^k$ with parameters $c_k$, $d_k$, $\Lambda_k$ 
and take the mean-field dual started in $n$ individuals at mutual distance $k$. This dual is shown 
to {\em converge}, in the limit as $N\to\infty$ and on time scale $t_N N^k$ with $t_N \to \infty$ 
and $t_N = o(N)$, to a limiting process that is a coalescent on the geographic space $k\cup
\{\bigtriangleup\}$, with $\bigtriangleup$ a cemetery state, such that the process jumps from 
$k$ to $\bigtriangleup$ at rate $c_k$ and does Kingman coalescence at rate $d_k$ and
$\Lambda$-coalescence according to $\Lambda_k(\mathrm{MC}_k(0))(\omega)$. This limiting process 
is run for infinite time to obtain the dual transition kernel at the $k$-th step. This partition at 
$\bigtriangleup$ is used as input for the next step of the dual with label $k+1$. Altogether this 
procedure defines the full Markov chain, i.e., the new site and the new partition element. We 
denote the path of the dual Markov chain by
\begin{equation}
\label{e1973}
(\Pi^\infty_k)_{k \in \N_0},\, \mbox{with $ \Pi^\infty_\infty $ its limiting state as   } k \to \infty.
\end{equation}
(Recall that partitions are ordered and hence the limiting state exists.) The line of argument is the 
same for each $k$. A detailed argument can be found in \cite[Corollary 2.12]{GHKK14}.

We need an explicit description as an $\N$-marked partition-valued process, namely, the above 
mentioned random walk, moving one step to the right on $\N$, doing Kingman coalescence at 
rate $d_k$ and $\Lambda$-coalescence according to $ \Lambda_k(\mathrm{MC}_k(\eta))$ 
in state $k$, provided the rate-$c_k$ clock does not ring first.

The dual chain after $j$ steps gives the expression $E_\theta[\langle M^{(j)}_0,f\rangle^n]
(\omega)$, which in the limit as $j\to\infty$ equals $\int^1_0 \Pi_\eta \nu_\theta (\omega)(\dd x)\,
\langle x_\eta,f \rangle^n$ by the definition of $\nu_\theta(\omega)$. The dual expectation is the 
expression $E[\langle \theta,f\rangle^{\mid \Pi_\infty^\infty \mid}](\omega)$. It therefore suffices 
to show that the latter is obtained from the dual representation of $\nu^N_\theta(\omega)$ as 
$N\to\infty$.

\begin{remark}
\label{r.1992}
{\rm What is the dual counterpart of Theorem~\ref{mainth}? The connection between the renormalized 
system and the interaction chain on the level of the dual is as follows. Consider the dual process for 
the $j$-level \emph{hierarchical} system for finite $N$, starting with $n$ partition elements at one site and 
letting $t\to\infty$ and $N\to\infty$ in the following way.  Consider time scales $(t^k_N)_{k\in\N_0}$ 
with $t^k_N/N^{k+1} \to 0$ and $ t^k_N/N^k\to\infty$ as $N\to\infty$. Then the coalescent reaches 
a partition $\Pi^{k+1}_\infty$, with the remaining partition elements in $B_k$ uniformly distributed. 
After that move to the next time scale. Finally, first take $N\to\infty$ and then take
\begin{equation}
\label{e1626}
\Pi^\infty_\infty \mbox{ as the limiting partition element for   } j \to \infty.
\end{equation}
By our scaling result in \eqref{ag:interaction-chain}, this object gives us the dual process of the 
interaction chain at level $j$.} \hfill $\square$
\end{remark}

\medskip\noindent
{\bf Step 2: $\nu^N_\theta$.}
We return to the representation of $\nu^N_\theta$, respectively, its marginal law at level $0$.
The convergence of the dual chain for the Cannings process on $\Omega_N$, and its limit as 
$t\to\infty$ followed by $N\to\infty$ to the dual chain of the interaction chain, will follow from 
the fact that the partitions become successively finer and hence converge to a limit partition, 
and the fact that the time scales for the random walk to reach distance $k$ separate as $N\to\infty$. 
Since the monomials are convergence determining, this will yield the claim.

We have to show that the coalescent on $\Omega_N$, starting with $n$ individuals at site 
$0$, converges to a limit process as $t\to\infty$, which we can investigate in the limit as 
$N\to\infty$. We need to show that this process has the property that, when we consider the 
times where the coalescent makes jumps to the next larger block, we get an embedded Markov 
chain with index in $\N_0$, describing the successive maximal jump sizes and values in partitions. 
The corresponding partition converges to $ \Pi^N_\infty $ as the index $ k $ tends to infinity. The 
claim is that, as $N\to\infty$, this Markov chain converges to a birth process in the first component, 
which moves one step to the right, with Kingman-coalescence at rate $d_k$ and 
$\Lambda$-coalescence according to $\Lambda_k(\mathrm{MC}_k(0))(\omega)$, provided the 
rate-$c_k$ clock does not ring first. This gives $ \Pi^\infty_k $. As $ k \to \infty $ we get \eqref{e1626}. 
This is done in Dawson and Greven~\cite{DG96} for the Kingman coalescent, but the necessary 
modification is straightforward.
\end{proof}

\section{Dichotomy: coexistence versus clustering}
\label{s.randomwalk}

In this section, we prove Theorem~\ref{T.coexcritNfin}. The question is whether $\mathfrak{C}^{(G)}
(\omega)$, the hierarchical coalescent in the environment $\omega$ defined in \eqref{dualdef}, 
converges to a single labelled partition element as $t\to\infty$ with probability one. To answer this 
question, we have to investigate \emph{whether two tagged partition elements coalesce with 
probability one or not}. Recall that, by the projective property of the coalescent, we may focus 
on the subsystem of just two dual individuals, because this translates into the same dichotomy 
for $\mathfrak{C}_n^{(G)}(\omega)$ for any $n\in\N$, and hence for the entrance law starting 
from $n$ partition elements. However, there is additional reshuffling at all higher levels, which is 
triggered by a corresponding block-coalescence event. Therefore, we need to consider two coalescing 
random walks with slightly {\em adapted} migration coefficients, lacking in particular the random 
walk property.

Recall the notation introduced in Sections~\ref{sss.hg}--\ref{sss.resh} and \ref{sss.RE}.
Recall that $\mathrm{MC}_k(\eta)$ is the unique site at height $k$ above $\eta\in\Omega_N$ 
(see Fig.~\ref{fig-hierartree}). Consider two independent copies 
\be{RWs}
Y(\omega) = (Y_t(\omega))_{t\geq 0}, \qquad 
Y^\prime(\omega) = (Y^\prime_t(\omega))_{t\geq 0},
\ee 
of the hierarchical random walk on $\Omega_N$ in the environment $\omega$ with migration 
coefficients given by \eqref{cNomega} and coalescence rates given by \eqref{lambdaxi}. Write 
$P^\omega, P^{\omega,\prime}$ for the marginal laws of  $Y(\omega),Y^\prime(\omega)$ and 
$\bar{P}^\omega = P^\omega \times P^{\omega,\prime}$ for the joint law of the pair $\bar{Y}
(\omega)=(Y(\omega),Y^\prime(\omega))$. Consider the time-$t$ accumulated hazard for 
coalescence:
\be{ak12}
H_N(\omega;t) = \sum_{k\in\N_0} N^{-k} 
\sum_{\substack{\eta, \eta^\prime \in \Omega_N \\ d_{\Omega_N}(\eta,\eta^\prime) \leq k}}
\lambda^{\mathrm{MC}_k(\eta)}(\omega) 
\int^t_0 1_{\{Y_s(\omega) = \eta,Y^\prime_s(\omega) = \eta^\prime\}}\, \dd s,
\ee
where we use that $\mathrm{MC}_k(\eta)=\mathrm{MC}_k(\eta^\prime)$ when $d_{\Omega_N}
(\eta,\eta^\prime) \leq k$. The rate $N^{-2k}$ to choose a $k$-block for the coalescence is 
multiplied by $N^k$ because all partition elements in that block can trigger a coalescence event, 
which explains the factor $N^{-k}$ in \eqref{ak12}. 

Let $\lim_{t\to\infty} H_N(\omega;t) = H_N(\omega;\infty)$. We have coalescence of the two random 
walks (``common ancestor'') with probability 1 when $H_N(\omega;\infty) = \infty$ $\bar{P}^\omega$-a.s., 
but separation of the two random walks (``different ancestors'') with positive  probability when 
$H_N(\omega;\infty) < \infty$ $\bar{P}^\omega$-a.s. In Section~\ref{ss.meanhazard} we identify 
the dichotomy for the mean hazard $\bar{E}^\omega[H_N(\omega;\infty)]$ combining Fouries analysis 
with potential theory of reversible Markov chains to handle the fact that our migration is no longer 
a random walk. In Section~\ref{ss.zero-one} we use a zero-one law to show that the same dichotomy 
holds for the hazard $H_N(\omega;\infty)$.

\subsection{Mean hazard}
\label{ss.meanhazard}

\bl{meanhazard}
For every $N \in \Ntwo$ and $\bP$-a.e.\ $\omega$, 
\be{dich}
\bar{E}^\omega[H_N(\omega;\infty)] = \infty 
\quad \Longleftrightarrow \quad 
\sum_{k\in\N_0} \frac{1}{c_k + N^{-1}\lambda_{k+1}} \sum_{l=0}^k \lambda_l = \infty.
\ee 
\hfill $\square$
\el

\bpr\label{p.1958}
Write
\be{trker}
p^\omega_t(\eta,\zeta) = P^\omega\{Y_t(\omega) = \zeta \mid  Y_0(\omega) = \eta\}, 
\quad \eta, \zeta \in \Omega_N,
\ee
to denote the time-$t$ transition kernel. Let 
\be{jointGreen}
G^\omega((0,0),(\eta,\eta^\prime)) = \int_0^{\infty} p^\omega_t(0,\eta) 
p^\omega_t(0,\eta^\prime)\, \dd t,
\qquad (\eta,\eta^\prime) \in \Omega_N \times \Omega_N,
\ee
denote the Green function for $\bar{Y}(\omega)$. Then \eqref{ak12} gives
\be{eq:expected-hazard}
\begin{aligned}
\bar{E}^\omega[H_N(\omega;\infty)] 
&= \sum_{k \in \N_0} N^{-k} 
\sum_{\substack{\eta, \eta^\prime \in \Omega_N \\ d_{\Omega_N}(\eta,\eta^\prime) \leq k}}
\lambda^{\mathrm{MC}_k(\eta)}(\omega)\, G^\omega((0,0),(\eta,\eta^\prime))\\
&= \sum_{k \in \N_0} N^{-k} \sum_{\xi \in \Omega_N^{(k)}} \lambda^\xi(\omega) 
\sum_{\substack{\eta \in \Omega_N \\ \mathrm{MC}_k(\eta)=\xi}}
\sum_{\eta' \in B_k(\eta)} G^\omega((0,0),(\eta,\eta^\prime))\\
&= \sum_{k \in \N_0} N^{-k} \sum_{\xi \in \Omega_N^{(k)}} \lambda^\xi(\omega) 
\sum_{\eta,\eta^\prime \in B_{|\xi|}(\xi)} G^\omega((0,0),(\eta,\eta^\prime))
\end{aligned}
\ee
(recall \eqref{Bkequivdef}). The proof comes in two steps. In Step 1, we pretend that the 
$\omega$-dependent term in the right-hand side of \eqref{cNomega} is replaced by its 
mean, i.e., the two hierarchical random walks are homogenous with migration coefficients 
$\bar{c}_k$ given by
\be{barck}
\bar{c}_k(N) = \bE[c_k(\omega)(N,\eta)] = c_k + N^{-1}\,\lambda_{k+1},
\ee
and show that the same dichotomy as in \eqref{dich} holds. In Step 2, we explain why this 
replacement does not affect the dichotomy. The Green function of the two homogeneous 
hierarchical random walks will be denoted by $G((0,0),(\eta,\eta^\prime))$.

\paragraph{Step 1.}
In what follows, we use the explicit form of the transition kernel $p_t(\eta,\zeta)$, $\eta,\zeta
\in \Omega_N$, for the \emph{homogeneous} hierarchical random walk computed in Dawson, 
Gorostiza and Wakolbinger~\cite{DGW05} with the help of Fourier analysis. Namely,
\be{ak:dgw-asympt}
p_t(0,\eta) = \sum_{j \geq k} K_{jk}(N)\,\frac{\exp[-h_j(N) t]}{N^j},
\qquad t \geq 0,\, \eta \in \Omega_N\colon\, d_{\Omega_N}(0,\eta)=k \in \N_0,
\ee
where
\be{Kjkdef}
K_{jk}(N) = \left\{\begin{array}{ll}
0, &j=k=0,\\
-1, &j=k>0,\\
N-1, &\mbox{otherwise},
\end{array}
\right.
\qquad j,k \in \N_0,
\ee
and
\be{hjrjrel}
h_j(N) = \frac{N-1}{N}\,r_j(N) + \sum_{i>j} r_i(N), \qquad j \in \N,
\ee
with
\be{ak:106}
r_j(N) = \frac{1}{D(N)} \frac{N-1}{N} \sum_{i \geq j} \frac{\bar{c}_{i-1}(N)}{N^{2i-j-1}},
\qquad j\in\N, 
\ee
where $D(N)$ is the normalizing constant such that $\sum_{j\in\N} r_j(N)=1$. Note that the 
expressions in \eqref{hjrjrel}--\eqref{ak:106} simplify considerably in the limit as $N\to\infty$, 
namely, the term with $i=j$ dominates and
\be{Ninfsimp}
h_j(N) \sim r_j(N) \sim \frac{\bar{c}_{j-1}(N)}{D(N) N^{j-1}}, \quad j \in \N, 
\qquad D(N) \sim \bar{c}_0(N).
\ee
Also note that, because of \eqref{ak:recurrence-cond} and \eqref{ak:lambda-growth-condition}, 
the following holds:
\begin{equation}
\begin{tabular}{ll}
&\text{For $N\in\Ntwo$ the quantities $h_j(N),r_j(N),D(N)$ are bounded from}\\ 
&\text{above and below by positive finite constants times the right-hand side}\\
&\text{of \eqref{Ninfsimp} uniformly in the index $j$.}
\end{tabular} \label{fdh:Ncomp}
\end{equation}

To compute the sum in \eqref{eq:expected-hazard}, we need to distinguish two cases: 
(1) $\xi = 0^k \in \Omega_N^{(k)}$, the unique site in $\Omega_N^{\mathbb{T}}$ at height 
$k$ above $0 \in \Omega_N$; (2) $\xi \in \Omega_N^{(k)}\backslash\{0^k\}$.

\paragraph{(1)} \underline{$\xi = 0^k$}. Write
\begin{equation}
\label{eq:ak147}
\sum_{\eta,\eta^\prime \in B_{|\xi|}(\xi)} G((0,0),(\eta,\eta^\prime))
= \sum_{0 \leq p,q \leq k}  N[p]N[q]\, G((0,0),(\eta^{(p)},\eta^{(q)})),
\end{equation}
where $\eta^{(p)}$ is any site in $\Omega_N$ such that $d_{\Omega_N}(0,\eta^{(p)}) = p$, and 
\be{Npdef}
N[p] = |B_p(0)\backslash B_{p-1}(0)| = \left\{\begin{array}{ll}
N^p-N^{p-1}, &p>0,\\
1, &p=0.
\end{array}
\right.
\ee
With the help of \eqref{ak:dgw-asympt} we obtain
\be{eq:ak*}
G((0,0),(\eta^{(p)},\eta^{(q)}))
= \sum_{m \geq p} \sum_{n \geq q}  K_{mp}(N) K_{nq}(N)\, N^{-m-n} \frac{1}{h_m(N)+h_n(N)}.
\ee
Inserting \eqref{Kjkdef} and \eqref{Ninfsimp}, we get
\begin{equation}
\mbox{r.h.s. } \eqref{eq:ak*} \sim \frac{\bar{c}_0(N)}{(1+1_{\{p=q\}}) \label{eq:ak**}
\bar{c}_{p \wedge q}(N) N^{p \vee q}}, \qquad N\to \infty,
\end{equation}
where the asymptotics comes from the terms with $m=p+1$ and $n=q+1$.
Combining (\ref{eq:ak147}--\ref{eq:ak**}), we obtain
\be{eq:ak145}
\begin{aligned}
&N^{-k} \sum_{\eta,\eta^\prime \in B_k(0^k)} G((0,0),(\eta,\eta^\prime))\\
&\qquad \sim N^{-k} \left(\sum_{0 \leq p \leq k}  N^{2p} \frac{\bar{c}_0(N)}{2 \bar{c}_p(N) N^p}
+ 2 \sum_{0 \leq p < q \leq k} N^{p+q} \frac{\bar{c}_0(N)}{\bar{c}_p(N) N^{q}}\right)
\sim \frac{\bar{c}_0(N)}{2 \bar{c}_k(N)},
\end{aligned}
\ee
where the asymptotics comes from the term with $p=k$.

\paragraph{(2)} \underline{$\xi \in \Omega_N^{(k)}\backslash\{0^k\}$}. Now $p_t(0,\eta)$ is the 
same for all $\eta \in B_{|\xi|}(\xi)$, and so we have
\begin{equation}
\label{eq:ak101a}
\begin{aligned}
&N^{-k} \sum_{\xi \in \Omega_N^{(k)}\backslash\{0^k\}} \lambda^\xi(\omega)
\sum_{\eta,\eta^\prime \in B_{|\xi|}(\xi)} G((0,0),(\eta,\eta^\prime))\\
&\qquad = N^k \sum_{\xi \in \Omega_N^{(k)}\backslash\{0^k\}}
\lambda^\xi(\omega)\, G((0,0),(\eta^{(k)},\eta^{(k)}))\\
&\qquad \sim N^k \sum_{d\in\N} 
\sum_{ \substack{\xi \in \Omega_N^{(k)} \\ d_{\Omega^{(k)}_N}(0^k,\xi) = d} }
\lambda^\xi(\omega)\, \frac{\bar{c}_0(N)}{2 N^{k+d} \bar{c}_{k+d}(N)}\\
&\qquad = \frac{1}{2} \sum_{d\in\N} \frac{\bar{c}_0(N)}{\bar{c}_{k+d}(N)}
\Bigg(\frac{1}{N^{d}} 
\sum_{ \substack{\xi \in \Omega_N^{(k)} \\ d_{\Omega^{(k)}_N}(0^k,\xi) = d} } 
\lambda^\xi(\omega)\Bigg),
\end{aligned}
\end{equation}
where we use (\ref{eq:ak*}--\ref{eq:ak**}) with $p=q=k$, and $d_{\Omega^{(k)}_N}$ denotes 
the distance within $\Omega_N^{(k)}$.

\medskip
Combining \eqref{lambdaxi}, \eqref{eq:expected-hazard}, \eqref{eq:ak145}--\eqref{eq:ak101a}, 
we arrive at
\be{meanhazfinal}
\bar{E}[H_N(\omega;\infty)] \sim \tfrac12\,\bar{c}_0(N) 
\sum_{k\in\N_0} \frac{1}{\bar{c}_k(N)} \sum_{l=0}^k
\lambda_l \Big\{\tfrac12\,1_{\{l=k\}}\,\Theta(\omega;0,k) + 1_{\{l<k\}}\,\Theta(\omega;k-l,l)\Big\},
\ee
where we abbreviate
\be{Thetadef}
\Theta(\omega;a,b) = \frac{1}{N[a]} 
\sum_{\substack{\xi\in\Omega^{(b)}_N \\ d_{\Omega_N^{(b)}}(0^b,\xi)=a}} 
\rho^\xi(\omega), \qquad a,b \in \N_0.
\ee
Now, by \eqref{Aprop} we have, for some $C<\infty$,
\be{Thetaprop}
\bE\big[\Theta(\omega;a,b)\big] = 1, \quad \bE\big[\Theta(\omega;a,b)\Theta(\omega;a',b')\big] 
\leq C \quad \forall\,a,b,a',b'\in\N_0.
\ee
Because $\{\rho^\xi(\omega)\colon\,\xi \in \Omega_N^{\mathbb{T}}\}$ is stationary, ergodic and 
tail trivial (recall \eqref{fdh:tail}), it follows from a standard second-moment estimate that the 
sum in the right-hand of \eqref{meanhazfinal} is infinite if and only if its expectation w.r.t.\ $\bP$ 
is infinite. Since
\begin{equation}
\bE(\mathrm{r.h.s.}\eqref{meanhazfinal}) = \sum_{k\in\N_0} \frac{1}{\bar{c}_k(N)} \sum_{l=0}^k
\lambda_l \big\{\tfrac12\,1_{\{l=k\}} + 1_{\{l<k\}}\big\},\label{expmeanhazfinal}
\end{equation}
we get the claim in \eqref{dich} for the hierarchical random walk with homogeneous migration 
coefficients $\bar{c}_k(N)$ defined in \eqref{barck} (the factor $\tfrac12$ is harmless for the 
convergence or divergence of the right-hand side of \eqref{expmeanhazfinal}).

\paragraph{Step 2.}
It remains to show that the same dichotomy holds for the coefficients in \eqref{cNomega} rather 
than \eqref{barck}. We start with the observation that the hierarchical random walk in random 
environment is \emph{symmetric} and therefore is {\em reversible} with respect to the 
{\em Haar measure} on $\Omega_N$. We have the representation (see Bovier and den 
Hollander~\cite[Chapter 7]{BdH15})
\be{fdh:jointGreenpottheo}
\begin{aligned}
G^\omega((0,0),(\eta,\eta^\prime)) 
&= \int_0^{\infty} p_t^\omega(0,\eta) p_t^\omega(0,\eta^\prime)\, \dd t\\
&= \frac{P^\omega_{(0,0)}(\tau_{(\eta,\eta^\prime)} < \infty)}
{a^\omega((\eta,\eta^\prime))\,P^\omega_{(\eta,\eta\prime)}(\hat\tau_{(\eta,\eta\prime)} = \infty)}, 
\quad (\eta,\eta^\prime) \in \Omega_N \times \Omega_N,
\end{aligned}
\ee
where $a^\omega((a,b)) = \sum_{(c,d)} a^\omega((a,b),(c,d))$ is the total rate at which the random 
walk jumps out of $(a,b)$, and 
\be{tauabdef}
\begin{aligned}
\tau_{(a,b)} &= \inf\big\{t \geq 0\colon\,Y_t(\omega) = (a,b)\big\},\\
\hat\tau_{(a,b)} &= \inf\big\{t \geq 0\colon\,Y_t(\omega) = (a,b),\,
\exists\,0<s<t\colon\,Y_s(\omega) \neq (a,b) \big\},
\end{aligned}
\ee 
are the first hitting time, respectively, the first return time of $(a,b)$. The point of 
\eqref{fdh:jointGreenpottheo} is that both the numerator and the denominator can be controlled with 
the help of the \emph{Dirichlet Principle}, as follows.

Let
\be{Diridef}
\CE^\omega(f,f) = \sum_{(a,b),(c,d)} \frac{a^\omega((a,b),(c,d))}{a^\omega((a,b))}
\,[f((a,b))-f((c,d))]^2
\ee
be the \emph{Dirichlet form} associated with the two random walks in random environment. 
By classical potential theory, the escape probability in the denominator of \eqref{fdh:jointGreenpottheo} 
is given by the capacity of the pair $(\eta,\eta^\prime)$ and $\infty$,
\be{pot2}
P^\omega_{(\eta,\eta\prime)}(\hat\tau_{(\eta,\eta\prime)} = \infty)
= \capa^\omega((\eta,\eta\prime), \infty)
= \inf_{ {f\colon\,\Omega_N \to [0,1]} \atop {f((\eta,\eta^\prime)) = 1, f(\infty) = 0} } \CE^\omega(f,f), 
\ee
where $f(\infty)=0$ stands for $\lim_{(\eta,\eta^\prime) \to \infty} f((\eta,\eta^\prime)) = 0$ with 
$(\eta,\eta^\prime) \to \infty$ short hand for $d_{\Omega_N}(0,\eta)+d_{\Omega_N}(0,\eta^\prime) 
\to \infty$ (recall \eqref{ag31}). The hitting probability in the numerator of \eqref{fdh:jointGreenpottheo} 
can also be expressed in terms of capacities after we use a renewal argument. Write
\be{pot3}
\begin{aligned}
P^\omega_{(0,0)}(\tau_{(\eta,\eta^\prime)} < \infty) 
&= \frac{P^\omega_{(0,0)}(\tau_{(\eta,\eta^\prime)} < \hat\tau_{(0,0)})}
{1-P^\omega_{(0,0)}(\hat\tau_{(0,0)} < \tau_{(\eta,\eta^\prime)})}\\
&= \frac{P^\omega_{(0,0)}(\tau_{(\eta,\eta^\prime)} < \hat\tau_{(0,0)})}
{P^\omega_{(0,0)}(\tau_{(\eta,\eta^\prime)} < \hat\tau_{(0,0)}) 
+ P^\omega_{(0,0)}(\tau_{(\eta,\eta^\prime)} = \hat\tau_{(0,0)} = \infty)}.
\end{aligned}
\ee 
We have
\be{pot4}
P^\omega_{(0,0)}(\tau_{(\eta,\eta^\prime)} < \hat\tau_{(0,0)}) = \capa^\omega((0,0),(\eta,\eta^\prime))
= \inf_{ {f\colon\,\Omega_N \to [0,1]} \atop {f((\eta,\eta^\prime)) = 1, f((0,0)) = 0} } \CE^\omega(f,f).
\ee
Moreover,
\be{pot5}
P^\omega_{(0,0)}(\tau_{(\eta,\eta^\prime)} = \hat\tau_{(0,0)} = \infty)
= P^\omega_{(0,0)}(\hat\tau_{(0,0)} = \infty) 
- P^\omega_{(0,0)}(\tau_{(\eta,\eta^\prime)} <\infty,  \hat\tau_{(0,0)} = \infty).
\ee
The first term equals $\capa^\omega((0,0), \infty)$, while the second term is bounded from above by 
$P^\omega_{(0,0)}(\tau_{(\eta,\eta^\prime)} <\infty)$, which tends to zero as $(\eta,\eta^\prime) \to 
\infty$ when $G^\omega<\infty$, i.e., when the random walk in random environment is transient.
Below we will show that, under assumption~\eqref{Apropalt}, {\em $G^\omega<\infty$ if and only if $G<\infty$}.

We are now ready to explain why the estimates in Step 1 carry over. 
The transition rates of the random walk in random environment are given by
\be{pot6}
a^\omega((a,b)(c,d)) = \left\{\begin{array}{ll}
a^{\omega,(N)}(a,c), &b=d,\\ 
a^{\omega,(N)}(b,d), &a=c,\\
0, &\text{else.}
\end{array}
\right.
\ee    
where $a^{\omega,(N)}$ is the transition kernel in \eqref{32b}, but with $c_k$ replaced by 
$c_k(\omega)(N,\eta)$ in \eqref{cNomega}: 
\be{pot7}
a^{\omega,(N)}(\eta,\zeta) = \sum_{k \geq d_{\Omega_N}(\eta,\zeta)} 
\frac{c_{k-1} + N^{-1} \lambda^{\mathrm{MC}_k(\eta)}(\omega)}{N^{2k-1}},
\qquad \eta,\zeta \in \Omega_N,\,\eta\neq\zeta, \qquad a^{\omega,(N)}(\eta,\eta) = 0.
\ee 
By \eqref{lambdaxi}, we have $\lambda^{\mathrm{MC}_k(\eta)}(\omega) 
= \lambda_k \rho^{\mathrm{MC}_k(\eta)}(\omega)$.
Assumption \eqref{Apropalt} implies $\delta \leq \lambda^{\mathrm{MC}_k(\eta)}(\omega)/\lambda_k$ 
$\leq\delta^{-1}$ for all $k\in\N_0$, $\eta\in\Omega_N$ and $\bP$-a.e.\ $\omega$, which in turn 
implies
\be{pot8}
\delta \leq \frac{a^\omega((a,b)(c,d))}{a((a,b)(c,d))} \leq \delta^{-1}
\quad \forall\,a,b,c,d \in \Omega_N \text{ for } \bP\text{-a.e. } \omega,
\ee  
where $a$ is the transition kernel in \eqref{32b}, but with $c_k$ replaced by $\bar{c}_k(N)$ in 
\eqref{barck}. Inserting these bounds into the formulas for the capacities in \eqref{pot2} and 
\eqref{pot4}, and recalling \eqref{fdh:jointGreenpottheo}, we see that
\be{pot9}
\exists\,\delta'>0\colon\quad \delta' \leq 
\frac{G^\omega((0,0),(\eta,\eta^\prime))}{G((0,0),(\eta,\eta^\prime))} \leq \delta'^{-1} 
\quad \forall\,\eta,\eta^\prime \in \Omega_N \text{ for } \bP\text{-a.e. } \omega.
\ee
This shows that the Green function for the random walk in random environment is comparable 
to the Green function of the homogeneous random walk. Hence the argument in Step 1 carries over. 

Note that $P^\omega_{(0,0)}(\hat\tau_{(0,0)} = \infty)$ in \eqref{pot5} is comparable to 
$P_{(0,0)}(\hat\tau_{(0,0)} = \infty)$, which is a strictly positive constant when $G<\infty$. 
Consequently, by the observation made below \eqref{pot5}, also $P^\omega_{(0,0)}(\tau_{(\eta,\eta^\prime)} 
= \hat\tau_{(0,0)} = \infty)$ in \eqref{pot5} is comparable to $P_{(0,0)}(\tau_{(\eta,\eta^\prime)} 
= \hat\tau_{(0,0)} = \infty)$ when $G,G^\omega<\infty$.

It remains to show that, under assumption~\eqref{Apropalt}, $G^\omega<\infty$ if and only if 
$G<\infty$. This is easy. Indeed, if $G<\infty$, then $P_{(0,0)}(\hat\tau_{(0,0)} = \infty)>0$, 
hence $P^\omega_{(0,0)}(\hat\tau_{(0,0)} = \infty)>0$, and hence $G^\omega<\infty$ by 
\eqref{fdh:jointGreenpottheo} because $P^\omega_{(0,0)}(\tau_{(\eta,\eta^\prime)} < \infty) \leq 1$.  
Conversely, if $G=\infty$, then $P_{(0,0)}(\hat\tau_{(0,0)} = \infty)=0$, hence $P^\omega_{(0,0)}
(\hat\tau_{(0,0)} = \infty)=0$, and hence $G^\omega=\infty$ by \eqref{fdh:jointGreenpottheo} 
because $P^\omega_{(0,0)}(\tau_{(\eta,\eta^\prime)} < \infty)>0$.
\hfill $\square$
\epr

\subsection{Zero-one law}
\label{ss.zero-one}

To conclude the proof of the dichotomy in Theorem~\ref{T.coexcritNfin}, we use the following 
zero-one law.
 
\begin{lemma}[{\bf Zero-one law}]
\label{zero-one}
For every $N \in \Ntwo$ and $\bP$-a.e.\ $\omega$, $H_N(\omega;\infty) = \infty$
if and only if $\bar{E}^\omega[H_N(\omega;\infty)] = \infty$. \hfill $\square$
\el

\bpr\label{pr.2231}
The proof comes in five steps.
\paragraph{Step 1.} 
For $M,N \in \N$, let $H_N^{(M)}(\omega;\infty)$ denote the truncation of $H_N(\omega;\infty)$ 
obtained by setting $\lambda_k=0$ for $k>M$ (no resampling in blocks of hierarchical size larger 
than $M$). The key to the proof is the following second-moment estimate:
\be{HNM2nd}
\quad \exists\,\,C<\infty\colon\qquad \bar{E}^\omega\big[\big(H_N^{(M)}(\omega;\infty)\big)^2\big] 
\leq C \big(\bar{E}^\omega\big[H_N^{(M)}(\omega;\infty)\big]\big)^2 
\qquad \forall\,M,N\in\N. 
\ee

Before proving \eqref{HNM2nd}, we complete the proof of Theorem~\ref{T.coexcritNfin}. 
By Cauchy-Schwarz, for any non-negative random variable $V$ we have
\be{VCS}
\bar{P}^\omega(V>0) \geq (\bar{E}^\omega[V])^2/\bar{E}^\omega[V^2].
\ee
Picking $V=H_N^{(M)}(\omega;\infty)/\bar{E}^\omega[H_N^{(M)}(\omega;\infty)]$ in \eqref{VCS} 
and using \eqref{HNM2nd}, we obtain
\be{HNM1}
\bar{P}^\omega\Big(H_N^{(M)}(\omega;\infty)/\bar{E}^\omega\big[H_N^{(M)}(\omega;\infty)\big] > 0\Big) 
\geq \frac{1}{C} \qquad \forall\,M,N \in \N.
\ee 
Since $H_N^{(M)}(\omega;\infty) \leq H_N(\omega;\infty)$ and the lower bound in \eqref{HNM1} is 
uniform in $M$ and $N$, it follows that if $\bar{E}^\omega[H_N(\omega;\infty)] = \lim_{M\to\infty} 
\bar{E}^\omega[H_N^{(M)}(\omega;\infty)] = \infty$, then $\bar{P}^\omega(H_N(\omega;\infty)=\infty) 
\geq 1/C$. By \eqref{ak12}, $\{\omega\colon\,H_N(\omega;\infty)=\infty\}$ is an element of the 
sigma-algebra at infinity defined in \eqref{fdh:tail}, which is trivial. The latter event therefore has 
probability either 0 or 1, and since it has positive probability we get the claim.  

\paragraph{Step 2.}
Write out (recall \eqref{ak12})
\be{2ndmomentwrite1}
\begin{aligned}
\bar{E}^\omega\big[\big(H_N^{(M)}(\omega;\infty)\big)^2\big] 
&= \sum_{k,l=0}^M N^{-k-l} \sum_{ {\eta,\eta'\in\Omega_N} \atop {d_{\Omega_N}(\eta,\eta') \leq k} }
\sum_{ {\zeta,\zeta'\in\Omega_N} \atop {d_{\Omega_N}(\zeta,\zeta') \leq l} }
\lambda^{\mathrm{MC}_k(\eta)}(\omega) \lambda^{\mathrm{MC}_l(\zeta)}(\omega)\\
&\qquad \times \bar{E}^\omega\left[\int_0^\infty 
\dd s\,1_{\{Y_s(\omega)=\eta,Y'_s(\omega)=\eta'\}} \int_0^\infty 
\dd u\,1_{\{Y_u(\omega)=\zeta,Y'_u(\omega)=\zeta'\}} \right]\\
& = \sum_{k,l=0}^M N^{-k-l} \sum_{ {\eta,\eta'\in\Omega_N} \atop {d_{\Omega_N}(\eta,\eta') \leq k} }
\sum_{ {\zeta,\zeta'\in\Omega_N} \atop {d_{\Omega_N}(\zeta,\zeta') \leq l} }
\lambda^{\mathrm{MC}_k(\eta)}(\omega) \lambda^{\mathrm{MC}_l(\zeta)}(\omega)\\
&\qquad \times 2\, G^\omega((0,0),(\eta,\eta'))\,G^\omega((\eta,\eta'),(\zeta,\zeta'))\\
&= 2 \sum_{k,l=0}^M N^{-k-l} \sum_{\xi \in \Omega^{(k)}_N} \lambda^{\xi}(\omega)
\sum_{\xi' \in \Omega^{(l)}_N} \lambda^{\xi'}(\omega)\\
&\qquad \times \sum_{\eta,\eta' \in B_{|\xi|}(\xi)} G^\omega((0,0),(\eta,\eta'))
\sum_{\zeta,\zeta' \in B_{|\xi'|}(\xi')} G^\omega((\eta,\eta'),(\zeta,\zeta')).
\end{aligned}
\ee
In what follows, we consider the hierarchical random walk with homogeneous migration coefficients 
$\bar{c}_k$ defined in \eqref{barck}. In Step 4 we incorporate the $\omega$-dependence. 

\medskip\noindent
Use symmetry to replace $\sum_{k,l=0}^M$ by $2 \sum_{k,l=0}^M 1_{\{k<l\}} + \sum_{k,l=0}^M 
1_{\{k=l\}}$. Due to the ultrametricity of the hierarchical distance and the isotropy of the hierarchical 
random walk, we have $G((\eta,\eta'),(\zeta,\zeta'))=G((0,0),(\zeta,\zeta'))$ for all $\eta,\eta' \in 
B_{|\xi|}(\xi)$ in the following three cases (where $\xi<\xi'$ means that $\xi'$ is an ancestor of $\xi$):

\medskip
\begin{tabular}{ll}
&(1) $k<l$, $\xi \nless \xi'$ and $\zeta,\zeta' \in B_{|\xi'|}(\xi')$.\\
&(2) $k<l$ and $\xi < \xi'$ and $\zeta,\zeta' \in B_{|\xi'|}(\xi') \backslash B_{|\xi|}(\xi)$.\\
&(3) $k=l$, $\xi \neq \xi'$ and $\zeta,\zeta' \in B_{|\xi'|}(\xi')$. 
\end{tabular}

\medskip\noindent
Therefore we have
\be{2ndmomentwrite2}
\bar{E}\big[\big(H_N^{(M)}(\omega;\infty)\big)^2\big] 
= 2\bar{E}\big[H_N^{(M)}(\omega;\infty)\big]^2 + R
\ee
with $R$ a correction term given by
\be{Rdef}
\begin{aligned}
R 
&=  4 \sum_{0 \leq k < l \leq M} 
N^{-k-l} \sum_{\xi \in \Omega^{(k)}_N} \lambda^{\xi}(\omega)
\sum_{\xi' \in \Omega^{(l)}_N} \lambda^{\xi'}(\omega) \,1_{\{\xi<\xi'\}}
\sum_{\eta,\eta' \in B_{|\xi|}(\xi)} G((0,0),(\eta,\eta'))\\
&\qquad \qquad \times 
\sum_{ {\zeta,\zeta' \in B_{|\xi'|}(\xi')} \atop {\zeta \in B_{|\xi|}(\xi) 
\text{ and/or } \zeta' \in B_{|\xi|}(\xi)} }
\big[G((\eta,\eta'),(\zeta,\zeta'))-G((0,0),(\zeta,\zeta'))\big]\\
&\quad + 2 \sum_{0 \leq k \leq M} N^{-2k} 
\sum_{\xi \in \Omega^{(k)}_N} [\lambda^{\xi}(\omega)]^2 
\sum_{\eta,\eta' \in B_{|\xi|}(\xi)} G((0,0),(\eta,\eta'))\\
&\qquad\qquad\times \sum_{\zeta,\zeta' \in B_{|\xi|}(\xi)} 
\big[G((\eta,\eta'),(\zeta,\zeta'))-G((0,0),(\zeta,\zeta'))\big].
\end{aligned}
\ee
If $R$ would be absent from \eqref{2ndmomentwrite2}, then we would have proved \eqref{HNM2nd} 
with $C=2$. Thus, it remains to show that $R$ can only raise the constant. We will do this by showing 
that $R \leq O(N^{-2})\,\bar{E}[H_N^{(M)} (\omega;\infty)]^2$ as $N\to\infty$, uniformly in $M$, and 
by appealing to the observation made in \eqref{fdh:Ncomp}.

\paragraph{Step 3.}
By translation invariance, $G((\eta,\eta'),(\zeta,\zeta')) = G((0,0),(\zeta-\eta,\zeta'-\eta'))$. By isotropy, 
$\sum_{\zeta,\zeta' \in B_{|\xi|}(\xi)} G((0,0),(\zeta-\eta,\zeta'-\eta'))= \sum_{\zeta,\zeta' \in B_{|\xi|}(\xi)} 
G((0,0),(\zeta,\zeta'))$ for all $\eta,\eta' \in B_{|\xi|}(\xi)$. Hence, in the first sum in \eqref{Rdef} the term 
with $\zeta,\zeta' \in B_{|\xi|}(\xi)$ vanishes, while the second sum in \eqref{Rdef} vanishes altogether, 
and so $R$ simplifies to
\be{Rsym1}
\begin{aligned}
R &= 8 \sum_{0 \leq k < l \leq M} 
N^{-k-l} \sum_{\xi \in \Omega^{(k)}_N} \lambda^{\xi}(\omega)
\sum_{\xi' \in \Omega^{(l)}_N} \lambda^{\xi'}(\omega) \,1_{\{\xi<\xi'\}}
\sum_{\eta,\eta' \in B_{|\xi|}(\xi)} G((0,0),(\eta,\eta'))\\
&\qquad \qquad \times 
\sum_{ {\zeta \in B_{|\xi|}(\xi)} \atop {\zeta' \in B_{|\xi'|}(\xi') \backslash B_{|\xi|}(\xi)} }
\big[G((0,0),(\zeta-\eta,\zeta'-\eta'))-G((0,0),(\zeta,\zeta'))\big].
\end{aligned}
\ee
By isotropy, $\sum_{\zeta \in B_{|\xi|}(\xi)} G((0,0),(\zeta-\eta,\zeta'-\eta')) = \sum_{\zeta \in B_{|\xi|}(\xi)} 
G((0,0),(\zeta,\zeta'-\zeta))$ for all $\eta,\eta' \in B_{|\xi|}(\xi)$ when $\zeta' \in B_{|\xi'|}(\xi') \backslash 
B_{|\xi|}(\xi)$, and so $R$ simplifies further to
\be{Rsym2}
\begin{aligned}
R &= 8 \sum_{0 \leq k < l \leq M} 
N^{-k-l} \sum_{\xi \in \Omega^{(k)}_N} \lambda^{\xi}(\omega)
\sum_{\xi' \in \Omega^{(l)}_N} \lambda^{\xi'}(\omega) \,1_{\{\xi<\xi'\}}
\sum_{\eta,\eta' \in B_{|\xi|}(\xi)} G((0,0),(\eta,\eta'))\\
&\qquad \qquad \times 
\sum_{\zeta \in B_{|\xi|}(\xi)}
\left[ \sum_{\zeta'' \in B_l(0) \backslash B_k(0)} G((0,0),(\zeta,\zeta''))
-\sum_{\zeta' \in B_{|\xi'|}(\xi') \backslash B_{|\xi|}(\xi)} 
G((0,0),(\zeta,\zeta'))\right].
\end{aligned}
\ee
If $0 \in B_{|\xi'|}(\xi')$, then $B_l(0) = B_{|\xi'|}(\xi')$, in which case the term between brackets equals
\be{bracket}
\sum_{\zeta' \in B_{|\xi|}(\xi)} G((0,0),(\zeta,\zeta'))
- \sum_{\zeta'' \in B_k(0)} G((0,0),(\zeta,\zeta'')).
\ee
If also $0 \in B_{|\xi|}(\xi) \subset B_{|\xi'|}(\xi')$, then also $B_k(0) = B_{|\xi|}(\xi)$, in which case the 
latter difference vanishes. Hence we obtain the bound
\be{Rsym3}
\begin{aligned}
R &\leq 8 \sum_{0 \leq k < l \leq M} 
N^{-k-l} \sum_{\xi \in \Omega^{(k)}_N} \lambda^{\xi}(\omega)
\sum_{\xi' \in \Omega^{(l)}_N} \lambda^{\xi'}(\omega) \,1_{\{\xi<\xi'\}} 
\sum_{\eta,\eta' \in B_{|\xi|}(\xi)} G((0,0),(\eta,\eta'))\\
&\times 
\left[1_{\{0 \nleq \xi, \,0 \leq \xi'\}} 
\sum_{\zeta,\bar\zeta \in B_{|\xi|}(\xi)} G((0,0),(\zeta,\bar\zeta))
+ 1_{\{0 \nleq \xi'\}} 
\sum_{ {\zeta \in B_{|\xi|}(\xi)} \atop {\bar\zeta \in B_l(0)\backslash B_k(0)} } 
G((0,0),(\zeta,\bar\zeta))
\right].
\end{aligned}
\ee 
The sums over $\eta,\eta'$ and $\zeta,\bar\zeta$ can be computed with the help of \eqref{eq:ak*}. 
Recalling \eqref{eq:ak**}--\eqref{eq:ak101a}, we obtain
\be{rest2}
\begin{aligned}
&\sum_{\eta,\eta' \in B_{|\xi|}(\xi)} G((0,0),(\eta,\eta'))
= \sum_{\zeta,\bar\zeta \in B_{|\xi|}(\xi)} G((0,0),(\eta,\eta'))\\
&\qquad \sim N^{2k}\,\frac{\bar{c}_0(N)}{2N^{k+d(\xi)}\bar{c}_{k+d(\xi)}(N)}
= N^k \frac{\bar{c}_0(N)}{2N^{d(\xi)}\bar{c}_{k+d(\xi)}(N)}
\end{aligned}
\ee
with $d(\xi) = d_{\Omega_N^{(k)}}(0^k,\xi)$ and
\be{rest3}
\begin{aligned}
&\sum_{ {\zeta \in B_{|\xi|}(\xi)} \atop {\bar\zeta \in B_l(0) \backslash B_k(0)} } 
G((0,0),(\zeta,\bar\zeta))
\sim \sum_{ {\zeta \in B_{|\xi|}(\xi)} \atop {\bar\zeta \in B_l(0) \backslash B_k(0)} } 
\frac{\bar{c}_0(N)}{N^{l+d'(\xi')}\bar{c}_{d''(\bar\zeta)}(N)}\\
&\qquad = N^{-l+k} \sum_{\bar\zeta \in B_l(0) \backslash B_k(0)} 
\frac{\bar{c}_0(N)}{N^{d'(\xi')}\bar{c}_{d''(\bar\zeta)}(N)}
\sim N^k \frac{\bar{c}_0(N)}{N^{d'(\xi')}\bar{c}_l(N)}
\end{aligned}
\ee
with $d'(\xi')= d_{\Omega_N^{(l)}}(0^l,\xi')$ and $d''(\bar\zeta)=d_{\Omega_N}(0,\bar\zeta)$. 
Here we use that $\xi \neq 0^k$ when $0 \nleq \xi$ and $\xi' \neq 0^l$ when $0 \nleq \xi'$, and 
also that $l+d'(\xi')>d''(\bar\zeta)$ for all $\bar\zeta \in B_l(0)$. Inserting \eqref{rest2}--\eqref{rest3} 
into \eqref{Rsym3}, we get
\be{rest4}
\begin{aligned}
R &\leq 8\,[1+o(1)] \sum_{0 \leq k < l \leq M} N^{k-l} 
\sum_{\xi \in \Omega^{(k)}_N} \lambda^{\xi}(\omega)
\sum_{\xi' \in \Omega^{(l)}_N} \lambda^{\xi'}(\omega) \,1_{\{\xi<\xi',\,0 \nleq \xi'\}}\\
&\times
\left[ 
1_{\{0 \nleq \xi, \,0 \leq \xi'\}} 
\left(\frac{\bar{c}_0(N)}{2N^{d(\xi)}\bar{c}_{k+d(\xi)}(N)} \right)^2
+ 1_{\{0 \nleq \xi'\}}\,
\frac{\bar{c}_0(N)}{2N^{d(\xi)}\bar{c}_{k+d(\xi)}(N)}
\,\frac{\bar{c}_0(N)}{N^{d'(\xi')}\bar{c}_l(N)}
\right].
\end{aligned}
\ee

\paragraph{Step 4.}
If $0 \nleq \xi$, then $d(\xi) \in \N$. Hence the first part of \eqref{rest4} equals $8\,[1+o(1)]$ times
\be{rest5}
\sum_{0 \leq k < l \leq M} N^{k-l} \lambda_k \lambda_l \sum_{d=1}^l 
\left(\frac{\bar{c}_0(N)}{2N^d\bar{c}_{k+d}(N)}\right)^2
\sum_{ {\xi \in \Omega^{(k)}_N} \atop {d(\xi)=d} } \rho^{\xi}(\omega)\rho^{\xi^{l-k}}(\omega),
\ee 
where we recall \eqref{lambdaxi} and write $\xi^{l-k}$ to denote the ancestor of $\xi$ at height $l$. 
Because $\{\rho^\xi(\omega)\colon\,\xi \in \Omega_N^{\mathbb{T}}\}$ is stationary, ergodic and tail 
trivial (recall \eqref{fdh:tail}), the last sum scales as $\sim N^d \E[\rho^{0^k}(\omega)\rho^{0^l}(\omega)]$, 
where the expectation is finite because of \eqref{Aprop}. Hence \eqref{rest5} is
\be{rest6}
\leq C[1+o(1)]\,\tfrac14\bar{c}_0(N)^2
\sum_{0 \leq k < l \leq M} N^{k-l} \lambda_k \lambda_l \sum_{d=1}^l 
\frac{1}{N^d\bar{c}_{k+d}(N)^2}. 
\ee  
The last sum scales as $\sim 1/N\bar{c}_{k+1}(N)^2$, and so \eqref{rest5} is
\be{rest7}
\leq C[1+o(1)]\,\tfrac14\bar{c}_0(N) 
\sum_{k=0}^M \frac{\lambda_k}{\bar{c}_{k+1}(N)^2}
\sum_{l=k+1}^M \lambda_l N^{k-l-1} 
= O(N^{-2})\,\bar{E}[H_N^{(M)}(\omega;\infty)]^2,
\ee 
where the equality follows from \eqref{meanhazfinal} with $k,l$ truncated at $M$. 

\medskip\noindent
If $0 \nleq \xi'$, then $d'(\xi') \in \N$ and $d(\xi)=l-k+d'(\xi')$. Hence the second part of 
\eqref{rest4} equals $8\,[1+o(1)]$ times
\be{rest8}
\sum_{0 \leq k < l \leq M} N^{k-l} \lambda_k \lambda_l \sum_{d'\in\N}
\frac{\bar{c}_0(N)}{2N^{l-k+d'}\bar{c}_{l+d'}(N)}\,\frac{\bar{c}_0(N)}{N^{d'}\bar{c}_l(N)}
\sum_{ {\xi \in \Omega^{(k)}_N} \atop {d(\xi)=l-k+d'} } \rho^{\xi}(\omega)\rho^{\xi^{l-k}}(\omega).
\ee 
The last sum is $\leq C[1+o(1)] N^{l-k+d'}$. Hence \eqref{rest8} is 
\be{rest9}
\leq C[1+o(1)]\,\tfrac12\bar{c}_0(N)^2
\sum_{0 \leq k < l \leq M} N^{k-l} \lambda_k \lambda_l \sum_{d'\in\N}
\frac{1}{N^{d'}\bar{c}_{l+d'}(N)\bar{c}_l(N)}.  
\ee
The last sum scales as $\sim 1/N\bar{c}_{l+1}(N)\bar{c}_l(N)$, and so \eqref{rest8} is
\be{rest10}
\leq C[1+o(1)]\,\tfrac12\bar{c}_0(N)^2
\sum_{k=0}^M \lambda_k \sum_{l=k+1}^M 
\frac{\lambda_l}{\bar{c}_{l+1}(N)\bar{c}_l(N)}\,N^{k-l-1} 
= O(N^{-2})\,\bar{E}[H_N^{(M)}(\omega;\infty)]^2.  
\ee

\paragraph{Step 5.}
We can again use \eqref{pot9} to show that the proof carries over to the random walk in random 
environment.
\epr

\medskip\noindent
Lemmas~\ref{meanhazard}--\ref{zero-one} combine to yield Theorem~\ref{T.coexcritNfin} 
(recall the discussion at the beginning of this section).

\section{Multi-scale analysis}
\label{s.mkvrand}

In this section we prove Theorem~\ref{mainth}. We first consider a {\em mean-field system}, i.e., 
the geographic space is $G=\{1,\ldots, N\}$ with $N \to \infty$. In Section~\ref{ss.mkvlim} we 
look at this system on time scale $t$ (on which the single components evolve) and on time 
scale $Nt$ (on which the block average evolves). In Section~\ref{ss.hierarmfl} we use the results 
to analyze the system on $\Omega_N$ as $N \to\infty$. Our general strategy runs parallel to that 
in \cite{GHKK14} for the homogeneous model. We only point out which {\em new issues} arise. 
Thus, this section is \emph{not autonomous}, 
the principal steps of the arguments are given but not all formulas are repeated, and for an 
understanding of the fine details the reader must check the relevant passages in \cite{GHKK14}.
 
\subsection{The mean-field finite-system scheme}
\label{ss.mkvlim} 

As geographic space and transition kernel we take
\be{ag45}
\Omega = \{1,\dots,N\}, \qquad a(i,j) = \frac{1}{N}, \quad i,j \in \Omega.
\ee
As migration rate we take $c_0$, and as resampling measures
\be{ag45alt}
\Lambda^i = \lambda_0\chi^i,\quad i\in \Omega, 
\ee
with total masses $\rho^i = \chi^i((0,1])$. We assume that $(\chi^i)_{i \in \N}$ is stationary and 
ergodic such that $\varrho^i$ has mean $1$. We also allow a component with Fleming-Viot 
resampling at rate $ d_0 $. The corresponding stochastic system is denoted by $(Z^{(N)}(t))_{t \geq 0}$ 
with $Z^{(N)}(t) = (Z_1^{(N)}(t),\ldots,Z^{(N)}_N(t))$. 

We consider time scales $t$ and $Nt$ for the components, respectively, the block average:
\be{ag46}
\begin{aligned}
&(Z^{(N)}(t))_{t \geq 0},\\
&(\bar Z^{(N)}(t))_{t \geq 0} \text{ with } 
\bar Z^{(N)}(t) = \frac{1}{N} \suml^N_{i=1} Z^{(N)}_i (Nt).
\end{aligned}
\ee

\begin{theorem}[{\bf [Mean-field finite-system scheme}]
\label{T.McKVl}
Suppose that the initial state is i.i.d.\ with mean measure $\theta \in \CP(E)$. Then
\be{ag47}
\lim_{N\to\infty} \CL \left[(Z^{(N)}(t))_{t \geq 0}\right] 
= \bigotimes_{i \in \N} \CL \left[(Z^{c_0, d_0, \Lambda^i}_\theta (t))_{t \geq 0}\right]
\ee
and 
\be{ag48}
\lim_{N\to\infty} \CL \left[(\bar Z^{(N)}(t))_{t \geq 0}\right] 
= \CL \left[(Z^{0,d_1,0}_\theta (t))_{t \geq 0}\right],
\ee
where $(Z_\theta^{c,d,\Lambda}(t))_{t\geq 0}$ is the McKean-Vlasov process defined in 
Section~{\rm \ref{sss.mfl}}. 
\hfill $\square$
\et

\begin{proof}\label{pr.2519}
We follow \cite[Section~6]{GHKK14}. The proof of \eqref{ag47} carries over in a straightforward 
way. In the proof of \eqref{ag48} a new issue arises: the increasing process of the limit process 
incorporates an additional averaging over the random environment controlling the resampling for 
the single components. This is handled as follows. 

Calculate the generator for a polynomial of $\bar Z^{(N)}(t)$, namely, a function $F$ of the form
\be{ag49}
F(z) = \langle f,z^{\otimes n}\rangle, \quad f \in C_b(E^n,\R),
\ee
applied to a $z \in E$ of the form $z = \frac{1}{N} \sum_{i=1}^N z_i$. This expression can be 
expanded in terms of sums of products of monomials of single components. The action of the 
generator was calculated and analysed in \cite[Section~6]{GHKK14}. We can argue in the 
same way with the following changes. In the action of the generator, integrals are taken with 
respect to the random sequence of resampling measures $(\Lambda^i)_{i \in \Omega}$ rather 
than a fixed resampling measure $\Lambda$. This entails that for the block average we get a 
sum of terms where the random sequence $(\rho^i)_{i\in\Omega}$ appears as weights. This in 
turn requires us to change the definition of the set of configurations on which the generator 
converges in the limit as $ N\to\infty$ (see \cite[Eq.~(6.41)--(6.42)]{GHKK14}) as follows.

Let $\B^\ast$ be the set of $\ux = (x_i)_{i\in\Omega}\in \CP(E)^\N$ with
\begin{align}
\label{ag60}
\lim_{N\to\infty} \CL\left[ \frac{1}{N} \suml^N_{i=1} \delta_{(\chi^i(\omega),x_i)} \right]
= \Gamma \in \CP\big(\CM_f([0,1]) \times \CP(E)\big), 
\end{align}
where
\begin{align}
\label{ag61}
\Gamma(\,\cdot\,,\CP(E)) = \CL[\chi^1],\quad \Gamma (\dd x_1 \mid \chi^1) 
= \nu^{c_0,0,\chi^1}(\dd x_1), \quad x_1 \in \CP(E).
\end{align} 
In order to calculate the sum of the resampling operators as in \cite[Eq.~(6.46)]{GHKK14}, we have 
to account for the presence of $\chi^i$, $i \in \Omega$, and invoke the law of large numbers for the 
expression in the variance formula, namely, $2c_0/(2c_0+\lambda_0\rho^i +2d_0)$, $i\in\Omega$. 
We write the latter as 
\be{ag61alt} 
\frac{c_0}{c_0+\mu_0\rho^i+d_0}, \qquad i \in \Omega.
\ee 
The expressions appearing in the generator, which are averages of local functions of the configuration 
and their shifts to any of the $N$ locations, result in the same expression as the one we obtain by 
using \eqref{ag61alt} averaged over $i\in\Omega$. In the limit as $N\to\infty$ this leads to the recursion 
formula in \eqref{diffusion-constants} for $k=0$. With these changes, the argument runs as in the case 
of the homogeneous environment.
\end{proof}

\subsection{The hierarchical mean-field limit}
\label{ss.hierarmfl} 

In this section we prove the results claimed in Section~\ref{sss.blav}. The strategy of the proof is to 
approximate our system with \emph{infinitely} many hierarchies of components and time scales by 
systems with \emph{finitely} many hierarchies of components and time scales, uniformly in $N$. The 
latter are analyzed by using the multiscale analysis of the mean-field system. In Section~\ref{sss.2l} 
we consider 2-level systems with $N^2$ components, in Section~\ref{sss.kl} $k$-level systems with 
$N^k$ components, and in Section~\ref{sss.3l} we pass to the limit $k\to\infty$ of infinitely many 
hierarchies. Along the way we make frequent reference to Dawson, Greven and Vaillancourt~\cite{DGV95} 
and the work on the homogeneous version of the model in \cite{GHKK14}. 

\subsubsection{The $2$-level system on 3 time scales}
\label{sss.2l}

The geographic space is $G_{N,2}=\{0,1,\ldots,N-1\}^2 = G_{N,1}^2$. We pick $d_0>0$,
$c_0,c_1,\mu_0,\mu_1>0$ and $c_k,\mu_k=0$ for $k \geq 2$.  We choose the random environment 
that is obtained by restricting the random environment of Section~\ref{s.model} to the subtree 
corresponding to the 2-block around 0. We show that, on time scales $t$ and $Nt$, we obtain the 
same limiting objects as described in Section~\ref{ss.mkvlim}, but with additional volatility and 
block resampling. 

For the 1-block averages we use the notation
\be{add21} 
Y^{(N)}_\eta(t) = N^{-1} \sum_{\sigma \in G_{N,1}} X_{(\sigma,\eta)}^{(N)}(t),
\qquad \eta \in G_{N,1},
\ee
and for the 2-block average (= total average) 
\be{add20}
Z^{(N)}(t) = N^{-2} \sum_{(\sigma,\eta) \in G_{N,2}} X_{(\sigma,\eta)}^{(N)}(t)
= N^{-1} \sum_{\eta \in G_{N,1}} Y^{(N)}_\eta(t).
\ee

\begin{proposition}[{\bf [Two-level rescaling}]
\label{P.2lresc}
Under the above assumptions,
\be{ak15new}
\begin{aligned}
\lim_{N\to\infty} \CL\left[\left(X^{(N)}_{(\sigma,\eta)}(t)\right)_{t \geq 0}\right] 
&= \CL\left[\left(Z^{c_0, d_0, \Lambda^{\mathrm{MC}_1
(\cdot,\eta)}(\omega)}_\theta (t)\right)_{t \geq 0}\right]
\qquad \forall\, (\sigma,\eta) \in G_{N,2},\\
\lim_{N\to\infty} \CL\left[\left(Y^{(N)}_\eta (Nt)\right)_{t \geq 0}\right] 
&= \CL\left[\left(Z^{c_1, d_1, \Lambda^{\mathrm{MC}_2
(\cdot,\cdot)}(\omega)}_\theta (t)\right)_{t \geq 0}\right]
\qquad \forall\, \eta \in G_{N,1},\\ 
\lim_{N\to\infty} \CL\left[\left(Z^{(N)} (N^2 t)\right)_{t \geq 0}\right]
&=\CL\left[\left(Z^{0,d_2,0}_\theta (t)\right)_{t \geq 0}\right],
\end{aligned}
\ee
with
\be{ak16new}
d_1 = \E_{\CL_\rho} \left[\frac{c_0 (\mu_0 \rho(\omega) + d_0)}
{c_0 + (\mu_0 \rho(\omega) + d_0)}\right],
\qquad 
d_2 = \E_{\CL_\rho} \left[\frac{c_1 (\mu_1 \rho(\omega) + d_1)}
{c_1 + (\mu_1 \rho(\omega) + d_1)}\right].
\ee 
\hfill $\square$
\end{proposition}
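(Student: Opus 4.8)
The proof runs by two nested applications of the mean-field finite-system scheme of Theorem~\ref{T.McKVl}, together with the separation of time scales for the hierarchical migration: as $N\to\infty$ the rates governing moves to successive levels separate, so the scales $t$, $Nt$ and $N^2t$ can be analysed one at a time. First I would fix a $1$-block and study its $N$ colonies on time scale $t$. On this scale the level-$2$ migration (per-individual rate $c_1/N$) and the level-$2$ reshuffling-resampling (block rate $O(N^{-2})$) produce effects that vanish as $N\to\infty$ and may be discarded, so the $1$-block is, up to $o(1)$ uniformly on compacts, the autonomous mean-field system of \eqref{ag45}--\eqref{ag45alt} with internal migration $c_0$, Fleming--Viot volatility $d_0$ and the quenched resampling measures of its colonies. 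Applying \eqref{ag47} gives the first line of \eqref{ak15new}; at the same time the law of large numbers for the i.i.d.\ initial state and the mean-preservation of the dynamics keep $Y^{(N)}_\eta$ within $o(1)$ of $\theta$, and the colonies relax to the $\omega$-dependent McKean--Vlasov equilibria conditioned on the slowly varying block average (this is where the enlarged configuration set \eqref{ag60}--\eqref{ag61} is used).

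Next I would pass to time scale $Nt$, on which the $N$ processes $(Y^{(N)}_\eta)_{\eta\in G_{N,1}}$ themselves form a mean-field system inside the $2$-block: each $Y^{(N)}_\eta$ drifts towards $Z^{(N)}\approx\theta$ at rate $c_1$, undergoes the block reshuffling-resampling displayed in the second line of \eqref{ak15new}, and acquires an extra Fleming--Viot volatility produced by the now-equilibrated lower-level resampling and migration of its colonies. Computing the generator on a test monomial of $Y^{(N)}_\eta$ and inserting the quasi-equilibrium of the first step, exactly as around \eqref{ag49}, the coefficient of the diffusive term appears as an empirical average over the $N$ colonies of the $\omega$-dependent effective volatilities $c_0(\mu_0\rho(\omega)+d_0)/\big(c_0+(\mu_0\rho(\omega)+d_0)\big)$ evaluated in the local environment. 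By stationarity and ergodicity of the environment --- in particular triviality of the tail $\sigma$-algebra, \eqref{Aprop}--\eqref{fdh:tail} --- this average converges $\bP$-a.s.\ to $\bE_{\CL_\rho}[c_0(\mu_0\rho+d_0)/(c_0+(\mu_0\rho+d_0))]=d_1$, and a second application of \eqref{ag48} with these data yields the second line of \eqref{ak15new} together with the formula for $d_1$ in \eqref{ak16new}. The slowest scale $N^2t$ is obtained by one more repetition: here level-$2$ migration and resampling have been removed ($c_2=\mu_2=0$), so $Z^{(N)}$ retains only its Fleming--Viot part, whose volatility $d_2$ is the ergodic average over the $N$ level-$1$ blocks of $c_1(\mu_1\rho+d_1)/(c_1+(\mu_1\rho+d_1))$; this gives the third line of \eqref{ak15new} and the second identity in \eqref{ak16new}.

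I expect the real difficulty to be in the second step, where the $N\to\infty$ propagation-of-chaos / quasi-equilibrium approximation must be carried out \emph{jointly} with the $\bP$-a.s.\ ergodic averaging over the random environment. One must enlarge the state space as in \eqref{ag60}--\eqref{ag61} to track the empirical distribution of the pairs $(\chi^\sigma(\omega),x_\sigma)$, show that it converges to the prescribed limit $\Gamma$ uniformly enough in the spatial index and in $\omega$ to commute the two limits, and conclude that the increasing process of the limiting martingale problem for $Y^{(N)}_\eta$ is the deterministic one with coefficient $d_1$ rather than an $\omega$-averaged random quantity. This is precisely the new point already flagged in the proof of Theorem~\ref{T.McKVl}; in the homogeneous model of \cite{GHKK14} it is absent because the analogous coefficient is deterministic. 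Everything else --- the $2$-block versions of the tightness and diffusive-limit estimates and the separation-of-time-scales bounds --- transcribes directly from \cite[Section~6]{GHKK14} and the relevant parts of \cite{DGV95}.
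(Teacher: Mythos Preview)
Your overall strategy is correct and matches the paper's: nested applications of the mean-field finite-system scheme (Theorem~\ref{T.McKVl}), separation of the time scales $t$, $Nt$, $N^2t$, and ergodic averaging over the random environment to produce the deterministic $d_1$, $d_2$ via \eqref{diffusion-constants}. The paper proceeds exactly this way, referring back to \cite[Sections~6--7]{GHKK14} for the homogeneous skeleton.

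However, you misidentify where the new difficulty lies and understate it. You write that the tightness and separation-of-scales bounds ``transcribe directly from \cite{GHKK14}''. The paper says precisely the opposite: the \emph{uniform} higher-order generator perturbation estimates used in \cite{GHKK14} fail here because the random resampling rates $\rho^{(\cdot,\eta)}(\omega)$ are unbounded (only finite moments are assumed, cf.~\eqref{Aprop}). This is not the same issue as commuting propagation of chaos with ergodic averaging; it is a quantitative loss of control that blocks the perturbation arguments even before any averaging is attempted. The paper's remedy is to first truncate the coefficients $\lambda^{\mathrm{MC}_k(\cdot,\eta)}(\omega)$ at a level $M<\infty$, run the \cite{GHKK14} argument for the truncated system, and then remove the truncation via a coupling argument showing that the effect vanishes as $M\to\infty$ on each time scale. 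Tightness also needs a separate argument (Joffe--M\'etivier, with the device of coding the environment into the initial condition). Your proposal contains no analogue of either step, so as written it has a genuine gap: without the truncation-plus-coupling you cannot justify discarding the higher-level terms uniformly in $N$ when the rates are unbounded.
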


To prove the above results in the homogeneous environment, we used {\em uniform estimates} 
for higher-order perturbations of generators. These no longer hold in the random environment, 
due to the unboundedness of the random resampling rates $\rho^{(\cdot,\eta)}(\omega)$. (There 
is no problem under assumption \eqref{Apropalt}, and the proof carries over from \cite{GHKK14}.) 

To handle this problem we first consider the system where the coefficients $ \lambda^{\mathrm{MC}_k
(\cdot,\eta)} (\omega)$, $k=1,2$, are truncated at level $M<\infty$. For this system we show, with 
the help of a coupling argument, that on time scale $N^k t$, $k=1,2$, and averaged over the 
random environment and the dynamics, the effect of the truncation goes to zero as $M\to\infty$. 
The same holds for the limiting objects, so that we get the claim by using the existence of the 
expectation in combination with the stationarity of $\omega$.

To get tightness of the approximating sequence of processes, as in \cite[Eq.~(7.52), p.~117]{GHKK14}, 
we use the fact that the laws conditioned on the environment $ \omega $ of the averages in 
\eqref{add21}--\eqref{add20} are tight. To prove the latter, we use the criterion of Joffe and Metivier 
in the form as given in Dawson~\cite[p.~55]{D93}, observing that $\chi^{\mathrm{MC}_k(\cdot,\eta)}
(\omega)$, $\eta \in G_{N,1}$, $k=1,2$, are integrable uniformly in $N$. To check the criterion, we 
observe that we can code the information on the random environment into the initial condition of 
the process. With this observation, the proof works as for the homogeneous environment.

\subsubsection{The $k$-level system on $k+1$ time scales}
\label{sss.kl}

The reasoning addresses the same points raised above and runs otherwise exactly as in 
\cite[Section 7.2]{GHKK14}.

\subsubsection{The infinite-level system on infinitely many time scales}
\label{sss.3l}

The problem is again the extension of the uniform perturbation arguments, which have 
to be adapted to guarantee that cutting off higher hierarchical levels leads to an approximation 
by finite systems, for which we can apply the reasoning in the previous section, on the relevant 
time scales. To get the necessary arguments and estimates we refer the reader to the material 
in \cite[Sections 8.1--8.2]{GHKK14}.

The argument used for the homogeneous environment to obtain uniforms bounds does not apply 
because the perturbation of the migration and the resampling coming from the hierarchical levels 
$\geq k+1$ is unbounded. However, the perturbation terms can be stochastically bounded by a 
random variable that has a {\em finite expectation} over the random environment. Again, it suffices to 
show with the help of a coupling argument that the stochastic dynamics with $k$ hierarchical 
levels approximates the infinite stochastic dynamics on time scales $tN^l$ with $0 \leq l \leq k$. 
Apart from that the argument is the same.

\subsection{Dichotomy in the hierarchical mean-field limit}
\label{ss.HMFLdicho}

In this section we prove Theorem~\ref{T.dicho}. First, we argue that the entrance law exists, a fact 
that was established in Dawson, Greven and Vaillancourt~\cite{DGV95}[Section 6(a), Proposition 6.2] 
for the Fleming-Viot model, based on a variance estimate and the convergence of the sum in the 
coexistence criterion. The argument from that paper carries over despite the $\omega$-dependence 
of the transition kernels of the interaction chain (read this of from \eqref{e2691} and \eqref{e2749alt} below).

Next, we argue that the dichotomy holds. Here, we again follow the strategy for the homogeneous 
environment by calculating the variance of $\langle M^{(j)}_{\eta,0},f \rangle$ for every 
$\eta\in\Omega_\infty$ and $f \in C_b(E,\R)$ and showing that as $j\to\infty$ this variance
converges to zero, respectively, remains positive, depending on whether the sum in \eqref{inftydich} 
is infinite or finite. 

The variance formula reads
\begin{equation}\label{e2690}
\var_{\nu_\theta^{c,d,\Lambda}} (\langle\,\cdot\,,f \rangle) 
= \frac{2c}{2c+\lambda\rho(\omega)+2d} \var_\theta(f).
\end{equation}
Consequently, by iteration, 
\begin{equation}\label{e2691}
\var \langle M_{\eta,0}^{(j)}, f\rangle 
= \left[\prod^{j}_{k=0} \frac{2c_k}{2c_k + \lambda_k\rho_k(\omega) + 2d_k}\right] 
\var_\theta(f),
\end{equation}
where $\ud=(d_k)_{k \in \N_0} $ is determined by the recursion relation in \eqref{diffusion-constants}.
Taking logarithms, we see that the product tends to a positive limit as $j\to\infty$ if and only if
\begin{equation}
\label{e2749}
\sum_{k\in\N_0} \frac{1}{c_k} (\mu_k\rho_k(\omega) + d_k) < \infty.
\end{equation}
By assumptions \eqref{Aprop}--\eqref{fdh:tail}, the sum converges $\omega$-a.s.\ if and only if
\begin{equation}
\label{e2749alt}
\sum_{k\in\N_0} \frac{1}{c_k} (\mu_k + d_k) < \infty.
\end{equation}
Indeed, the variance of the sum in \eqref{e2749} equals the variance of the $\rho$-field times 
$\sum_{k\in\N_0} (\frac{\mu_k}{c_k})^2$, and the latter is bounded from above by the square 
of the average of the sum. As shown in \cite[Theorem 3.7(c)]{GHKK14}, the criterion in \eqref{e2749alt} 
is the same as the criterion in \eqref{inftydich}.

\section{The orbit of the renormalization transformations}
\label{s.completeproof}

In Section~\ref{ss.volcomp} we show the ordering in Theorem~\ref{T.order}. 
In Sections~\ref{ss.scalpol}--\ref{ss.scalexp}, we derive the scaling behaviour 
in Theorems~\ref{T.scaleFVpol}--\ref{T.scaleFVexp}.

\subsection{Random environment lowers the volatility}
\label{ss.volcomp}

\begin{proof}[Proof of Theorem~\textup{\ref{T.order}}]
Recall the notation introduced in Section~\ref{ss.random}. Fix $\uc$ and $\ul$.
Let $\ud$ be the solution of the recursion relation in \eqref{diffusion-constants}. 
Let $\ud^0,\ud^1$ be the solutions when $\CL_\rho$ is replaced by $\delta_0,\delta_1$ 
(recall that $\rho$ has mean 1 under $\CL_\rho$).  As initial values take $d^0_0 \leq 
d_0 \leq d^1_0$. We use induction on $k$ to show that $d^0_k < d_k < d^1_k$ for 
all $k\in\N$.

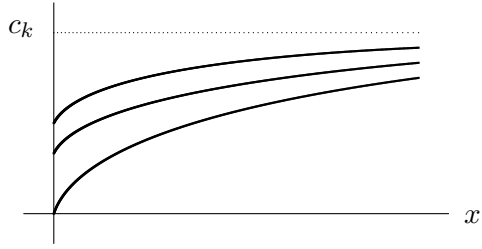
\begin{figure}[htbp]
\vspace{-.5cm}
\begin{center}
\setlength{\unitlength}{0.4cm}
\begin{picture}(14,10)(0,-1)
\put(-1,0){\line(1,0){14}}
\put(0,-1){\line(0,1){8}}
{\thicklines
\qbezier(0,0)(1,3)(12,4.5)
\qbezier(0,2)(1,4)(12,5)
\qbezier(0,3)(1,5)(12,5.5)
}
\qbezier[80](0,6)(5,6)(12,6)
\put(-1.5,6){$c_k$}
\put(13.5,-.3){$x$}
\end{picture}
\caption{\small Qualitative pictures of $x \mapsto f^0_k(x)$ (bottom), $x \mapsto f_k(x)$ (middle) 
and $x \mapsto f^1_k(x)$ (top). All three functions are strictly increasing and strictly concave 
on $[0,\infty)$, and tend to $c_k$ as $x \to \infty$.}
\label{fig-Mobius}
\end{center}
\end{figure}

Define (see Fig.~\ref{fig-Mobius})
\be{fdh:f_k}
f^0_k(x) = \frac{c_kx}{c_k+x},
\qquad
f_k(x) = \bE_{\CL_\rho}\left[\frac{c_k(\mu_k\rho+x)}{c_k+(\mu_k\rho+x)}\right],
\qquad f^1_k(x) = \frac{c_k(\mu_k+x)}{c_k+(\mu_k+x)}.
\ee
Because $a \mapsto c_k(\mu_ka+x)/[c_k+(\mu_ka+x)]$ is strictly increasing and strictly concave 
on $[0,\infty)$ for all $x \in [0,\infty)$, it follows that $f^0_k(x)<f_k(x) < f^1_k(x)$ for all 
$x \in [0,\infty)$. Hence, if $d^0_k \leq d_k \leq d^1_k$, then $d^0_{k+1} = f^0_k(d^0_k)< f_k(d^0_k) 
\leq f_k(d_k) = d_{k+1}$ and $d_{k+1} = f_k(d_k) < f^1_k(d_k) \leq f^1_k(d^*_k) = d^1_{k+1}$.      
\end{proof}

The same argument proves the claim made in Section~\ref{ss.effectre} that $M<M^*$ for the fixed 
points of \eqref{MK} (random environment) and its analogue with $\CL_\rho$ replaced by 
$\delta_1$ (average environment).

\subsection{Scaling of the volatility: polynomial coefficients}
\label{ss.scalpol}

\begin{proof}[Proof of Theorem~\textup{\ref{T.scaleFVpol}}]\label{pr.2742}
We look at each of the four parameter regimes separately.
Recall \eqref{fdh:regcond}--\eqref{Kdef}.

\medskip\noindent
(a) Let $K_k=\mu_k/c_{k-1}$, $R_k=c_k/c_{k-1}$ and $\mho_k=d_k/c_{k-1}$. 
Rewrite \eqref{diffusion-constants} as
\be{fdh:recrew}
\mho_{k+1} = g_k(\mho_k) \quad \text{ with } \quad 
g_k(x) = \bE_{\CL_\rho}\left[\frac{(K_k\rho+x)}{R_k+(K_k\rho+x)}\right].
\ee
Since $g_k$ is non-decreasing on $[0,\infty)$, we have the sandwich 
\be{fdh:sandwich}
g_k(0) \leq  \mho_{k+1} \leq g_k(\infty) = 1.
\ee
We are in the regime where $\lim_{k\to\infty} K_k = K = \infty$ and $\lim_{k\to\infty} R_k = R = 1$. 
Hence $\lim_{k\to\infty} g_k(0) = 1$, and so \eqref{fdh:sandwich} yields $\lim_{k\to\infty} d_k/c_k 
= \lim_{k\to\infty} \mho_k/R_k = 1/R = 1$.

\medskip\noindent
(b) Again use \eqref{fdh:recrew}. We are in the regime where $\lim_{k\to\infty} K_k = K \in (0,\infty)$ 
and $\lim_{k\to\infty} R_k = R = 1$. Hence, we see that $g_k$ converges point-wise to $g$ given 
by
\be{fdh:flim}
g(x) = \bE_{\CL_\rho}\left[\frac{(K\rho+x)}{R+(K\rho+x)}\right].
\ee
Both $g$ and $g_k$ are strictly increasing and strictly concave on $[0,\infty)$, with $g([0,\infty]) 
\subseteq [0,1]$ and $g_k([0,\infty]) \subseteq [0,1]$, with unique attracting fixed points $M \in (0,1)$ 
and $M_k \in (0,1)$, and with $M$ the solution of \eqref{MK}. To show that $\lim_{k\to\infty} \mho_k 
= M$, we need two facts.

\begin{lemma}
\label{sk}
Let $s_k = \sup_{x \in [0,1]} |g_k(x) - g(x)|$. Then $\lim_{k\to\infty} s_k = 0$. 
\hfill $\square$
\end{lemma}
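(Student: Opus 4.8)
The plan is to control the integrand uniformly in $x$ and then integrate over $\CL_\rho$. I would write
\[
\phi_k(\rho,x) = \frac{K_k\rho+x}{R_k+K_k\rho+x}, \qquad
\phi_\infty(\rho,x) = \frac{K\rho+x}{R+K\rho+x},
\]
so that by \eqref{fdh:recrew} and \eqref{fdh:flim} one has $g_k(x)-g(x) = \bE_{\CL_\rho}\bigl[\phi_k(\rho,x)-\phi_\infty(\rho,x)\bigr]$. First I would compute the difference of the two fractions exactly via $\tfrac{a}{b}-\tfrac{c}{d}=\tfrac{ad-bc}{bd}$: a one-line expansion (write $u=K_k\rho+x$, $v=K\rho+x$) gives
\[
\phi_k(\rho,x)-\phi_\infty(\rho,x) = \frac{\rho(R K_k - R_k K) + (R - R_k)\,x}{(R_k+K_k\rho+x)(R+K\rho+x)}.
\]

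Next I would bound numerator and denominator separately. Since $\rho\ge 0$, $x\in[0,1]$ and all the coefficients are positive, the denominator is at least $R_kR$; as $R_k\to R=1$ there is a $k_0$ with $R_k\ge\tfrac12$ for $k\ge k_0$, so the reciprocal of the denominator is $\le 2$ there. For the numerator, $|RK_k-R_kK|\le R|K_k-K|+K|R-R_k| = |K_k-K|+K|1-R_k|$ and $|(R-R_k)x|\le|1-R_k|$. Hence, for all $k\ge k_0$ and all $x\in[0,1]$,
\[
\bigl|\phi_k(\rho,x)-\phi_\infty(\rho,x)\bigr| \le 2\bigl[\rho\,(|K_k-K|+K|1-R_k|)+|1-R_k|\bigr].
\]
Taking $\bE_{\CL_\rho}$ and using $\bE_{\CL_\rho}[\rho]=1$ (from \eqref{Aprop}) turns the right-hand side into $2\bigl[|K_k-K|+(K+1)|1-R_k|\bigr]$, which no longer depends on $x$; therefore $s_k\le 2\bigl[|K_k-K|+(K+1)|1-R_k|\bigr]$ for $k\ge k_0$, and this tends to $0$ because $K_k\to K$ and $R_k\to1$.

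I expect no genuine obstacle here. The only point to keep in mind is that $\rho$ is \emph{unbounded} under $\CL_\rho$, so one must estimate $|\phi_k-\phi_\infty|$ by a quantity that is \emph{linear} in $\rho$ rather than merely bounded — which is exactly what the explicit computation above produces — after which integrability comes for free from the first-moment assumption $\bE_{\CL_\rho}[\rho]=1$. As an alternative route one could instead note that $\partial_x\phi_k = R_k/(R_k+K_k\rho+x)^2\le 1/R_k$, so each $g_k$ is Lipschitz on $[0,1]$ with constant $\le 2$ for $k\ge k_0$; combined with pointwise convergence $g_k(x)\to g(x)$ for fixed $x$ (immediate from dominated convergence, since $0\le\phi_k\le1$ and $\phi_k(\rho,x)\to\phi_\infty(\rho,x)$ pointwise in $\rho$), this equicontinuity upgrades the pointwise convergence to uniform convergence on the compact interval $[0,1]$. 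The direct estimate is shorter and even supplies an explicit rate, so that is the route I would write up.
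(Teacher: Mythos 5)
Your proof is correct and follows essentially the same route as the paper: the identical exact expansion of the difference of the two M\"obius-type fractions into $\bigl[\rho(RK_k-R_kK)+(R-R_k)x\bigr]$ over the product of denominators, followed by elementary bounds. The only (immaterial) difference is that the paper cancels the $\rho$ in the numerator against the $K\rho$ in the denominator to get a $\rho$-free bound, whereas you keep the bound linear in $\rho$ and then integrate using $\bE_{\CL_\rho}[\rho]=1$.
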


\begin{proof}\label{pr.2774}
Estimate
\be{sk1}
\begin{aligned}
&\frac{(K_k\rho+x)}{R_k+(K_k\rho+x)}-\frac{(K\rho+x)}{R+(K\rho+x)}
= \frac{R}{R+(K\rho+x)} - \frac{R_k}{R_k+(K_k\rho+x)}\\
&= \frac{(RK_k-R_kK)\rho+(R-R_k)x}{[R+(K\rho+x)][R_k+(K_k\rho+x)]}
\leq \frac{(RK_k-R_kK)}{KR_k} + \frac{R-R_k}{RR_k}\,x.
\end{aligned}
\ee
This gives
\be{sk2}
s_k \leq \frac{R}{K}\left|\frac{K_k}{R_k}-\frac{K}{R}\right| + \left|\frac{1}{R_k} - \frac{1}{R}\right|.
\ee
Let $k\to\infty$ to get the claim.
\end{proof}

\begin{lemma}
\label{contr}
Function $g$ is a strict contraction around $M$, i.e., there exists a $\beta \in (0,1)$ such that 
$\sup_{x \in [0,\infty)} (g(x)-M)/(x-M) = \beta$. \hfill $\square$
\end{lemma}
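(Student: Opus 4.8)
The plan is to read off the contraction constant $\beta$ directly from the strict concavity of $g$, which has already been established, together with its monotonicity. Recall that $g$ is strictly increasing and strictly concave on $[0,\infty)$, takes values in $[0,1]$, and has the unique (attracting) fixed point $M\in(0,1)$, i.e.\ $g(M)=M$. The only real input is the elementary \emph{decreasing-slopes} property of concave functions: for such a function the secant slope emanating from a fixed base point is monotone, so that
\[
x\longmapsto \frac{g(x)-g(M)}{x-M}=\frac{g(x)-M}{x-M}
\]
is non-increasing on $[0,\infty)\setminus\{M\}$. Hence I would conclude that this ratio is maximised as $x\downarrow 0$, and since $g$ is continuous the maximum is attained at $x=0$, so that
\[
\sup_{x\in[0,\infty)\setminus\{M\}}\frac{g(x)-M}{x-M}=\frac{g(0)-M}{0-M}=1-\frac{g(0)}{M}=:\beta .
\]
If one wants to include $x=M$ in the supremum, the natural value there is $g^\prime(M)$ (which exists by dominated convergence, the $x$-derivative of $(K\rho+x)/(1+K\rho+x)$ being bounded by $1$), and the same monotonicity gives $g^\prime(M)\le\beta$, so nothing changes.

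It then remains to check $\beta\in(0,1)$. For $\beta>0$ I would use that $g$ is strictly increasing, whence $g(0)<g(M)=M$ and $\beta=(M-g(0))/M>0$. For $\beta<1$ I would use that in regime (b) we have $K\in(0,\infty)$, together with the standing assumption $\rho>0$ $\CL_\rho$-a.s.\ (with $\bE_{\CL_\rho}[\rho]=1$); then $g(0)=\bE_{\CL_\rho}[K\rho/(1+K\rho)]>0$, so $\beta=1-g(0)/M<1$. This yields the claim, with the explicit value $\beta=1-g(0)/M$.

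The argument is short, and there is no serious obstacle; the two points that deserve care are (i) that the supremum sits at the boundary $x=0$ rather than in the interior — this is exactly what strict concavity supplies, and is what makes $\beta$ a genuine maximum — and (ii) that $\beta<1$ \emph{strictly}, which relies on $K>0$: when $K=0$ one has $g(0)=0$ and $\beta=1$, so $g$ is only non-expansive and a separate (non-contractive) analysis is required, which is precisely why regimes (c)--(d) of Theorem~\ref{T.scaleFVpol} are handled differently. Combined with Lemma~\ref{sk}, the present lemma will then give $\mho_k\to M$ in case (b) via the routine estimate $|\mho_{k+1}-M|=|g_k(\mho_k)-M|\le s_k+\beta\,|\mho_k-M|$.
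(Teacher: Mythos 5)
Your proof is correct and takes essentially the same route as the paper: the paper compares $g$ with the secant line $L(x)=g(0)+[1-\tfrac{g(0)}{M}]x$ through $(0,g(0))$ and $(M,M)$, which is precisely your decreasing-secant-slope property of concave functions, and both arguments produce the same explicit constant $\beta=1-g(0)/M$ with the same appeal to $g(0)>0$. Your added remarks on why $\beta>0$ and why $K>0$ is needed for $\beta<1$ are accurate but not a different method.
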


\begin{proof}\label{pr.2796}
Consider the linear function $L(x) = g(0) + [1-\frac{g(0)}{M}]x$, $x \in [0,\infty)$, which satisfies 
$L(0)=g(0)$ and $L(M) = M = g(M)$ (see Fig.~\ref{fig-comparison}). Note that $g \geq L$ on $[0,M]$ 
while $g \leq L$ on $[M,\infty)$. Hence, we have
\be{contr1}
0 \leq \frac{g(x)-M}{x-M} \leq \frac{L(x)-M}{x-M} = 1 - \frac{g(0)}{M}. 
\ee
Since $g(0)>0$, we get the claim with $\beta=1-\frac{g(0)}{M}$.
\end{proof}

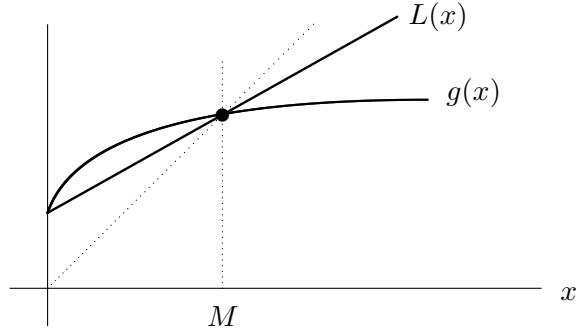
\begin{figure}[htbp]
\begin{center}
\setlength{\unitlength}{0.5cm}
\begin{picture}(14,10)(0,-1.5)
\put(-1,0){\line(1,0){14}}
\put(0,-1){\line(0,1){8}}
{\thicklines
\qbezier(0,2)(1,5)(10,5)
\qbezier(0,2)(4.6,4.6)(9.2,7.2)
}
\qbezier[60](0,0)(3,3)(7,7)
\qbezier[40](4.6,0)(4.6,3)(4.6,6)
\put(10.5,5){$g(x)$}
\put(9.5,7){$L(x)$}
\put(13.5,-.3){$x$}
\put(4.2,-1){$M$}
\put(4.6,4.6){\circle*{0.35}}
\end{picture}
\caption{\small Comparison of $g$ and $L$.}
\label{fig-comparison}
\end{center}
\end{figure}

\noindent
We can now complete the proof as follows. Let  $\Delta_k = |\mho_k-M|$. Then
\be{compl1}
\begin{aligned}
\Delta_{k+1} &= |\mho_{k+1}-M| \leq |\mho_{k+1}-g(\mho_k)| + |g(\mho_k)-M|\\
&= |g_k(\mho_k)-g(\mho_k)| + |g(\mho_k)-M| \leq s_k + \beta\Delta_k.   
\end{aligned}
\ee
Iteration yields
\be{compl2}
\Delta_{k+1} \leq \sum_{l=0}^k \beta^l s_{k-l} + \beta^{k+1}\Delta_0.
\ee
It follows from Lemma~\ref{sk}--\ref{contr} that $\lim_{k\to\infty} \Delta_k = 0$.
Hence $\lim_{k\to\infty} d_k/c_k = \lim_{k\to\infty} \mho_k/R_k = M/R = M$. 

\medskip\noindent
(c--d) Like in Case~(a), the scaling turns out to be the \emph{same} as for the average 
environment. The proof is based on a \emph{comparison} between the recursions for the 
random environment and the average environment (last two items in \eqref{fdh:f_k}). The 
key idea is the following lemma, which can be viewed as a \emph{stability property}.

\begin{lemma}
\label{lem:scal1}
Let $d_0 = d^1_0$. Then, the solution of the recursion $d_{k+1}=f_k(d_k)$, $k\in\N_0$, is 
the same as the solution of the recursion $d^1_{k+1}=f^1_k(d^1_k)$, $k\in\N_0$, when in 
the latter recursion the coefficient  $\mu_k$ is replaced by $\mu_kr_k$ with
\begin{equation}
\label{rkdef}
r_k = \frac{N_k}{D_k}, \qquad N_k = \bE_{\CL_\rho}\left[\frac{c_k\rho}{c_k(1+K_k\rho)+d_k}\right],
\qquad D_k = \bE_{\CL_\rho}\left[\frac{c_k}{c_k(1+K_k\rho)+d_k}\right].
\end{equation} 
\hfill $\square$
\end{lemma}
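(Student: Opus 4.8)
The plan is to prove the identity by a direct algebraic manipulation of the M\"obius map appearing in \eqref{diffusion-constants}, evaluated along the solution $\ud$, and then to conclude by an elementary induction on $k$. Throughout, every denominator occurring below is of the form $c_k+\mu_k\rho+d_k=c_k(1+K_k\rho)+d_k\geq c_k>0$ (using $d_k\geq 0$, which the recursion \eqref{diffusion-constants} preserves, and $\rho\geq 0$), so all the expectations as well as the ratio $r_k=N_k/D_k$ in \eqref{rkdef} are well defined and finite; here $K_k=\mu_k/c_k$, so that $c_k(1+K_k\rho)+d_k$ is literally the denominator in \eqref{diffusion-constants}. The one elementary fact I would isolate first is
\[
\frac{cy}{c+y}=c-\frac{c^2}{c+y},\qquad c,y>0,
\]
which writes the M\"obius map as an affine function of $1/(c+y)$.

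Applying this identity with $y=\mu_k\rho+d_k$ inside the expectation in \eqref{diffusion-constants} gives
\[
d_{k+1}=f_k(d_k)=c_k-c_k^2\,\bE_{\CL_\rho}\!\left[\frac{1}{c_k(1+K_k\rho)+d_k}\right]=c_k-c_kD_k,
\]
since $D_k=c_k\,\bE_{\CL_\rho}[(c_k(1+K_k\rho)+d_k)^{-1}]$. The key step is then to show that the M\"obius map $f^1_k$ with $\mu_k$ replaced by $\mu_kr_k$, evaluated at the same point $d_k$, also equals $c_k-c_kD_k$. For this I would compute $\mu_kr_k$ explicitly: writing $\mu_k\rho=(c_k(1+K_k\rho)+d_k)-(c_k+d_k)$ and using $\bE_{\CL_\rho}[1]=1$ yields
\[
\frac{\mu_k}{c_k}\,N_k=\bE_{\CL_\rho}\!\left[\frac{\mu_k\rho}{c_k(1+K_k\rho)+d_k}\right]=1-(c_k+d_k)\,\frac{D_k}{c_k},
\]
whence $\mu_kr_k=\mu_kN_k/D_k=c_k/D_k-(c_k+d_k)$, i.e.\ $c_k+\mu_kr_k+d_k=c_k/D_k$. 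Feeding this into the linearised form of $f^1_k$ gives $\frac{c_k(\mu_kr_k+d_k)}{c_k+(\mu_kr_k+d_k)}=c_k-\frac{c_k^2}{c_k/D_k}=c_k-c_kD_k=f_k(d_k)$, the required coincidence at step $k$. As a by-product, Jensen's inequality for $x\mapsto 1/x$ together with $\bE_{\CL_\rho}[\rho]=1$ gives $D_k\geq c_k/(c_k+\mu_k+d_k)$, hence $0\leq r_k\leq 1$; this is precisely the mechanism behind Theorem~\ref{T.order}, namely that the random environment effectively shrinks the resampling coefficient.

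The lemma then follows by induction on $k$: the base case is the hypothesis $d_0=d^1_0$, and if the solution of $d_{k+1}=f_k(d_k)$ and the solution of the $f^1$-recursion with coefficients $\mu_lr_l$ agree through level $k$, then the computation above — in which $r_k$ is defined through the value $d_k$ already reached — shows that the two $(k+1)$-st steps agree as well. I do not expect a genuine obstacle here; the only points requiring care are the bookkeeping of the index convention $K_k=\mu_k/c_k$ (so that the denominators in \eqref{rkdef} and \eqref{diffusion-constants} match) and the trivial verification that all denominators stay bounded away from $0$, which makes the expectations and the quotient $N_k/D_k$ legitimate.
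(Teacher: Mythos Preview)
Your proof is correct and follows the same approach as the paper: verify the one-step identity $\frac{c_k(\mu_kr_k+d_k)}{c_k+(\mu_kr_k+d_k)}=f_k(d_k)$ and conclude by induction. The paper's proof is a one-liner (``Check that \ldots\ and use induction on $k$''), and you have carried out precisely that verification via the linearisation $\frac{cy}{c+y}=c-\frac{c^2}{c+y}$ together with the algebraic identity $c_k+\mu_kr_k+d_k=c_k/D_k$; your side remark on $r_k\leq 1$ is also noted (by a slightly different argument) in the paper immediately after the lemma.
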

 
\bpr\label{pr.2860}
Check that
\begin{equation}\label{e2894}
\frac{c_k(\mu_kr_k+d_k)}{c_k+(\mu_kr_k+d_k)} 
= \bE_{\CL_\rho}\left[\frac{c_k(\mu_k\rho+d_k)}{c_k + (\mu_k\rho+d_k)}\right]
= d_{k+1},
\end{equation}
and use induction on $k$.
\epr 

\noindent
Since $\rho \mapsto c_k/[c_k(1+K_k\rho)+d_k]$ is non-increasing, we have $N_k \leq D_k
\bE_{\CL_\rho}[\rho] = D_k$, and so $r_k \leq 1$. The following result shows that $r_k$ tends 
to 1 as $k \to \infty$ in Cases (c) and (d). 

\begin{lemma}
\label{lem:scal2}
If $\lim_{k\to\infty} K_k = K = 0$, then $\lim_{k\to\infty} r_k=1$. 
\hfill $\square$
\end{lemma}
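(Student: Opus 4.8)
The plan is to reduce Lemma~\ref{lem:scal2} to an elementary correlation inequality for the single random variable $\rho$, using only $\bE_{\CL_\rho}[\rho]=1$ and $\bE_{\CL_\rho}[\rho^2]=C<\infty$ from \eqref{Aprop}. First I would normalise the quotient $r_k=N_k/D_k$ from \eqref{rkdef}: dividing numerator and denominator inside the two expectations by $c_k$ and abbreviating $\delta_k=d_k/c_k\in[0,\infty)$ and $K_k=\mu_k/c_k$ (so that $c_k(1+K_k\rho)+d_k=c_k+\mu_k\rho+d_k$), one obtains
\[
r_k=\frac{\bE_{\CL_\rho}[\rho\,g_k(\rho)]}{\bE_{\CL_\rho}[g_k(\rho)]},
\qquad g_k(t)=\frac{1}{1+K_kt+\delta_k},\quad t\ge 0,
\]
where $g_k$ is strictly decreasing. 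Since $\bE_{\CL_\rho}[1-\rho]=0$, I would then centre the numerator and write
\[
1-r_k=\frac{\bE_{\CL_\rho}[(1-\rho)g_k(\rho)]}{\bE_{\CL_\rho}[g_k(\rho)]}
=\frac{\bE_{\CL_\rho}\bigl[(1-\rho)\bigl(g_k(\rho)-g_k(1)\bigr)\bigr]}{\bE_{\CL_\rho}[g_k(\rho)]}.
\]
Monotonicity of $g_k$ makes the integrand in the numerator pointwise nonnegative, which reproves the bound $r_k\le 1$ already noted after Lemma~\ref{lem:scal1}; the substance is the matching upper bound on $1-r_k$.

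For the numerator I would use the exact identity $g_k(\rho)-g_k(1)=K_k(1-\rho)/[(1+K_k\rho+\delta_k)(1+K_k+\delta_k)]$ and bound both denominator factors below by $1+\delta_k$ (since $K_k\rho,K_k\ge 0$), giving $|g_k(\rho)-g_k(1)|\le K_k|1-\rho|/(1+\delta_k)^2$ and hence
\[
0\le \bE_{\CL_\rho}\bigl[(1-\rho)\bigl(g_k(\rho)-g_k(1)\bigr)\bigr]\le \frac{K_k}{(1+\delta_k)^2}\,\bE_{\CL_\rho}[(1-\rho)^2]=\frac{K_k(C-1)}{(1+\delta_k)^2}.
\]
For the denominator, Jensen's inequality for the convex map $x\mapsto 1/x$ yields $\bE_{\CL_\rho}[g_k(\rho)]\ge 1/(1+K_k+\delta_k)$. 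Combining the two bounds, for every $k$ large enough that $K_k\le 1$ (which holds eventually, since $K_k\to 0$),
\[
0\le 1-r_k\le K_k(C-1)\,\frac{1+K_k+\delta_k}{(1+\delta_k)^2}\le K_k(C-1)\,\frac{2+\delta_k}{(1+\delta_k)^2}\le 2(C-1)\,K_k,
\]
the last step being the elementary observation that $\delta\mapsto(2+\delta)/(1+\delta)^2$ is nonincreasing on $[0,\infty)$ with maximal value $2$ at $\delta=0$. Letting $k\to\infty$ then gives $r_k\to 1$; the degenerate case $\rho\equiv 1$ (i.e.\ $C=1$) is trivial since there $r_k=1$ for all $k$.

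The step I expect to be the main obstacle is controlling the denominator against the possibly unbounded quantity $\delta_k=d_k/c_k$, since a crude estimate of the ratio could diverge. The device above circumvents this: bounding the two denominator factors of $g_k(\rho)-g_k(1)$ from below by $1+\delta_k$ makes the numerator decay like $(1+\delta_k)^{-2}$ whereas the denominator lower bound only decays like $(1+\delta_k)^{-1}$, so the $\delta_k$-dependence cancels up to a universal constant. (Alternatively, under the polynomial scaling \eqref{fdh:regcond} one may note $d_k<c_{k-1}$ directly from \eqref{diffusion-constants} and $c_{k-1}/c_k\to 1$ by slow variation, whence $(\delta_k)$ is bounded and even the cruder bound $1-r_k\le K_k(C-1)(1+K_k+\delta_k)$ suffices.) Either way the argument uses nothing beyond \eqref{Aprop}; in particular it does not invoke the conclusions of Theorem~\ref{T.scaleFVpol}, so there is no circularity.
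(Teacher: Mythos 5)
Your proof is correct, but it takes a different route from the paper's. The paper proves the lemma by truncation: it bounds $N_k \geq \frac{c_k}{c_k(1+K_kC)+d_k}\,\bE_{\CL_\rho}[\rho\,1_{\{\rho \leq C\}}]$ and $D_k \leq \frac{c_k}{c_k+d_k}$ for an arbitrary truncation level $C$, lets $k\to\infty$ to get $\liminf_k r_k \geq \bE_{\CL_\rho}[\rho\,1_{\{\rho \leq C\}}]$, and then sends $C\to\infty$ via monotone convergence. That argument uses only $\bE_{\CL_\rho}[\rho]=1$ and yields qualitative convergence. You instead exploit the finite second moment in \eqref{Aprop}: writing $1-r_k$ as a normalised covariance $\bE_{\CL_\rho}[(1-\rho)(g_k(\rho)-g_k(1))]/\bE_{\CL_\rho}[g_k(\rho)]$, using the exact M\"obius identity for $g_k(\rho)-g_k(1)$, and handling the possibly unbounded $\delta_k=d_k/c_k$ by the $(1+\delta_k)^{-2}$ versus $(1+\delta_k)^{-1}$ cancellation together with Jensen in the denominator. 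Every step checks out (the sign argument, the identity, the bound $\bE_{\CL_\rho}[(1-\rho)^2]=C-1$, the monotonicity of $\delta\mapsto(2+\delta)/(1+\delta)^2$), and there is indeed no circularity with Theorem~\ref{T.scaleFVpol}. What your approach buys is an explicit rate, $0\leq 1-r_k\leq 2(C-1)K_k$, which the paper's truncation argument does not provide; what the paper's approach buys is that it would survive even without the second-moment hypothesis, needing only integrability of $\rho$.
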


\bpr\label{pr.2879}
For any $C \in (0,\infty)$, we may estimate
\begin{equation}\label{e2913}
N_k \geq \frac{c_k}{c_k(1+K_kC)+d_k}\,\bE_{\CL_\rho}\left[\rho\,1_{\{\rho \leq C\}}\right],
\qquad D_k \leq \frac{c_k}{c_k+d_k}.
\end{equation}
Since $\lim_{k\to\infty} K_k=0$, we have $\lim_{k\to\infty} (c_k+d_k)/[c_k(1+K_kC)+d_k]
=1$, and hence 
\begin{equation}\label{e2919}
\liminf_{k\to\infty} \frac{N_k}{D_k} \geq   \bE_{\CL_\rho}\left[\rho\,1_{\{\rho \leq C\}}\right].
\end{equation}
Now let $C\to\infty$ and use that $\lim_{C\to\infty} \bE_{\CL_\rho}[\rho\,1_{\{\rho \leq C\}}] 
= \bE_{\CL_\rho}[\rho]=1$ by monotone convergence. 
\epr 
 
\medskip\noindent
Lemma~\ref{lem:scal1} implies that the scaling of $d_k$ is the same as the scaling of $d^1_k$ 
after $\mu_k$ is replaced by $\mu_kr_k$. But the latter scaling was derived in \cite{GHKK14}, 
and a glance at the results for Cases~(c) and (d) obtained there shows that the scaling is 
unaffected by the extra factor $r_k$ because of Lemma~\ref{lem:scal2}.
\epr

A technical remark is in order, for which we refer the reader to \cite[Section~11.3]{GHKK14}. 
We have assumed that $k \mapsto \mu_k$ is regularly varying at infinity (recall \eqref{fdh:regcond}). 
Because $\lim_{k\to\infty} r_k = 1$, also $k \mapsto r_k\mu_k$ is regularly varying at infinity. 
Therefore, $(r_k\mu_k)_{k\in\N_0}$ can be approximated from above and from below by sequences 
that have the same scaling behaviour but are \emph{smoothly varying}, i.e., for all $n\in\N$ their 
$n$-th order discrete differences are regularly varying as well. This approximation is harmless 
because the maps $\underline{c} \mapsto \underline{d}$ and $\underline{\mu} \mapsto \underline{d}$ 
are component-wise non-decreasing (a fact that is immediate from \eqref{diffusion-constants}), and 
so the approximating sequences provide a \emph{sandwich} for the scaling. Now, if the tail exponent 
of $r_k\mu_k$ is non-integer, i.e., $b \notin \N$ in \eqref{fdh:regcond}, then for all $n\in\N$ the $n$-th 
order discrete differences are \emph{asymptotically monotone}. This observation is important because 
it implies that certain sequences arising in \cite[Section~11.3]{GHKK14} have \emph{summable variation}, 
a property that is crucial for the proof of the scaling. If the tail exponent is integer, i.e., $b \in \N$ in 
\eqref{fdh:regcond}, then the asymptotic monotonicity still holds for all $n \leq b$, which turns out to 
be enough for the argument.

The extra regularity conditions on $L_c,L_\mu$ in \eqref{fdh:regcond}, which are stated in 
\cite[Eqs.\ (1.79)--(1.81)]{GHKK14}, need no modification: $(r_k\mu_k)_{k\in\N_0}$ has the same 
slowly varying function $L_\mu$ as  $(\mu_k)_{k\in\N_0}$.

\subsection{Scaling of the volatility: exponential coefficients}
\label{ss.scalexp}

\bpr[Proof of Theorem~\textup{\ref{T.scaleFVexp}}]\label{pr.2914}
We look at each of the five parameter regimes (= universality classes) separately.
Recall (\ref{expregvar}--\ref{barK}).

\paragraph{(A)} 
Use \eqref{fdh:recrew}. We are in the regime where $\lim_{k\to\infty} K_k=K=\infty$ and 
$\lim_{k\to\infty} R_k = c$. The same argument as in the proof of Case~(a) yields 
$\lim_{k\to\infty} d_k/c_k = \lim_{k\to\infty} \mho_k/R_k = 1/c$.

\paragraph{(B)} 
Let $\bar{K}_k = \bar{\mu}_k/\bar{c}_{k-1}$ and $\bar{R}_k = \bar{c}_k/\bar{c}_{k-1}$. Then 
$K_k = c \bar{K}_k$ and $R_k = c \bar{R}_k$ by \eqref{expregvar}, and so \eqref{fdh:recrew} 
becomes
\be{fdh:recrewbar}
\mho_{k+1} = \bar{g}_k(\mho_k) \quad \text{ with } \quad 
\bar{g}_k(x) = \bE_{\CL_\rho}\left[\frac{(c \bar{K}_k\rho+x)}{c \bar{R}_k+(c \bar{K}_k\rho+x)}\right].
\ee
We are in the regime where $\lim_{k\to\infty} \bar{K}_k = \bar{K} \in (0,\infty)$ and $\lim_{k\to\infty} 
\bar{R}_k = \bar{R} = 1$. The same argument as in Case~(b) therefore yields $\lim_{k\to\infty} 
d_k/c_k = \lim_{k\to\infty} \mho_k/R_k = \bar{M}/c\bar{R} = \bar{M}/c$ with $\bar{M}$ the unique 
attracting fixed point of
\be{fdh:barflim}
\bar{g}(x) = \bE_{\CL_\rho}\left[\frac{(c\bar{K}\rho+x)}{c+(c\bar{K}\rho+x)}\right],
\ee
which is the analogue of \eqref{fdh:flim}.

\paragraph{(C1)} 
This case is the same as Case~(B), but with $\bar{K}=0$. The analogue of \eqref{fdh:barflim} reads 
$\bar{g}(x) = x/(c+x)$. Since $\bar{g}$ has $\bar{M}=1-c \in (0,1)$ as unique attracting fixed point, 
we can copy the proof of Case~(b) to get $\lim_{k\to\infty} d_k/c_k = \lim_{k\to\infty} \mho_k/R_k 
= (1-c)/c\bar{R} = (1-c)/c$. \emph{Note}: In the proof of Case~(b) we used that $g(0)>0$, which fails 
here. However, even when $d_0=0$, the iterates $d_k$, $k\in\N$, are bounded away from $0$ 
because the attracting fixed points of $f_k$, $k\in\N$, are bounded away from $0$. Hence we may 
restrict the entire argument to $[\epsilon,1]$ for some $\epsilon>0$ instead of $[0,1]$, and use 
that $g(\epsilon)>0$ (recall Fig.~\ref{fig-comparison}).

\paragraph{(C2)} 
This case is like Case~(c). Since $\bar{K}=0$, we can copy the proof of Case~(c) and show that 
the same scaling holds as in the average environment.

\paragraph{(C3)} 
This case is like Case~(d). Since $\bar{K}=0$, we can copy the proof of Case~(d) and show that 
the same scaling holds as in the average environment.  
\epr

\section{Identification of the universality classes of cluster formation}
\label{s.clustering}

In this section we prove Theorem~\ref{T.cluform}. In Section~\ref{s.size} we deal with cases 
(a), (A) and (b), (B), (C1), in Section~\ref{s.order} with cases (c), (C2) and (d), (C3). 
The strategy of proof is the same as for the homogeneous environment, except at a few points 
where the random environment comes into play seriously. We focus on the necessary 
modifications. Like Section~\ref{s.mkvrand}, this section is \emph{not completely autonomous}, and for an 
understanding of the fine details the reader must check the relevant passages in \cite{GHKK14}.

Before we begin we recall why we may choose the starting configuration to be identically equal 
to $\theta$, the mean of the starting configuration. The initial state and the environment of our 
Cannings process are such (recall Theorem~\ref{mainth}) that the scaling limit in 
\eqref{macroscopic-behaviour} yields on average $\theta$ on level $j+1$.

\subsection{Random cluster size}
\label{s.size}

\begin{proof}[\bf Proof of cases \textup{(b), (B), (C1), (C3)[first subcase]}]
In Step 1 we give the proof for an i.i.d.\ random environment. In Step 2 we extend the proof to 
a stationary and ergodic random environment.

\paragraph{Step 1.} 
We consider the set $\CM_f([0,1]) \times \CP(E)$, describing the environment and the state of a block. 
If the random environment is i.i.d., then the sequence
\be{ag50}
\Big(\chi^{(\eta,j+1-\alpha)}(\omega), M^{(j)}_{-(j+1-\alpha)}\Big)_{\alpha \in \N_0}
\ee
is a time-inhomogeneous Markov chain. Let $(K^{\ast,(j)}_\alpha)_{\alpha\in\N_0}$ be its sequence of 
transition kernels. (We suppress the index $\eta$ from $M^{(j)}_{\eta,-(j+1-\alpha)}$ because its law is 
the same for all $\eta\in\Omega_N$.) It suffices to prove three properties: 
\begin{itemize}
\item[(1)] 
The sequence of transition kernels $(K^{\ast,(j)}_\alpha)_{\alpha\in\N_0}$ converges as $j \to \infty$ 
to the sequence $(K^{\ast,\infty}_\alpha)_{\alpha\in\N_0}$ of transition kernels given by
\be{ag51}
K^{\ast,\infty}_\alpha((\chi, \theta), \cdot) 
= \CL \left[\chi^\alpha \otimes \nu_\theta^{1,\wt M, 2 \wt K \chi^{\alpha}(\omega)}\right] (\cdot),
\ee
where $\wt M,\wt K$ are defined in \eqref{wtMwtK} and $(\chi_\alpha(\omega))_{\alpha\in\N_0}$ in 
\eqref{regi3}.
\item[(2)] The map 
\be{ag52}
((0,\infty) \times (0,\infty) \times \CP([0,1])) \ni 
(c,d,\Lambda) \mapsto \nu^{c,d, \Lambda}_\theta \in  \CP(\CP(E))
\ee
is continuous.
\item[(3)] The map 
\be{ag53}
\CP(E) \ni \theta \mapsto \nu^{c, d, \Lambda}_\theta \in \CP(\CP(E))
\ee
is continuous.
\end{itemize}
Items (1) and (3) imply the convergence of the process in \eqref{ag50}, while item (2) is needed in 
the proof item (1). 

\begin{proof}[Proof of \textup{(1)--(3)}]\label{pr.1-3}
Here a key is the {\em duality relation} for the McKean-Vlasov limit process. This duality arises as 
a special case of our duality relation by choosing a suitable geographic space. This coalescent is 
obtained by taking as space $\{0,\ast\}$, where the rates for all transitions in $ \ast $ are zero (cemetery) 
state and jumps occur from $0$ to $\ast$ at rate $c$. Kingman coalescence occurs at rate $d$ and 
the $\Lambda$-coalescence is given via $\Lambda$ (all as long as we are in $0$). For a detailed discussion,
see \cite[Section 4]{GHKK14}.

With the help of duality we can identify the equilibrium measure $\nu_\theta^{c,d,K\chi}$ by using a 
measure-determining sequence of test functions. The parameters $c,d,\chi $ enter via the rate of 
jump to the cemetery state (parameter $c$), the rate of pairwise coalescence (parameter $d$), 
and the rate of coalescence (parameter $\chi$). In the latter, the ratio $\chi/\chi((0,1])$ determines 
the probability for partition elements to coalesce in groups ($\Lambda$-coalescence). In this  
equilibrium representation, the coalescent has run for {\em infinite} time.

\medskip\noindent
(1) With $\CL$ acting on $ \chi_{j+1-\alpha}(\omega)$, we have
\begin{equation}\label{e3052}
K^{\ast,(j)}_\alpha((\chi,\theta),\cdot) = \CL \left[ (\chi^{j+1-\alpha}) \otimes \nu_\theta^{1,d_{j+1-\alpha}/c_{j+1-\alpha}, 2K_{j+1-\alpha} \chi^{j+1-\alpha}(\omega)}\right] (\cdot).
\end{equation}
From Theorems~\eqref{T.scaleFVpol} and \eqref{T.scaleFVexp}, we know that $d_{j+1-\alpha}
/c_{j+1-\alpha}$ and $K_{j+1-\alpha}$ converge to $\widetilde{M}$ and $\widetilde{K}$ as 
$j\to\infty$. The point is to show for every $\alpha\in\N_0$ the equilibrium measure in the 
right-hand side converges as $j\to\infty$. By the stationarity of the random environment, the 
law of $\chi_{j+1-\alpha}(\omega)$ is independent of $j$. Hence (2) and (3) yield the claim.

\medskip\noindent
(2) The continuity in \eqref{ag52} can be deduced from the dual representation in the McKean-Vlasov 
limit dynamic, in particular, from the fact that the coalescent has run for infinite time, and depends 
continuously on the migration rate $c$ and the Kingman coalescence rate $d$, respectively, the 
rates for the $ \Lambda$-coalescence. The coalescent has a monotone 
decreasing number of partition elements off the cemetery where all rates are zero and reaches 
the cemetery state after a finite time.
This means we have a Markov chain hitting a trap in finite time and therefore depends continuously on the finitely many involved jump-rates.

\medskip\noindent
(3) The continuity in \eqref{ag53} is deduced from the dual representation. We have to show that 
the dual expectation depends continuously on $\theta$, which goes as follows. First note that the 
monomials $ \{\,\langle\cdot\,,f\rangle^\ell\colon\,f \in C_b (E,\R),\,\ell \in\N\}$ are measure-determining 
on $(E,\CB)$. The dual expectation is a finite sum over terms arising from partition elements that 
are coalescing before jumping to the cemetery state. If $\ell$ partition elements remain, then the 
$\theta$-dependence is via $\langle \theta,f \rangle^\ell$, which is a continuous function of $\theta$.
\end{proof}

\paragraph{Step 2.} 
To deal with a stationary and ergodic random environment, we {\em condition} on the sequence
$(\chi_\alpha)_{\alpha\in\N_0}$. This leads to a sequence of Markov chains in random environment, 
indexed by $j$, for which the result in (1) holds, as explained above. After that we argue that 
(1)--(3) again imply the claim, because of the stationarity and the fact that we need only consider finite 
$\alpha$.

Next, we consider the finite-dimensional laws of the Markov chain in random environment conditional 
on $(\chi_\alpha)_{\alpha\in\N_0}$ and we verify the appropriate versions of (1)--(3). To this end, 
we extend the duality to a {\em space-time duality} and obtain an expression for the {\em mixed 
space-time moments} in terms of triples of parameters
\begin{equation}\label{e3076}
{(c_k, d_k, \chi_k)}_{k = j+1, j, \ldots , j+1-L},
\end{equation}
with $L$ being the order of the marginal distribution we consider. 

In the {\em space-time dual}, we work with frozen partition elements which are activated (then 
once and forever) at a present time. Namely we add partition elements marked by a label in 
$[0,\infty]$, which indicates from which time on the mechanisms of the coalescent are activated. 
Before this time, the partition element neither moves nor coalesces. This allows us to characterize 
the finite-dimensional marginals of the forward process. Suppose that we want to study the 
finite-dimensional distributions associated with times $0 \leq t_1 < t_2 < t_3 \ldots < t_n<t$. Then 
we take individuals marked with $0,t-t_n,t-t_{n-1}, \ldots, t-t_1$, consider the test functions in the 
duality relations for the time horizon $t_1,t_2, \ldots, t_n,t$, and form the product. The duality 
relation holds again. Compare with Greven, Sun and Winter~\cite[Corollary 1.20]{GSW}.

In this setting, (1)--(3) turn into claims about the expectation of the duality expression under the 
law of the space-time coalescent, after which the argument proceeds as above.
\end{proof}

\begin{proof}[\bf Proof of cases \textup{(a), (A)}]\label{pr.ofcasesaA}
The limiting transition kernel of the rescaled interaction chain for a given environment degenerates 
to a transition kernel concentrated on the traps. We have
\begin{equation}\label{e3086}
K = \infty, \qquad \lim_{k\to\infty} d_k/c_k = 1.
\end{equation}
We must therefore show that
\begin{equation}\label{e3090}
\lim_{K\to\infty} \CL\left[\nu_\theta^{1,1,2 K\chi}\right] 
= \CL\left[\int_E \theta (\dd u)\delta_{\delta_u}\right].
\end{equation}
Taking the dual representation, we see that as $K\to\infty$ the rate of the $\Lambda$-coalescence 
tends to infinity, implying that the coalescent converges before it jumps, and coalesces into a single 
partition element. The duality relation says that the original McKean-Vlasov process is in a 
mono-type equilibrium, where the type is chosen at random according to $\theta$. The claim now 
follows because for $K(\theta,\cdot) = \int_E \theta (\dd u) \delta_{\delta_u}(\cdot)$ the state 
$\delta_u$ is a {\em trap}, so that the limiting Markov chain is constant for every $\alpha \neq 0$, 
the constant being chosen according to $\theta$ for every realization of the random environment.
\end{proof}

\subsection{Random cluster order}
\label{s.order}

\begin{proof}[\bf Proof of cases \textup{(c), (C2)} and \textup{(d), (C3)[second subcase]}]
In cases (c) and (d), averaging takes place via a law of large numbers and the situation is similar 
to the homogeneous environment, for which the results in Theorem~\ref{T.cluform} are of the same 
type, and it is only the formula for $d$ that changes.

The claim is that the interaction chain, which is a space-time rescaled Markov chain and a 
measure-valued square-integrable martingale, converges to a limit that is a measure-valued 
diffusion and a square-integrable martingale. In \cite[Section 6(b)]{DGV95}, it was pointed out 
how, for the case of the Fleming-Viot process, this convergence reduces to the study of the 
process of conditioned variances along the path, which in turn reduces to showing the following 
asymptotic relations for these objects. Pick $\alpha_1,\alpha_2 \in I$ with $\alpha_2<\alpha_1$, 
and suppose that $\lim_{j\to\infty} k_\alpha(j)/j=\beta(\alpha)$ with $0 \leq \beta(\alpha_1) 
< \beta(\alpha_2) \leq 1$. If the scaled Markov chain is such that
\begin{equation}
\label{Mjscalclustalt}
\lim_{j\to\infty} \var\left(\left\langle f,M^{(j)}_{-k_{\alpha_1}(j)}\right\rangle ~\Big|~
M^{(j)}_{-k_{\alpha_2}(j)}=\theta\right) 
= \frac{\beta(\alpha_2) - \beta(\alpha_1)}{\beta(\alpha_2)}\,\var_\theta(f), \qquad f \in C_b(E,\R),
\end{equation}
with $\beta(\alpha)=1-\alpha$, then by applying the transformation $\beta(\alpha) = \eee^{-s}$ 
the right-hand side turns into the expression $(1-e^{-(s_1-s_2)})\,\var_\theta(f)$. Since
this scales like $(s_1-s_2)\,\var_\theta(f)$ for $s_1 \downarrow s_2$, we see that the standard
Fleming-Viot process $Y(s)_{s \geq 0}$ appears as the scaling limit. Since $s=\log(1/(1-\alpha))$, 
we get the time-scaled Fleming-Viot process $Y(\log(1/(1-\alpha))_{\alpha \in [0,1)}$ (see 
\cite[Section 6]{DGV95}). 

With suitable time transformations, we can also handle the other forms of scaling $j \to k_\alpha(j)$ 
in Definition~\ref{def:clusuniv}. Namely, we have to identify the function $F(\alpha_1,\alpha_2)$
appearing in front of $\var_\theta(f)$ and find the transformation $\alpha = L(s)$ such that 
\begin{equation}
\label{delF}
\Delta F(s_2) = \lim_{s_1 \downarrow s_2} \frac{F(L(s_1),L(s_2))}{s_1-s_2} \equiv 1,
\end{equation}
so that again the standard Fleming-Viot process $(Y(s))_{s \geq 0}$ appears as the scaling limit. 
Since $s=L^{-1}(\alpha)$, we get the time-scaled Fleming-Viot process $(Y(L^{-1}(\alpha))_{\alpha \in I}$.

It was pointed out in \cite[Section 9.3]{GHKK14} how \eqref{Mjscalclustalt} is established for the 
homogeneous hierarchical Cannings process by using the scaling analysis of the coefficients 
$\ud = (d_k)_{k \in \N_0}$. In our case, we need to work with a random sequence 
$(\mu_k\rho_k(\omega))_{k \in \N_0} $ instead of $(\mu_k)_{k \in \N_0}$, where $\rho_k(\omega)$ 
arises from the term $\Lambda=\Lambda^{(\eta,k)}((0,1])(\omega)$ in the following variance formula
\begin{align}\label{a3103}
\int_E \mu_\theta^{c,d,\Lambda}(\dd x)\big(\langle f,x\rangle^2-\langle f,\theta\rangle^2\big)
= \frac{2c}{2c + \lambda\rho_k(\omega) + 2d}\,\var_\theta(\langle f,x\rangle), \qquad f \in C_b(E,\R).
\end{align} 
We thus have to see whether the product (with $\rho_k(\omega) = \rho^{\mathrm{MC}_k(0)}(\omega)$)
\begin{equation}\label{e3108}
\prod_{k=j_1}^{j_2} \frac{2c_k}{2c_k+\lambda_k\rho_k(\omega)+2d_k},
\end{equation}
appearing in the expression for the variance in \eqref{Mjscalclustalt}, does indeed exhibit averaging 
based on the tail triviality of the random sequence $(\rho_k(\omega))_{k \in \N_0}$ (see 
\cite[Eq.~(8.14)]{GHKK14}). 

To that end, we abbreviate
\begin{equation}
\label{e3116}
m_k(\omega)=\frac{\mu_k \rho_k(\omega)+d_k}{c_k},
\ee
consider the relation
\begin{equation}
\label{e2858}
\var\left( \left\langle M^{(j_2)}_{j_1},f\right\rangle \mid M^{(j_2)}_{j_2+1} = \theta\right) 
= \left[\sum_{k=j_1}^{j_2} \frac{d_{k+1}}{c_k} \prod^{j_2}_{l=k+1}
\frac{1}{1+m_l(\omega)}\right]\, \var(\langle \theta,f \rangle)
\end{equation}
and analyse its behaviour as $j\to\infty$ for appropriate choices of $j_1=j_1(j)$ and $j_2=j_2(j)$. 
We must show that, for $\mathbb{P}$ almost all $\omega$, \eqref{e2858} behave asymptotically like the right-hand 
side of \eqref{Mjscalclustalt}, and we must identify the associated $F$, $\Delta F$ and $L$. 

In order to decide how the product scales as $j_2-j_1\to\infty$, we take logarithms to turn this 
into the question whether the sum
\begin{equation}
\label{e3128}
\sum_{k=j_1}^{j_2} \frac{\mu_k\rho_k(\omega)+d_k}{c_k} 
= \sum^{j_2}_{k=j_1} m_k(\omega)
\end{equation}
has a certain scaling behaviour, and we link this to the scaling behaviour of $\mu_k/c_k$ and 
$d_k/c_k$ for $k\to\infty$ (which we know from Theorems~\ref{T.scaleFVpol} and \ref{T.scaleFVexp}) 
to derive the relevant asymptotics. We have to show that this asymptotics does \emph{not} 
depend on $\omega$ and is equal to that with $\rho_k(\omega)$ replaced by its mean $1$. To
achieve the latter, we use the stationarity of $(\rho_k(\omega))_{k \in \N_0}$, plus the fact that it 
has bounded and decaying covariances (recall \eqref{Aprop}--\eqref{fdh:tail}). The key is the 
following lemma.

\begin{lemma}
\label{lem:wlln}
Define $S(j_1,j_2)(\omega) = \sum_{k=j_1}^{j_2} m_k(\omega)$. Then,
\begin{equation}
\lim_{j_2-j_1\to\infty} \frac{S(j_1,j_2)(\omega)}{\mathbb{E}[S(j_1,j_2)(\omega)]} = 1 
\quad \text{ in $\mathbb{P}$-probability}.
\end{equation} 
\end{lemma}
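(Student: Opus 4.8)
The plan is to apply an $L^2$ weak law of large numbers to the random part of $S(j_1,j_2)(\omega)$. Write $m_k(\omega) = \frac{\mu_k}{c_k}\rho_k(\omega) + \frac{d_k}{c_k}$, so that $S(j_1,j_2)(\omega) = \sum_{k=j_1}^{j_2} \frac{\mu_k}{c_k}\rho_k(\omega) + \sum_{k=j_1}^{j_2} \frac{d_k}{c_k}$. The second sum is deterministic, so it contributes equally to $S$ and to $\mathbb{E}[S]$ and can be set aside: writing $T(j_1,j_2)(\omega) = \sum_{k=j_1}^{j_2} K_k\,\rho_k(\omega)$ with $K_k = \mu_k/c_k$ (recall the notation $K_k$ from \eqref{fdh:recrew}), it suffices to show that $\operatorname{Var}\big(T(j_1,j_2)(\omega)\big) = o\big(\mathbb{E}[S(j_1,j_2)(\omega)]^2\big)$ as $j_2-j_1\to\infty$, since then Chebyshev's inequality gives $S/\mathbb{E}[S] \to 1$ in $\mathbb{P}$-probability.

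First I would bound the variance of $T$. By \eqref{Aprop} the field $(\rho_k(\omega))_k$ is stationary with $\mathbb{E}[\rho_k]=1$ and $\mathbb{E}[\rho_k^2]=C<\infty$, hence the covariances $\operatorname{Cov}(\rho_k,\rho_l)$ are bounded in absolute value by $C-1$. This already yields the crude bound
\begin{equation}
\operatorname{Var}\big(T(j_1,j_2)(\omega)\big)
\leq (C-1)\Big(\sum_{k=j_1}^{j_2} K_k\Big)^2,
\end{equation}
which is not quite enough on its own, so the key extra input is that the covariances \emph{decay}: by the tail triviality \eqref{fdh:tail} (together with stationarity and ergodicity), $\operatorname{Cov}(\rho_0,\rho_n)\to 0$ as the hierarchical distance grows. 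Splitting $\operatorname{Var}(T) = \sum_{k} K_k^2\operatorname{Var}(\rho_k) + 2\sum_{k<l} K_k K_l \operatorname{Cov}(\rho_k,\rho_l)$ and using the decay of the covariance kernel, a standard Toeplitz/Cesàro argument shows $\operatorname{Var}\big(T(j_1,j_2)(\omega)\big) = o\Big(\big(\sum_{k=j_1}^{j_2} K_k\big)^2\Big)$ as $j_2 - j_1 \to \infty$.

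Finally I would compare this with $\mathbb{E}[S(j_1,j_2)(\omega)]$. Since $\mathbb{E}[S(j_1,j_2)] = \sum_{k=j_1}^{j_2}\big(K_k + d_k/c_k\big) \geq \sum_{k=j_1}^{j_2} K_k$ and also $\geq \sum_{k=j_1}^{j_2} d_k/c_k$, the preceding display gives $\operatorname{Var}(T) = o\big(\mathbb{E}[S]^2\big)$, and the weak law follows. The one place requiring care — and the main obstacle — is the passage from tail triviality to quantitative covariance decay: tail triviality per se gives only $\operatorname{Cov}(\rho_0, \mathcal{F}_L)\to 0$-type statements, so one must phrase the Toeplitz argument in terms of conditional expectations $\mathbb{E}[\rho_k \mid \mathcal{F}_L]$ rather than raw covariances, using that $\{\rho^\xi\}$ is stationary and that, for $k$ and $l$ far apart in the tree, $\rho_k$ and $\rho_l$ become asymptotically independent along the filtration $(\mathcal{F}_L)$. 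Because only \emph{finitely many} marginal orders $L$ are ever needed (the index set $I$ and the order of the moments are fixed before the limit $j\to\infty$), this suffices; and since the bound $\operatorname{Var}(T) = o(\mathbb{E}[S]^2)$ is all that enters, convergence in $\mathbb{P}$-probability — rather than $\mathbb{P}$-a.s. — is exactly what one obtains, consistent with the statement of the lemma and with Remark~\ref{rem:homclus}.
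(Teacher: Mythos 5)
Your proposal is correct and follows essentially the same route as the paper: split off the deterministic part $\sum_k d_k/c_k$, reduce to a Chebyshev bound on the weighted sum $\sum_k (\mu_k/c_k)[\rho_k(\omega)-1]$, and kill the variance via covariance decay of the stationary, tail-trivial field together with a Toeplitz argument using that the normalized weights sum to at most $1$ while the maximal weight tends to $0$. The paper phrases this with the normalized weights $\chi_k(j_1,j_2)=\frac{\mu_k/c_k}{\sum_{k}(\mu_k+d_k)/c_k}$ and simply asserts $\mathbb{C}\mathrm{ov}[\rho_k,\rho_l]=C_{|k-l|}\to 0$ (which, as you suspect, rests on tail triviality implying mixing for the stationary $L^2$ field), so the only cosmetic difference is your unnormalized bookkeeping.
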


\begin{proof}
Define
\begin{equation}
\chi_k(j_1,j_2) = \frac{\mu_k/c_k}{\sum_{k=j_1}^{j_2} (\mu_k+d_k)/c_k},
\qquad j_1 \leq k \leq j_2.
\end{equation}
Then
\begin{equation}
\frac{S(j_1,j_2)(\omega)}{\mathbb{E}[S(j_1,j_2)(\omega)]} - 1 =  \sum_{k=j_1}^{j_2} 
\chi_k(j_1,j_2) [\rho_k(\omega)-1].
\end{equation}
With the help of Chebyshev's inequality we see that it suffices to show that
\begin{equation}
\lim_{j\to\infty} \sum_{k=j_1}^{j_2} \sum_{l=j_1}^{j_2} \chi_k(j_1,j_2) \chi_l(j_1,j_2)\, 
\mathbb{C}\mathrm{ov}[\rho_k(\omega),\rho_l(\omega)] = 0.
\end{equation}
We have $\mathbb{C}\mathrm{ov}[\rho_k(\omega),\rho_l(\omega)] = C_{|k-l|}$ with 
$\lim_{m\to\infty} C_m = 0$. Since, by our assumptions on $(c_k)_{k\in\N_0}$ and 
$(\mu_k)_{k\in\N_0}$, we have
\begin{equation}
\lim_{j_2-j_1\to\infty} \max_{j_1 \leq k \leq j_2} \chi_k(j_1,j_2) = 0, \qquad
\sum_{j_1 \leq k \leq j_2} \chi_k(j_1,j_2) \leq 1,
\end{equation} 
the claim follows. 
\end{proof}

\begin{remark}
\label{rem:homclus}
{\rm The role of Lemma~\ref{lem:wlln} is to show that the same clustering behaviour occurs 
in the random environment as in the homogeneous environment. We are only able to prove 
convergence in $\mathbb{P}$-probability and not $\mathbb{P}$-a.s. In the prefactor in the 
right-hand side of \eqref{e2858} weighted averages over $j_1,j_2$-dependent sliding windows 
of the random environment appear, which would need to be shown to converge $\mathbb{P}$-a.s. 
It is unclear how to do this, even for an i.i.d.\ random environment.}
\end{remark}

Lemma~\ref{lem:wlln} implies that the term between square brackets in \eqref{e2858} scales like
\begin{equation}
\label{FdHscal1}
\Delta(j_1,j_2) = \sum_{k=j_1}^{j_2} \frac{d_{k+1}}{c_k}
\exp\left[-\sum_{l=k+1}^{j_2} \frac{\mu_l+d_l}{c_l}\right]
\qquad \text{ in $\mathbb{P}$-probability as $j_2-j_1\to\infty$},
\end{equation}
where we use that $\lim_{l\to\infty} (\mu_l+d_l)/c_l = 0$ in all cases of interest. In the remainder of 
the proof, we pick $j_1=k_{\alpha_1}(j)$ and $j_2=k_{\alpha_2}(j)$ with $\alpha_2<\alpha_1$, with 
$k_\alpha(j)$ as in Definition~\ref{def:clusuniv}, and compute the limit of \eqref{FdHscal1} as 
$j\to\infty$. We omit writing $\lfloor\cdot\rfloor$ at places where labels are obviously integer. We 
determine $k_\alpha$ and identify $F$, $L$  (recall the discussion leading up to \eqref{delF}) for the 
different cases, in the order (c), (C2), (d), (C3). Recall that $K_k= \frac{\mu_k}{c_k}$ and $\bar{K}_k
= \frac{\bar{\mu}_k}{\bar{c}_k}$.

\medskip\noindent
{\bf Case (c).} 
Pick $k_\alpha(j)= j+1-\alpha h(j)$ with $h(j)=1/\sqrt{K_j}$, and insert $d_k \sim \sqrt{c_k\mu_k} 
= c_k \sqrt{K_k}$ and $d_{k+1} \sim d_k$, to obtain that \eqref{FdHscal1} scales like
\begin{equation}
\label{FdHscal2}
\Delta(j) = \sum_{k= j+1-\alpha_1/\sqrt{K_j}}^{j+1-\alpha_2/\sqrt{K_j}} 
\sqrt{K_k}\,\,\exp\left[-\sum_{l=k+1}^{j+1-\alpha_2/\sqrt{K_j}} \big(K_l + \sqrt{K_l}\big) \right].
\end{equation}
Putting $x = (j+1-k)\sqrt{K_j}$, and using that $\lim_{k\to\infty} K_k = 0$, $\lim_{k\to\infty} k^2K_k 
= \infty$ and $K_k \sim K_l \sim K_j$ uniformly in $k,l$ in both sums, we get
\begin{equation}
\lim_{j\to\infty} \Delta(j) = \int_{\alpha_2}^{\alpha_1} \dd x\,\,\exp[-(x-\alpha_2)]
= 1 - \exp[-(\alpha_1-\alpha_2)]. 
\end{equation}
Pick $\alpha = L(s) = s$. Then $\Delta F \equiv 1$. Since $s=L^{-1}(\alpha) = \alpha$, this proves 
the claim.
  
\medskip\noindent
{\bf Case (C2)[subcase $\lim_{k\to\infty} k \bar{K}_k = \infty$].} 
Pick $k_\alpha(j)=j+1-\alpha h(j)$ with $h(j)=1/\bar{K}_j$, and insert $d_k \sim \mu_k/(\mu-1)
= \bar{K}_k c_k/(\mu-1)$ and $d_{k+1} \sim \mu d_k$, to obtain that \eqref{FdHscal1} scales like
\begin{equation}
\label{FdHscal3}
\Delta(j) = \frac{\mu}{\mu-1} \sum_{k=j+1-\alpha_1/\bar{K}_j}^{j+1-\alpha_2/\bar{K}_j}
\bar{K}_k\,\,\exp\left[- \frac{\mu}{\mu-1} \sum_{l=k+1}^{j+1-\alpha_2/\bar{K}_j} \bar{K}_l \right].
\end{equation}
Putting $x = (j+1-k)\bar{K}_j$, and using that $\lim_{k\to\infty} \bar{K}_k = 0$, $\lim_{k\to\infty} 
k\bar{K}_k = \infty$ and $\bar{K}_k \sim \bar{K}_l \sim \bar{K}_j$ uniformly in $k,l$ in both sums, 
we get
\begin{equation}
\label{eq:FdHscal3.5}
\lim_{j\to\infty} \Delta(j) = \frac{\mu}{\mu-1} \int_{\alpha_2}^{\alpha_1} \dd x\,\,
\exp\left[- \frac{\mu}{\mu-1}(x-\alpha_2)\right] 
= 1 - \exp\left[- \frac{\mu}{\mu-1}(\alpha_1-\alpha_2)\right]. 
\end{equation}
Pick $\alpha = L(s) = \frac{\mu-1}{\mu}s$. Then $\Delta F \equiv 1$. Since $s=L^{-1}(\alpha) 
= \frac{\mu}{\mu-1}\alpha$, this proves the claim.

\medskip\noindent
{\bf Case (d).} 
Pick $k_\alpha(j)= (1-\alpha)(j+1)$, and insert $d_k \sim M/\sigma_k$, $\sigma_k c_k \sim k/(1-a)$
and $d_{k+1} \sim d_k$, to obtain that \eqref{FdHscal1} scales like
\begin{equation}
\label{FdHscal4}
\Delta(j) = M(1-a) \sum_{k= (1-\alpha_1)(j+1)}^{(1-\alpha_2)(j+1)} 
\frac{1}{k}\,\,\exp\left[-\sum_{l=k+1}^{(1-\alpha_2)(j+1)} \left(K_l + \frac{M(1-a)}{l}\right) \right].
\end{equation}
Putting $x=(j+1-k)/(j+1)$, and using that $\lim_{k\to\infty} k^2K_k = 0$, we get
\begin{equation}
\begin{aligned}
\lim_{j\to\infty} \Delta(j) &= M(1-a) \int_{\alpha_2}^{\alpha_1}
\frac{\dd x}{1-x}\,\,\exp\left[-M(1-a) \int_{\alpha_2}^{x} \frac{\dd y}{1-y} \right]\\
&= M(1-a)\, (1-\alpha_2)^{-M(1-a)} 
\int_{\alpha_2}^{\alpha_1} \dd x\,(1-x)^{-1+M(1-a)}
= 1 - \left(\frac{1-\alpha_1}{1-\alpha_2}\right)^{M(1-a)}.
\end{aligned}
\end{equation}
Pick $\alpha = L(s) = 1-e^{-s/R}$ with $R=M(1-a)$. Then $\Delta F \equiv 1$. Since $s= L^{-1}(\alpha) 
= \log(1/(1-\alpha)^R)$ we get the claim. 

\medskip\noindent
{\bf Case (C2)[subcase $\lim_{k\to\infty} k \bar{K}_k = \bar{N}$].} 
This is the same as case (d) with $M(1-a)$ replaced by $\bar{N}\frac{\mu}{\mu-1}$.

\medskip\noindent
{\bf Case (C3)[second subcase].} 
This is the same as case (d) with $M$ replaced by 1.
\end{proof}

\addcontentsline{toc}{section}{References}
\bigskip

\bibliography{ghkRE}
\bibliographystyle{alpha}

\end{document}